\documentclass[11pt,twoside,english,reqno,a4paper]{article}

\usepackage{a4wide}
\usepackage[utf8]{inputenc}
\usepackage{amsmath}
\usepackage{amsthm}
\usepackage{amssymb}
\usepackage[numbers]{natbib}
\usepackage{mathrsfs}
\usepackage{lmodern}
\usepackage[T1]{fontenc}
\usepackage[english]{babel}
\usepackage{url}
\usepackage{mathtools,amsmath}
\usepackage{hyperref}
\usepackage{color}
\usepackage{epic}
\usepackage{pict2e}
\usepackage{paralist}
\usepackage{enumerate}
\usepackage{enumitem}
\usepackage{float}
\usepackage{mathtools}
\usepackage{blindtext}
\usepackage{framed}
\usepackage{advdate}
\usepackage{relsize}

\usepackage{titletoc}

\newcommand{\footremember}[2]{%
	\footnote{#2}
	\newcounter{#1}
	\setcounter{#1}{\value{footnote}}%
}

\DeclareMathOperator{\SO}{SO}
\DeclareMathOperator{\GL}{GL}
\DeclareMathOperator{\Orm}{O}
\DeclareMathOperator{\tr}{tr}

\DeclareMathOperator{\Hom}{{Hom}}
\DeclareMathOperator{\Ind}{Ind}
\DeclareMathOperator{\diag}{diag}

\DeclareMathOperator{\End}{End}
\DeclareMathOperator{\pr}{pr}

\newcommand{\nfrak}{\mathfrak{n}}
\newcommand{\nbar}{\bar{\mathfrak{n}}}
\newcommand{\Nbar}{\overline{N}}

\newcommand{\R}{\mathbb{R}}
\newcommand{\C}{\mathbb{C}}
\newcommand{\g}{\mathfrak{g}}

\newcommand{\Z}{\mathbb{Z}}

\renewcommand{\Im}{\operatorname{Im}}
\renewcommand{\Re}{\operatorname{Re}}

\renewcommand{\setminus}{-}
\DeclareMathOperator{\sgn}{sgn}
\DeclareMathOperator{\Res}{Res}

\newcommand{\LocalPoles}{\;\backslash\kern-0.8em{\backslash} \;}
\newcommand{\GlobalPoles}{\; /\kern-0.8em{/} \; }

\DeclarePairedDelimiter\abs{\lvert}{\rvert}
\DeclarePairedDelimiter\norm{\lVert}{\rVert}

\newtheoremstyle{normal}
{10pt}
{10pt}
{\normalfont}
{}
{\bfseries}
{}
{0.8em}
{\bfseries{\thmname{#1}\thmnumber{ #2}.\thmnote{ \hspace{0.5em}(#3)\newline}}}
 
\newtheoremstyle{kursiv}
{10pt}
{10pt}
{\itshape}
{}
{\bfseries}
{}
{0.8em}
{\bfseries{\thmname{#1}\thmnumber{ #2}.\thmnote{ \hspace{0.5em}(#3)\newline}}}

\theoremstyle{plain}
\newtheorem{theorem}{Theorem}[section]
\newtheorem{lemma}[theorem]{Lemma}
\newtheorem{corollary}[theorem]{Corollary}
\newtheorem{prop}[theorem]{Proposition}
\newtheorem{theoremalph}{Theorem}

\theoremstyle{definition}

\newtheorem{remark}[theorem]{Remark}

\numberwithin{equation}{section}

\author{Clemens Weiske\footremember{asd}{Chalmers University of Technology, \texttt{weiske@chalmers.se}}}
\title{Branching of unitary \texorpdfstring{$\Orm(1,n+1)$}{O(1,n+1)}-representations with non-trivial \texorpdfstring{$(\mathfrak{g},K)$}{(g,K)}-cohomology}
\date{ }
\begin{document}
	\maketitle 

		\renewcommand{\thefootnote}{\fnsymbol{footnote}} 
	\footnotetext{\emph{2020 Mathematics Subject Classification} Primary: 22E45, Secondary: 22E46}  
	\renewcommand{\thefootnote}{\arabic{footnote}} 
\abstract{Let $G={\rm O}(1,n+1)$ with maximal compact subgroup $K$ and let	$\Pi$ be a unitary irreducible representation of $G$ with non-trivial $(\mathfrak{g},K)$-cohomology. Then $\Pi$ occurs inside a principal series representation of $G$, induced from the ${\rm O}(n)$-representation $\bigwedge\nolimits^p(\C^n)$ and characters of a minimal parabolic subgroup of $G$ at the limit of the complementary series. Considering the subgroup $G'={\rm O}(1,n)$ of $G$ with maximal compact subgroup $K'$, we prove branching laws and explicit Plancherel formulas for the restrictions to $G'$ of all unitary representations occurring in such principal series, including the complementary series, all unitary $G$-representations with non-trivial $(\mathfrak{g},K)$-cohomology and further relative discrete series representations in the cases $p=0,n$. Discrete spectra are constructed explicitly as residues of $G'$-intertwining operators which resemble the Fourier transforms on vector bundles over the Riemannian symmetric space $G'/K'$.}

\section*{Introduction}
Unitary representations of reductive Lie groups with non-trivial $(\mathfrak{g},K)$-cohomology appear in several branches of mathematics, for example in the theory of locally symmetric spaces, where for a Lie-group $G$ with finite center, maximal compact subgroup $K$ and discrete cocompact subgroup $\Gamma$, by the Matsushima--Murakami formula (see \cite[VII, Theorem~3.2]{borel_wallach_1980})
$$H^*(\Gamma\backslash G / K,\C)=\bigoplus_{\pi \in \widehat{G}}m(\Gamma,\pi)H^*(\mathfrak{g},K;\pi_K)$$
the cohomology of $\Gamma\backslash G / K$ is given by $(\mathfrak{g},K)$-cohomologies of unitary representations of $G$ with multiplicities, which are essentially the dimensions of spaces of automorphic forms on $\Gamma\backslash G /K$. All representations with non-trivial $(\mathfrak{g},K)$-cohomology are constructed and their cohomologies calculated in \cite{vogan_zuck_1984}. The unitary ones are well known for the indefinite orthogonal group $\Orm(1,n+1)$ and classified for example for $\GL(n,\R)$ (see \cite{speh_1983}). In our case of interest $\Orm(1,n+1)$ the unitary cohomological representations occur as limits of complementary series representations in the Fell topology on the unitary dual and on the automorphic dual in the sense of Burger--Sarnak \cite{burger_sarnak_1991}. Restrictions of these representations are of particular importance in the latter setting, since by \cite{burger_sarnak_1991}, the restriction of automorphic representations to certain subgroups is again automorphic for the subgroup. In \cite{speh_venkataramana_2011} it is proven that the restrictions of the cohomological representations of $\Orm(1,n+1)$ to $\Orm(1,n)$ contain a certain cohomological representation of the subgroup discretely. The main result of this article is the full branching law for the cohomological representations of $\Orm(1,n+1)$ restricted to $\Orm(1,n)$, extending the result of \cite{speh_venkataramana_2011} to a complete decomposition.

For an irreducible unitary representation $\pi$ of a reductive Lie group $G$ which is typically infinite dimensional, the restriction to a subgroup $G'$ decomposes into a direct integral
$$\pi|_{G'} \simeq \int_{\widehat{G}'}^\oplus m(\pi,\tau)\tau\, d\mu_\pi(\tau)$$
with a certain measure $d\mu_\pi$ on the unitary dual $\widehat{G}'$ of $G'$ and possibly infinite multiplicities $m(\pi,\tau)$. Only in special cases the support of the measure is discrete and in general it might contain a continuous and a discrete part. Many special cases have been studied recently using analytic methods (e.g. \cite{Kob19}, \cite{OS19}, \cite{SZ16}, \cite{MO15}). 

For pointwise evaluation of the continuous spectrum of a unitary branching law in terms of $G'$-intertwining operators, it is necessary to restrict ourselves to the smooth vectors of  unitary representations, since the existence of  continuous $G'$-intertwining operators in the unitary case already implies the images to be in the discrete spectrum, while for the smooth vectors, intertwiners exist for the whole spectrum almost everywhere (see \cite{frahm_2020}).
In our case the cohomological representations of $\Orm(1,n+1)$ can be realized as quotients of principal series representations induced from the $\Orm(n)$-representation $\bigwedge\nolimits^p(\C^n)$ at the limit of the complementary series. To obtain the direct integral decomposition of the cohomological representations, we prove branching laws for the unitary principal series and use an analytic continuation procedure to extend the result onto the complementary series and towards the cohomological representations. In particular we obtain branching laws for the complementary series and also for all other unitarizable quotients which occur within the principal series in question. More precisely we collect discrete components in the decomposition as residues of $G'$-intertwining operators between smooth vectors, so called \emph{symmetry breaking operators} by Kobayashi \cite{kobayashi_2015} and we make use of the detailed classification and study of these operators in the relevant case by Kobayashi--Speh \cite{kobayashi_speh_2018}.

\subsection*{Main results}
Let $G=\Orm(1,n+1)$, $n>1$ and let $P=MAN\subseteq G$ be a minimal parabolic subgroup. Then $M\cong\Orm(1)\times \Orm(n)$. Consider the representation
$$\left(\alpha\otimes \bigwedge\nolimits^p(\C^n)\right) \otimes e^\lambda \otimes \mathbf{1}$$
of $MAN$ on the vector space $V_{p,\lambda}^\pm$ where we use the superscript $+$ if $\alpha$ is the trivial irreducible $\Orm(1)$-representation and $-$ if it is the non-trivial one and $\lambda \in \mathfrak{a}_\C^*$ which we identify by $\C$ by mapping the half sum of all positive roots $\rho$ to $\frac{n}{2}$.
Let $\pi_{p,\lambda}^\pm$ be the principal series representation of $G$ on the smooth sections of the homogeneous bundle 
$$G\times_P V_{p,\lambda+\rho}^\pm\to G/P$$
over the real flag variety $G/P$. Our normalization is chosen such that $\pi_{p,\lambda}^\pm$ is unitary for $\lambda \in i\R$ and such that $\pi_{p,\lambda}^\pm$ contains a submodule $\Pi_{p,\pm}$ whose underlying $(\mathfrak{g},K)$-module has non-trivial $(\mathfrak{g},K)$-cohomology for $\lambda =p-\rho$.

Let $G'=\Orm(1,n)$ embedded in $G$ such that $P'=G'\cap P$ is a minimal parabolic subgroup of $G'$. 
Similarly we consider the $P'=M'AN'$ representation
$$\left(\alpha\otimes \bigwedge\nolimits^q(\C^{n-1})\right) \otimes e^\nu \otimes \mathbf{1}$$
on the vector space $W_{q,\nu}^\pm$ and denote by $\tau_{q,\nu}^\pm$ the principal series representation which is given by the smooth sections of the bundle
$$G'\times_{P'} W_{q,\nu+\rho'}^\pm\to G'/P',$$
where $\rho'$ is the obvious and under the identification above equal to $\frac{n-1}{2}$. Our normalization is again such that the unitary principal series is given on the imaginary axis and such that $\tau_{q,\nu}^\pm$ contains a cohomological representation $\Pi'_{q,\pm}$ as a submodule for $\nu=q-\rho'$.

For  $G$ and $G'$ we denote the unitary closures of unitarizable representations $\pi$ in the following by $\hat{\pi}$.
For the unitary principal series we prove the following branching laws. For the uniform formulation for all $p=0,\dots,n$ we set $\hat{\tau}_{q,\nu}^\pm=\{0\}$ for $q=-1,n$.
\begin{theoremalph}[Branching laws for the unitary principal series (see Lemma~\ref{C:lemma:branching_law_unitary_ps})]
	For $\lambda \in i\R $  and $p\neq \frac{n}{2}$ we have
	$$\hat{\pi}_{p,\lambda}^\pm|_{G'}\simeq \bigoplus_{\alpha=+,-}\bigoplus_{q=p-1,p}\int^\oplus_{i\R_+} \hat{\tau}_{q,\nu}^\alpha \, d\nu.$$

	For $\lambda \in i\R  $  and $p=\frac{n}{2}$ we have
	$$\hat{\pi}_{p,\lambda}^\pm|_{G'}\simeq \widehat{\Pi}'_{\frac{n}{2},+} \oplus \widehat{\Pi}'_{\frac{n}{2},-} \oplus \bigoplus_{\alpha=+,-}\bigoplus_{q=p-1,p}\int^\oplus_{i\R_+} \hat{\tau}_{q,\nu}^\alpha \, d\nu .$$
\end{theoremalph}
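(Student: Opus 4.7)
The plan is to construct an explicit $G'$-equivariant map from $\pi_{p,\lambda}^\pm$ into a direct integral of $\tau_{q,\nu}^\alpha$'s by means of the meromorphic families of symmetry breaking operators of Kobayashi--Speh \cite{kobayashi_speh_2018}, and to upgrade it to an isometry by a Plancherel-type computation on a vector bundle over the Riemannian symmetric space $G'/K'$.

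\textbf{Geometric setup and branching of $M$-types.} First, I identify $G/P$ with the sphere $S^n$, on which $G'=\Orm(1,n)$ acts with a closed orbit $S^{n-1}\cong G'/P'$ (the equator) and two open orbits, each equivariantly diffeomorphic to $G'/K'$. Restricting a smooth section of $\pi_{p,\lambda}^\pm$ to one open hemisphere identifies a dense $G'$-submodule with sections of the homogeneous vector bundle over $G'/K'$ with fibre $\alpha\otimes\bigwedge^p(\C^n)$. The classical branching
$$\bigwedge\nolimits^p(\C^n)\big|_{\Orm(n-1)} \;\cong\; \bigwedge\nolimits^p(\C^{n-1}) \oplus \bigwedge\nolimits^{p-1}(\C^{n-1}),$$
combined with the embedding $M'=\Orm(1)\times\Orm(n-1)\hookrightarrow M$, explains why exactly the pairs $(q,\alpha)\in\{p-1,p\}\times\{+,-\}$ appear as target principal series.

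\textbf{Construction of the spectral map and isometry.} For each such pair I take the meromorphic family of SBOs $A_\nu^{q,\alpha}\colon\pi_{p,\lambda}^\pm\to\tau_{q,\nu}^\alpha$ from \cite{kobayashi_speh_2018}, renormalize so that it is holomorphic on $\nu\in i\R$, and define
$$\Phi\colon\pi_{p,\lambda}^\pm\longrightarrow\bigoplus_{\alpha=\pm}\bigoplus_{q=p-1,p}\int_{i\R_+}^{\oplus}\tau_{q,\nu}^\alpha\,d\nu,\qquad f\longmapsto\bigl(\nu\mapsto A_\nu^{q,\alpha}f\bigr).$$
The reduction of the integration domain from $i\R$ to $i\R_+$ is forced by the Knapp--Stein functional equation $T_\nu^{q,\alpha}\circ A_\nu^{q,\alpha}=A_{-\nu}^{q,\alpha}$, identifying $\tau_{q,\nu}^\alpha$ with $\tau_{q,-\nu}^\alpha$. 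The isometry property then reduces, via the open-orbit identification above, to a vector-valued Helgason--Fourier inversion on $G'/K'$; the Plancherel density is determined by evaluating $A_{-\nu}^{q,\alpha}\circ (A_\nu^{q,\alpha})^{\ast}$ on a distinguished $K'$-type, producing an explicit Harish-Chandra $c$-function.

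\textbf{Discrete residues in the middle degree.} For $p\neq n/2$ the relevant bundle over $G'/K'$ has no $L^2$-harmonic sections and the spectrum is purely continuous. When $p=n/2$, however, the family $A_\nu^{n/2,\alpha}$ acquires a simple pole at $\nu=n/2-\rho'=\tfrac12$ for each sign $\alpha=\pm$; the residue is a $G'$-equivariant projection onto the cohomological submodule $\Pi'_{n/2,\alpha}\subset\tau_{n/2,1/2}^\alpha$, and the two residues contribute exactly the discrete summands $\widehat{\Pi}'_{n/2,+}\oplus\widehat{\Pi}'_{n/2,-}$.

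The main obstacle is to fix the holomorphic normalization of $A_\nu^{q,\alpha}$ carefully enough that, simultaneously, the resulting Plancherel density matches the $c$-function dictated by Helgason--Fourier inversion on $G'/K'$, and, at $p=n/2$, the residues at $\nu=1/2$ project precisely onto the cohomological unitary submodules $\Pi'_{n/2,\pm}$ and not onto any other constituent of $\tau_{n/2,1/2}^\pm$. Once this delicate calibration is in place, equivariance, continuity, and the structure of the direct integral are automatic consequences of the construction.
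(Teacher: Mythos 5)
Your overall strategy---restrict to the open $G'$-orbit in $G/P$, apply a vector-bundle Plancherel formula over a homogeneous space, and identify the resulting Fourier transform with symmetry breaking operators---is exactly the paper's strategy (Lemmas~\ref{C:lemma:H-Stab}, \ref{C:lemma:L^2_condition}, Theorem~\ref{C:theorem:plancherel_O(1,n)}, Theorem~\ref{C:theorem:coordinate_change}, Lemma~\ref{C:lemma:branching_law_unitary_ps}). However there are two substantive errors in your setup that would derail the argument as written.

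First, your geometric description of the open region is wrong for the disconnected group $G' = \Orm(1,n)$. The complement of the equator is \emph{not} two $G'$-orbits each isomorphic to $G'/K'$; it is a \emph{single} $G'$-orbit isomorphic to $G'/\tilde K'$, where $\tilde K' = \Orm(n)$ is an index-two subgroup of $K'\cong \Orm(1)\times\Orm(n)$ (Lemma~\ref{C:lemma:H-Stab}). In particular, restricting a section to one hemisphere gives only a $G'_0$-submodule, not a $G'$-submodule, so your map $\Phi$ is not defined as you describe it. This is not merely cosmetic: the $\Orm(1)\cong K'/\tilde K'$ fibration over $G'/K'$ produces the decomposition $\Phi = \Phi_+ + \Phi_-$, which is precisely what generates both signs $\alpha=\pm$ in $\hat\tau^\alpha_{q,\nu}$ and both discrete summands $\widehat\Pi'_{n/2,\pm}$ at the end. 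If you only work on one hemisphere you either lose $G'$-equivariance or lose track of the sign bookkeeping; the paper works on the full orbit $\mathcal O_A\cong G'/\tilde K'$ and then decomposes the bundle, which is the honest way to do this.

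Second, your mechanism for the discrete spectrum when $p=n/2$ is misattributed. The normalized SBO family $\tilde A^{\pm}_{(p,\lambda),(q,\nu)}$ is holomorphic, and the unnormalized $A^\pm_{(p,\lambda),(q,\nu)}$ has no pole at $\lambda\in i\R$, $\nu=\tfrac12$ (the relevant Gamma factors are $\Gamma$ evaluated at points with non-real or positive argument). The discrete summands $\widehat\Pi'_{n/2,\pm}$ arise from a pole of the Plancherel \emph{density} $1/(c(p,\nu)c(p,-\nu))$ caused by the zero of the Harish-Chandra $c$-function at $\nu = p-\rho' = \tfrac12$ (Proposition~\ref{C:prop:supp_plancherel_measure}, Theorem~\ref{C:theorem:plancherel_O(1,n)}); this is the classical vector-bundle discrete-series mechanism in Camporesi's formula and has nothing to do with a pole of the symmetry breaking operator. (Residues of SBO families \emph{do} appear later in the paper, for the complementary series and unitarizable quotients, but not at this step.) Finally, you state without argument that the composite of the restriction map and the Helgason--Fourier transform "is" the SBO family; that identification is the nontrivial computational heart of the proof (Theorem~\ref{C:theorem:coordinate_change}) and should not be treated as automatic.
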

If $p \neq \rho=\frac{n}{2}$, there is a complementary series. More precisely $\pi_{p,\lambda}^\pm$ is a complementary series representation if and only if $\lambda \in (-\abs{\rho-p}, \abs{\rho-p})$, for which we also prove unitary branching laws and where complementary series of $G'$ occur discretely. We formulate the result only for the negative half of the complementary series. The result for the positive parameters follows by duality.

\begin{theoremalph}[see Theorem~\ref{C:theorem:branching_complementary series}]
For $\lambda\in (-\abs{\rho-p},0)$ we have
$$\hat{\pi}_{p,\lambda}^\pm|_{G'}\simeq \bigoplus_{\alpha=+,-}\bigoplus_{q=p-1,p} \left(\int^\oplus_{i\R_+} \hat{\tau}_{q,\nu}^\alpha\, d\nu \oplus \bigoplus_{k \in [0, \frac{-\lambda-1+(\alpha \frac{1}{2})}{2})\cap \Z} \hat{\tau}_{q,\lambda+1-(\alpha\frac{1}{2})+2k}^{\pm \alpha}\right).$$
\end{theoremalph}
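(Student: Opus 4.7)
The plan is to start from the Plancherel formula of Theorem~A on the imaginary axis and analytically continue it in $\lambda$ into the complementary strip $(-|\rho-p|,0)$, collecting residues of a meromorphic family of symmetry breaking operators as discrete summands.

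First I would use the classification of Kobayashi--Speh \cite{kobayashi_speh_2018} to construct, for each pair $(q,\alpha) \in \{p-1,p\}\times\{+,-\}$, a meromorphic family
\begin{equation*}
A_{q,\alpha}(\lambda,\nu) \colon \pi_{p,\lambda}^\pm \longrightarrow \tau_{q,\nu}^\alpha
\end{equation*}
of $G'$-intertwining operators between smooth vectors. These operators are normalised so as to be generically nonzero and to realise, for $\nu \in i\R_+$ and $\lambda \in i\R$, the decomposition of Theorem~A as a Parseval identity
\begin{equation*}
\|f\|^2 \;=\; \sum_{q,\alpha}\int_{i\R_+} c(\lambda,\nu)\,\|A_{q,\alpha}(\lambda,\nu) f\|^2\,d\nu
\end{equation*}
for smooth $f$, with an explicit positive density $c(\lambda,\nu)$. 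The key structural input is that the poles of $A_{q,\alpha}(\lambda,\cdot)$ in $\nu$ are simple and located on the arithmetic progression $\nu = \lambda + 1 - (\alpha\tfrac12) + 2k$ with $k\in\Z_{\geq 0}$, and that the residue at each such pole lands in the principal series $\tau^{\pm\alpha}_{q,\nu}$ with the parity flip $\pm\alpha$ read off from the Kobayashi--Speh normalisation.

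Next I would analytically continue both sides in $\lambda$ from $i\R$ into $(-|\rho-p|,0)$. The left-hand Hermitian form extends as a positive-definite pairing because $\pi_{p,\lambda}^\pm$ remains unitary throughout this range. On the right-hand side, precisely those poles with $k \in [0, \frac{-\lambda-1+\alpha/2}{2})\cap \Z$ cross the contour $i\R_+$ from left to right as $\Re \lambda$ decreases. Pushing the contour back to $i\R_+$ and collecting the residues produces, for each such $k$, a nonzero $G'$-equivariant operator from $\pi_{p,\lambda}^\pm$ to $\tau_{q,\mu}^{\pm\alpha}$ with $\mu = \lambda + 1 - \alpha/2 + 2k$, while the continuous part over $i\R_+$ remains unchanged in form.

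The main obstacle is to justify the continuation rigorously and to verify that the residues furnish genuine unitary $G'$-embeddings into the claimed complementary series of $G'$. Meromorphy and uniform $K$-finite estimates for $A_{q,\alpha}(\lambda,\nu)$ reduce on each $K\cap G'$-type to explicit hypergeometric integrals, following the techniques of \cite{kobayashi_speh_2018}; this lets one push the continuation through termwise. Unitarity of the residue representations comes for free from the positivity of the continued Parseval identity: since the left-hand norm is positive-definite and the surviving continuous integral is itself positive, each residue must contribute a positive term, which forces the parameter $\mu = \lambda + 1 - \alpha/2 + 2k$ (lying in $(0,|\rho'-q|)$ by the range of $k$) into the complementary series of $G'$ and upgrades the residue to a unitary embedding into $\hat{\tau}_{q,\mu}^{\pm\alpha}$. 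Assembling the continuous integral with the collected discrete residues yields the stated decomposition.
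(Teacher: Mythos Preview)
Your strategy is essentially the paper's: start from the Parseval identity on the unitary axis, rewrite it in terms of the holomorphic families $\tilde{A}^\alpha_{(p,\lambda),(q,\nu)}$ of Kobayashi--Speh, and analytically continue in $\lambda$ by contour-shifting in $\nu$, collecting residues as discrete complementary-series summands.

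Two points deserve more care. First, on $i\R$ the left-hand side is $\|f\|^2_{L^2(K)}$, which in the compact picture does not depend on $\lambda$ at all, so ``analytically continuing the left-hand side'' has no content as stated. The paper bridges this by inserting the Knapp--Stein operator: for $\Re\lambda\in(-\tfrac12,\tfrac12)$ both $\Phi f$ and $\Phi\circ T_{p,\lambda}\bar f$ lie in $L^2(G'/K',\sigma_p)$, so one can pair them and use the functional equations $\tilde{A}^\alpha_{(p,-\lambda),(q,\nu)}\circ T_{p,\lambda}=t(p,q,\lambda)\tilde{A}^\alpha_{(p,\lambda),(q,\nu)}$ to obtain an identity whose left side is $(f,T_{p,\lambda}\bar f)_K$, genuinely meromorphic in $\lambda$. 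Only after this rewriting does the contour argument make sense; your proposal should make this step explicit.

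Second, a minor misattribution: the normalised operators $\tilde{A}^\alpha_{(p,\lambda),(q,\nu)}$ are holomorphic in $(\lambda,\nu)$; the simple poles you cross come from the zeros of the Plancherel density $c(p,\lambda,\nu)^\alpha$, not from the operators themselves. The residues are then identified with the renormalised operators $C^\alpha_{(p,\lambda),(q,\nu)}$, and their positivity (hence the unitarity of the discrete pieces) is checked in the paper by an explicit sign computation (Lemma~\ref{C:lemma:scalars_positive}) rather than inferred indirectly from the Parseval identity.
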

Moreover we prove unitary branching laws for the unitarizable quotients $\Pi_{p,\pm}$ whose underlying $(\mathfrak{g},K)$-modules have non-trivial $(\mathfrak{g},K)$-cohomology, sitting as quotients at the limit of the complementary series. Here complementary series as well as cohomological representations occur in the discrete spectrum.

\begin{theoremalph}[see Theorem~\ref{C:theorem:branching_infinitesimal_char_rho}]
	\begin{enumerate}[label=(\roman{*})]
		\item For the one dimensional quotients we have
		$$\widehat{\Pi}_{0,\pm}|_{G'} \simeq  \widehat{\Pi}'_{0,\pm}, \qquad \widehat{\Pi}_{n+1,\pm}|_{G'}\simeq \widehat{\Pi}'_{n,\pm}.$$
		\item For $0<p\leq \frac{n}{2}$ we have
		$$\widehat{\Pi}_{p,\pm}|_{G'} \simeq \widehat{\Pi}'_{p,\pm}\oplus \bigoplus_{k\in (0,\rho'-p+1)\cap \Z} \hat{\tau}_{p-1,p-1-\rho'+k}^{\mp(-1)^k}\oplus \bigoplus_{\alpha=+,-} \int_{i\R_+}^\oplus \hat{\tau}_{p-1,\nu}^\alpha\, d\nu .
		$$

		\item For $n$ odd and $p= \frac{n+1}{2}$ we have
		$$\widehat{\Pi}_{\frac{n+1}{2},\pm}|_{G'} \simeq \bigoplus_{\alpha=+,-} \int_{i\R_+}^\oplus \hat{\tau}_{\frac{n-1}{2},\nu}^\alpha\, d\nu .
		$$
		\item For $\frac{n+1}{2}< p\leq n $ we have
		$$\widehat{\Pi}_{p,\pm}|_{G'} \simeq \widehat{\Pi}'_{p-1,\pm}\oplus \bigoplus_{k\in (0,p-1-\rho')\cap \Z} \hat{\tau}_{p-1,\rho'-p+1+k}^{\pm(-1)^k}\oplus \bigoplus_{\alpha=+,-} \int_{i\R_+}^\oplus \hat{\tau}_{p-1,\nu}^\alpha\, d\nu .
		$$
	\end{enumerate}
\end{theoremalph}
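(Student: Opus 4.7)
The strategy is to deduce Theorem C from Theorem B by analytic continuation of the branching law as $\lambda$ tends to the boundary point $\lambda = p-\rho$ of the complementary series, where $\Pi_{p,\pm}$ is realized as the unique unitarizable submodule of $\pi_{p,p-\rho}^\pm$. By duality one may restrict attention to $p \leq \rho$, so in cases (ii)-(iii) one approaches $\lambda = p-\rho \leq 0$ from the right along the complementary series interval $(-|\rho-p|,0)$; case (iv) follows by applying duality to case (ii). Case (i) is immediate: $\Pi_{0,\pm}$ and $\Pi_{n+1,\pm}$ are the one-dimensional characters (trivial or sign) of $G$, whose restrictions to $G'$ are the corresponding characters of $G'$, identifiable with $\Pi'_{0,\pm}$ and $\Pi'_{n,\pm}$ respectively.

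For the main cases (ii)-(iv), I would use the fact that all discrete components in the branching law of Theorem B are realized as images of residues of a meromorphic family of Kobayashi--Speh symmetry breaking operators $A_{q,\nu}^\alpha(\lambda) \colon \pi_{p,\lambda}^\pm \to \tau_{q,\nu}^\alpha$. At $\lambda = p-\rho$ two things happen simultaneously: the principal series $\pi_{p,\lambda}^\pm$ becomes reducible with $\Pi_{p,\pm}$ as its socle, and the individual SBOs acquire specific poles or zeros. Using the explicit integral kernels and normalizations from \cite{kobayashi_speh_2018}, I would determine for each triple $(q,\nu,\alpha)$ whether $A_{q,\nu}^\alpha(p-\rho)$ (or its residue in $\nu$) vanishes on $\Pi_{p,\pm}$ or restricts to a nonzero $G'$-intertwiner. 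The expected pattern is that the SBOs with $q = p$ all kill $\Pi_{p,\pm}$, accounting for the disappearance of the $q = p$ part of Theorem B; while for $q = p-1$ the operators survive, yielding the continuous integral $\bigoplus_{\alpha=\pm}\int_{i\R_+}^\oplus \hat{\tau}_{p-1,\nu}^\alpha\, d\nu$ together with the displayed discrete residues $\hat{\tau}_{p-1,p-1-\rho'+k}^{\mp(-1)^k}$ in the stated range of $k$, the alternation in sign $\mp(-1)^k$ reflecting the alternation of the leading symbols at successive poles.

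The additional isolated summand $\widehat{\Pi}'_{p,\pm}$ in case (ii), respectively $\widehat{\Pi}'_{p-1,\pm}$ in case (iv), corresponds to a double residue of the SBO family at the parameter where the target $\tau_{p-1,\nu}^\alpha$ itself becomes reducible and $\Pi'_{p,\pm}$ (resp.\ $\Pi'_{p-1,\pm}$) emerges as its submodule. The existence of this discrete component is guaranteed by \cite{speh_venkataramana_2011}, and in the present framework it appears naturally as the limit of the boundary residue of the discrete sequence as $\lambda \to p-\rho$. Once all discrete components are accounted for, the Plancherel density on the continuous part is fixed by continuity in $\lambda$ from Theorem B together with the orthogonality of discrete and continuous spectra. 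Case (iii), with $n$ odd and $p = (n+1)/2$, is the degenerate one in which $\lambda = 1/2$ and the admissible index set for discrete residues on $\Pi_{p,\pm}$ is empty, so only the continuous spectrum for $q = (n-1)/2$ survives.

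The main obstacle is the case-by-case analysis of the limiting behavior of SBOs on the submodule $\Pi_{p,\pm}$ at the reducibility point $\lambda = p-\rho$: one must verify that precisely the residues indexed by $k \in (0,\rho'-p+1) \cap \Z$ (resp.\ $k \in (0,p-1-\rho') \cap \Z$) survive restriction to $\Pi_{p,\pm}$ with the correct sign pattern, and simultaneously rule out any further residues emerging from the boundary of the complementary series. For this I would rely on the explicit pole structure and leading symbols of the Kobayashi--Speh operators at the degeneration point, combined with unitary norm estimates ensuring that the limiting direct integral remains a bona fide Plancherel decomposition and that no discrete mass is lost in the passage to the limit.
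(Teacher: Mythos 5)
The high‑level strategy (analytic continuation of the $L^2(G'/K')$‑Plancherel towards the endpoint of the complementary series, reading off discrete spectrum as residues of the Kobayashi--Speh operators $\tilde{A}^\alpha_{(p,\lambda),(q,\nu)}$) is indeed the paper's approach, and your case (i) is handled correctly. However, there is a genuine gap in how you set up cases (ii)--(iv).

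You propose to realize $\Pi_{p,\pm}$ as the unitarizable \emph{submodule} of $\pi_{p,p-\rho}^\pm$ and then to extract its branching law by letting $\lambda \to p-\rho$ from inside the complementary series interval, examining which SBOs survive on the socle. The problem is that the complementary‑series inner product $\langle\cdot,\cdot\rangle_{p,\lambda}=\pm\langle\cdot,T_{p,\lambda}\cdot\rangle_{L^2(K)}$ degenerates exactly on $\ker T_{p,p-\rho}=\Pi_{p,\pm}$ as $\lambda\to p-\rho$. Consequently the analytic continuation of the Plancherel formula to $\lambda=p-\rho$ does not detect the submodule at all: it computes the norm on the \emph{quotient}, which by the exact sequence $0\to\Pi_{p,\pm}\to\pi_{p,p-\rho}^\pm\to\Pi_{p+1,\mp}\to 0$ is $\Pi_{p+1,\mp}$, not $\Pi_{p,\pm}$. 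There is no straightforward way to recover a Plancherel formula for the socle from this limit, and your argument does not supply one: knowing that certain SBOs do not vanish on $\Pi_{p,\pm}$ is not by itself a branching law. The paper sidesteps this by relabeling indices: it applies Corollary~\ref{C:cor:plancherel} to $\pi_{p-1,(p-1)-\rho}^\mp$, for which $\Pi_{p,\pm}$ is the \emph{quotient} (via $\norm{f}_{p-1,(p-1)-\rho}=\norm{\pr f}_{p-1,(p-1)-\rho,quo}$), and then uses the vanishing $t(p-1,p-2,(p-1)-\rho)=0$ against $t(p-1,p-1,(p-1)-\rho)\neq 0$ to kill the $q=p-2$ terms. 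This is why only $\hat{\tau}_{p-1,\cdot}$ appears in (ii)--(iv). A second issue: for $p=\frac{n}{2}$ (allowed in (ii) when $n$ is even) there is no complementary series to continue along; the paper treats this case separately using the direct sum $\pi_{\frac{n}{2},0}^\pm=\Pi_{\frac{n}{2},\pm}\oplus\Pi_{\frac{n}{2}+1,\mp}$ and the functional equations of Theorem~\ref{C:theorem:functional_equations}, which your argument does not address.

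Your observation that the $q=p$ operators annihilate the socle is correct (it follows from the functional equation $\tilde{A}_{(p,-\lambda),(p,\nu)}^\pm\circ T_{p,\lambda}=c(\lambda)\,\tilde{A}_{(p,\lambda),(p,\nu)}^\pm$ with $c(p-\rho)\neq 0$), and the heuristic for the isolated $\widehat{\Pi}'_{p,\pm}$ as the degenerate residue where $\tau_{p-1,\nu}^\alpha$ itself becomes reducible is essentially right. But to make the proposal work you must either apply the degenerate Plancherel formula to the quotient of the appropriately shifted principal series (as the paper does), or supply an independent mechanism that extracts the Plancherel formula for the socle at a parameter where the ambient inner product vanishes on it.
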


Speh--Venkataramana proved the inclusions 
\begin{align*}
	&\widehat{\Pi}'_{p,\pm} \subseteq \widehat{\Pi}_{p,\pm}|_{G'} &&p<\frac{n+1}{2}, \\
	&\widehat{\Pi}'_{p-1,\pm} \subseteq \widehat{\Pi}_{p,\pm}|_{G'}  &&p>\frac{n+1}{2},
\end{align*}
(see \cite[Theorem~1.4]{speh_venkataramana_2011}) and the theorem above gives the full decomposition of the cohomological representations $ \widehat{\Pi}_{p,\pm}|_{G'}$.

For $p\neq0,n$, the representations $\Pi_{p,\pm}$ are the only proper unitarizable composition factors of $\pi_{p,\lambda}^\pm$. For $p=0,n$ there are additional unitarizable composition factors $I_{p,j,\pm}$ for each positive integer $j$, occuring as quotients in $\pi_{p,\lambda}^\pm$ for $\lambda =-\rho-j$. We prove branching laws for the closures of these representations as well. Here complementary series, a cohomological representation, as well as the corresponding quotients for the subgroup $I'_{q,k,\pm}$, with $q=0,n-1$ and $k$ positive integers occur discretely.

\begin{theoremalph}[see Theorem \ref{C:theorem:branchin_p=0,n}]
	\begin{enumerate}[label=(\roman{*})]
		\item For $p=0$ we have
		$$\hat{I}_{0,j,\pm}|_{G'} \simeq \widehat{\Pi}'_{1,\mp} \oplus \bigoplus_{k=1}^j\hat{I}'_{0,k, \pm (-1)^{k+j} }   \oplus \bigoplus_{k \in (0,\rho') \cap \Z} \hat{\tau}_{0,-\rho'+k}^{\pm(-1)^{k+j}} \oplus \bigoplus_{\alpha=+,-} \int^\oplus_{i\R_+} \hat{\tau}_{0,\nu}^\alpha\, d\nu .$$
		\item For $p=n$ we have
		$$\hat{I}_{n,j,\pm}|_{G'} \simeq \widehat{\Pi}'_{n-1,\pm} \oplus\bigoplus_{k=1}^j \hat{I}'_{n-1,k, \pm (-1)^{k+j} }   \oplus \bigoplus_{k \in (0,\rho') \cap \Z} \hat{\tau}_{n-1,-\rho'+k}^{\pm(-1)^{k+j}} \oplus \bigoplus_{\alpha=+,-} \int^\oplus_{i\R_+} \hat{\tau}_{n-1,\nu}^\alpha\, d\nu .$$
	\end{enumerate}
\end{theoremalph}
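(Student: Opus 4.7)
The plan is to realize $I_{p,j,\pm}$ as a unitarizable quotient of $\pi_{p,-\rho-j}^\pm$, a situation available only for $p\in\{0,n\}$, and to obtain its branching law by analytic continuation of the Plancherel decompositions established in Theorems A--C. The continuous spectrum is handled by the family of Kobayashi--Speh symmetry breaking operators $A_{p,q,\lambda,\nu}^{\alpha,\beta}\colon \pi_{p,\lambda}^\pm\to\tau_{q,\nu}^\alpha$ from \cite{kobayashi_speh_2018}, which depend meromorphically on $(\lambda,\nu)$ and whose normalized versions implement the direct integrals $\int_{i\R_+}^\oplus\hat{\tau}_{q,\nu}^\alpha\,d\nu$ on the unitary axis. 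Following the template of Theorem B, I slide $\lambda$ along the real axis from $i\R$ down to $-\rho-j$; each time $\lambda$ crosses a singularity of the normalization, a pole of the operator family at some real $\nu$ contributes a $G'$-equivariant residue to the restriction of the unitary closure, giving rise to a discrete summand.

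\medskip

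The positions and signs of these residues are read off from the Gamma factors in the normalization and from the parity structure of the four SBO families indexed by $(\alpha,\beta)\in\{+,-\}^2$. For $p\in\{0,n\}$ only one value of $q$ is relevant, namely $q=0$ for $p=0$ and $q=n-1$ for $p=n$, because of the restriction rule for $\bigwedge\nolimits^p(\C^n)$ under $\Orm(n-1)$. At the deep reducibility point $\lambda=-\rho-j$ the residues organise into three families: after reflection $\nu\mapsto-\nu$ by unitarity, the negative real points give complementary series pieces $\hat{\tau}_{q,-\rho'+k}^{\pm(-1)^{k+j}}$ for $k\in(0,\rho')\cap\Z$; the integer-shifted points of distance $\leq j$ from $\rho'$ yield the relative discrete series pieces $\hat{I}'_{q,k,\pm(-1)^{k+j}}$ for $k=1,\dots,j$; and the point $\nu=\rho'$ yields the cohomological representation $\widehat{\Pi}'_{1,\mp}$ (for $p=0$) or $\widehat{\Pi}'_{n-1,\pm}$ (for $p=n$). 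The alternating sign $(-1)^{k+j}$ arises from the parity of the pole order of the relevant Gamma factor at the $k$-th reducibility point, while the overall sign $\pm$ is inherited from the ambient $\pi_{p,-\rho-j}^\pm$; the asymmetry between the two cohomological contributions $\widehat{\Pi}'_{1,\mp}$ and $\widehat{\Pi}'_{n-1,\pm}$ reflects the shift in the $\Orm(1)$-character between $\bigwedge\nolimits^0$ and $\bigwedge\nolimits^n$.

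\medskip

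It then remains to pass from $\hat{\pi}_{p,-\rho-j}^\pm|_{G'}$ to the quotient $\hat{I}_{p,j,\pm}|_{G'}$. The composition series of $\pi_{p,-\rho-j}^\pm$ for $p\in\{0,n\}$ is known: besides the top quotient $I_{p,j,\pm}$ it contains finitely many lower pieces $I_{p,k,\pm}$ with $k<j$, a cohomological piece $\Pi_{p,\pm}$, and a finite-dimensional constituent, whose branching laws are all covered by Theorem C or are elementary. Subtracting their contributions from the total residue decomposition layer by layer yields the formula claimed. The main obstacle will be this bookkeeping at the single point $\lambda=-\rho-j$, where many residue loci collide: different SBOs can share image subquotients across distinct layers of the filtration, some poles are cancelled by zeros of auxiliary Gamma factors, and the alternating signs $\pm(-1)^{k+j}$ require a careful simultaneous analysis of the reducibility points of both $\pi_{p,\lambda}^\pm$ and of the target series $\tau_{q,\nu}^\alpha$. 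The unitarity of each candidate summand is automatic from the identification with $\hat{I}'_{q,k,\cdot}$, $\hat{\tau}_{q,\cdot}^\cdot$ or $\widehat{\Pi}'_{\cdot,\cdot}$, so closing the argument reduces to showing that no further discrete component is produced in the quotient, which I expect to settle by a dimension count of symmetry breaking operators via \cite{kobayashi_speh_2018}.
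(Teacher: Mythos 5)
Your first two paragraphs track the paper's approach faithfully: $I_{p,j,\pm}$ is realized as a unitarizable quotient of $\pi_{p,-\rho-j}^\pm$, the continuous spectrum comes from the Kobayashi--Speh operator families, residues are collected while sliding $\lambda$ along the real axis down to $-\rho-j$, and only $q=0$ (resp.\ $q=n-1$) survives for $p=0$ (resp.\ $p=n$). The identification of the three discrete families and the $\pm(-1)^{k+j}$ sign pattern agree with Corollary~\ref{C:cor:plancherel}.

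The third paragraph, however, contains a genuine error. You assert that the composition series of $\pi_{p,-\rho-j}^\pm$ "contains finitely many lower pieces $I_{p,k,\pm}$ with $k<j$, a cohomological piece $\Pi_{p,\pm}$, and a finite-dimensional constituent" and propose a layer-by-layer subtraction. This is wrong on two counts. First, the paper records (after the irreducibility criterion) that whenever $\pi_{p,\lambda}^\pm$ is reducible the composition series has length \emph{two}: at $\lambda=-\rho-j$ with $p\in\{0,n\}$ the sequence is exactly $0\to F_{p,j,\pm}\to\pi_{p,-\rho-j}^\pm\to I_{p,j,\pm}\to 0$, with $F_{p,j,\pm}$ finite-dimensional and not unitarizable for $j>0$. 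There are no intermediate $I_{p,k,\pm}$ or $\Pi_{p,\pm}$ constituents. Second, and more importantly, a subtraction argument is the wrong mechanism: $\pi_{p,-\rho-j}^\pm$ itself is not unitarizable, so there is no ambient unitary branching law from which to subtract. What the paper actually uses is that the Knapp--Stein pairing $\langle\cdot,\cdot\rangle_{p,\lambda}=\pm\langle\cdot,T_{p,\lambda}\cdot\rangle_{L^2(K)}$ vanishes on $\ker T_{p,\lambda}=F_{p,j,\pm}$ by construction, so that $\langle\pr_\lambda f,\pr_\lambda f\rangle_{p,\lambda,quo}=\langle f,f\rangle_{p,\lambda}$; the analytically continued Plancherel formula of Corollary~\ref{C:cor:plancherel} therefore \emph{is} already the Plancherel formula for the unitary closure $\hat{I}_{p,j,\pm}$, with no subtraction, and the statement is read off from the vanishing/nonvanishing of the $t(p,q,\lambda)$ factors. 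Your proposed closing step — a dimension count of SBOs to rule out extra discrete pieces in the quotient — is thereby rendered unnecessary, and the filtration bookkeeping you flag as the "main obstacle" is a nonissue once the correct length-two composition series and the descent of the norm are used.
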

For all branching laws we obtain an explicit Plancherel theorem (see Corollary~\ref{C:cor:plancherel}).
We remark that the decompositions in the spherical case, i.e. $p=0$ have been proven before by Möllers--Oshima in \cite{MO15} by a different approach which is likely to generalize to arbitrary $p$. Our method of proof offers a more systematical perspective which does not rely on the nilradical to be abelian.
\subsection*{Method of proof}
The subgroup $G'$ acts on the real flag variety $G/P$ with an open orbit which is as a $G'$-space given by a $\Z/2\Z$-fibration over the Riemannian symmetric space $G'/K'$, where $K'$ is the maximal compact subgroup of $G'$ (see Proposition~\ref{C:cor:orbits} and Lemma~\ref{C:lemma:H-Stab}). Restriction to the open orbit naturally induces a $G'$-map
$$\Phi:\pi_{p,\lambda}^\pm\to L^2 \left(G'/K',\bigwedge\nolimits^p(\C^n)\right)$$
if $\Re \lambda >-\frac{1}{2}$ (see Lemma~\ref{C:lemma:L^2_condition}). The Plancherel and inversion formula for the space $L^2 \left(G'/K',\bigwedge\nolimits^p(\C^n)\right)$ is essentially due to \cite{camporesi_1997}. It is given in terms of Fourier transforms on $L^2\left(G'/K',\bigwedge\nolimits^p(\C^n)\right)$, which are $G'$-intertwining maps into principal series $\tau_{q,\nu}^\pm$. By composition of the map $\Phi$ and the Fourier transforms we obtain elements of the space $\Hom_{G'}(\pi_{p,\lambda}^\pm|_{G'},\tau_{q,\nu}^\pm)$ of symmetry breaking operators, which are in this special case classified by Kobayashi--Speh in \cite{kobayashi_speh_2018}. The symmetry breaking operators we obtain in this procedure are given by families of integral kernel operators with meromorphic dependence on $\lambda$ and $\nu$ and the meromorphic structure of the operators is studied in \cite{kobayashi_speh_2018} in great detail. This allows us to carefully analytically continue the Plancherel formula of $L^2 \left(G'/K',\bigwedge\nolimits^p(\C^n)\right)$ in $\lambda$ over the critical point $\lambda=-\frac{1}{2}$ on the real axis, towards the complementary series and unitarizable quotients $\Pi_{p,\pm}$.

\subsection*{Structure of this article}
In Section~\ref{C:sec:symm_breaking_general} we recall some facts about symmetry breaking operators between principal series representations and establish the necessary notation for principal series representations of $G$ in Section~\ref{C:sec:principal_series_G}. In Section~\ref{C:sec:component_group} we discuss the restriction to the identity component of the representations in question which will be used for arguments later in the article. In Section~\ref{C:sec:composition_series} and Section~\ref{C:sec:unitary_composition} we study the composition series of the principal series representations and give criteria for reducibility and unitarizability.
In Section~\ref{C:sec:classification_sbos} we recall the classification of symmetry breaking operators between $\pi_{p,\lambda}^\pm|_{G'}$ and $\tau_{q,\nu}^\pm$ from \cite{kobayashi_speh_2018} as well as their meromorphic structure. We recall functional equations of symmetry breaking operators and the standard Knapp--Stein intertwining operators in Section~\ref{C:sec:func_equations} and extend them to operators into quotients of principal series representations in Section~\ref{C:sec:sbo_quotients}. We establish the structure of the open $G'$-orbit in $G/P$ as a homogeneous $G'$-space in Section~\ref{C:sec:homogeneous_G'_space} and prove a Plancherel formula for the corresponding space in Section~\ref{C:sec:plancherel_G'} using the Plancherel formula for the restriction to the connected component of \cite{camporesi_1997} (Section~\ref{C:sec:plancherel_G'_0}). We lift these results to the representation $\pi_{p,\lambda}^\pm$ around the unitary axis in Section~\ref{C:sec:branching_laws}. The main result here is Theorem~\ref{C:theorem:coordinate_change}, by which the Fourier transform on the homogeneous $G'$-space is essentially given by symmetry breaking operators classified by Kobayashi--Speh on the principal series. Finally Section~\ref{C:sec:analytic_continuation} is dedicated to the proof of the main theorems where we analytically continue the Plancherel formula around the unitary axis towards the complementary series and the unitary composition factors.

\subsection*{Acknowledgements}
The results of this article have been part of my PhD thesis at Aarhus University and we thank my supervisor Jan Frahm for his help and input.

We thank Toshiyuki Kobayashi for inspiring discussions on the contents of this article.

The author was supported by the DFG project 325558309 and the KAW grant 2020.0275.

\subsection*{Notation}
For two sets $B \subseteq A$ we use the Notation $A \setminus B= \{ a\in A: a \notin B  \}$. We denote Lie groups by Roman capitals and their corresponding Lie algebras by the corresponding Fraktur lower cases. 

\newpage

\section{Symmetry breaking operators between principal series representations}\label{C:sec:symm_breaking_general}

We recall the basic facts about symmetry breaking operators between principal series representations from \cite{kobayashi_speh_2018}.

\subsection{Principal series representations}\label{C:sec:PrincipalSeries}

Let $G$ be a real reductive Lie group and $P$ a minimal parabolic subgroup of $G$ with Langlands decomposition $P=MAN$. For a finite-dimensional representation $(\xi,V)$ of $M$, a character $\lambda \in \mathfrak{a}_\C^*$ and the trivial representation $\mathbf{1}$ of $N$ we obtain a finite-dimensional representation $(\xi \otimes e^\lambda \otimes \mathbf{1}, V_{\xi,\lambda})$ of $P=MAN$. By smooth normalized parabolic induction this representation gives rise to the principal series representation
$$ \pi_{\xi,\lambda}:= \Ind_P^G(\xi \otimes e^\lambda \otimes \mathbf{1}) $$
as the left-regular representation of $G$ on the space
$$\{ \varphi \in C^\infty(G,V): \ \varphi(gman)=\xi(m)^{-1}a^{-(\lambda+\rho)}\varphi(g) \ \forall man\in MAN    \},$$
where $\rho:= \frac{1}{2} \tr \operatorname{ad}|_{\mathfrak{n}} \in \mathfrak{a}^*_\C$.
Let $\mathcal{V}_{\xi,\lambda}:= G \times_P V_{\xi,\lambda+\rho} \to G/P$ be the homogeneous vector bundle associated to $V_{\xi,\lambda+\rho}$. Then $\pi_{\xi,\lambda}$ identifies with the left-regular action of $G$ on the space of smooth sections $C^\infty(G/P,\mathcal{V}_{\xi,\lambda})$.

Now let $G'<G$ be a reductive subgroup. Similarly we let $P'=M'A'N'$ be a minimal parabolic subgroup of $G'$. For a finite-dimensional representation $(\eta,W)$ of $M'$ and $\nu \in (\mathfrak{a}'_\C)^*$ we obtain a finite-dimensional representation $(\eta \otimes e^\nu \otimes \mathbf{1},W_{\eta,\lambda})$ of $P'$ and the corresponding principal series representation
$$ \tau_{\eta,\nu}:= \Ind_{P'}^{G'}(\eta \otimes e^\nu \otimes \mathbf{1}). $$
Again we identify $\tau_{\eta,\nu}$ with the smooth sections $C^\infty(G'/P',\mathcal{W}_{\eta,\nu})$ of the homogeneous vector bundle $\mathcal{W}_{\eta,\nu}:= G' \times_{P'} W_{\eta,\nu+\rho'}\to G'/P'$, where $\rho':= \frac{1}{2}\tr \operatorname{ad}|_{\nfrak'}$.

\subsection{Symmetry breaking operators}

In these realizations the space of symmetry breaking operators between $\pi_{\xi,\lambda}$ and $\tau_{\eta,\nu}$ is given by the continuous linear $G'$-maps between the smooth sections of the two homogeneous vector bundles
$$\Hom_{G'}(\pi_{\xi,\lambda}|_{G'},\tau_{\eta,\nu})=\Hom_{G'}(C^\infty(G/P,\mathcal{V}_{\xi,\lambda})),C^\infty(G'/P',\mathcal{W}_{\eta,\nu})).$$
The Schwartz Kernel Theorem implies that every such operator is given by a $G'$-invariant distribution section of the tensor bundle $\mathcal{V}_{\xi^*,-\lambda}\boxtimes\mathcal{W}_{\eta,\nu}$ over $G/P\times G'/P'$, where $\xi^*$ is the representation contradigent to $\xi$. Since $G'$ acts transitively on $G'/P'$ we can consider these distributions as sections on $G/P$ with a certain $P'$-invariance:

\begin{theorem}[{\cite[Proposition 3.2]{kobayashi_speh_2015}}]
\label{C:theorem:KS_kernel}
There is a natural bijection
$$\Hom_{G'}(\pi_{\xi,\lambda}|_{G'},\tau_{\eta,\nu}) \stackrel{\sim}{\longrightarrow} (\mathcal{D}'(G/P,\mathcal{V}_{\xi^*,-\lambda})\otimes W_{\eta,\nu+\rho'})^{P'}, \quad T\mapsto u^T
.$$
\end{theorem}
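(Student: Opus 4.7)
The plan is to construct the bijection by realizing any symmetry breaking operator $T\in\Hom_{G'}(\pi_{\xi,\lambda}|_{G'},\tau_{\eta,\nu})$ as a distributional kernel on $G/P\times G'/P'$ via the Schwartz kernel theorem, and then to collapse the $G'/P'$-factor by exploiting that $G'$ acts transitively on it. The resulting distribution on $G/P$, valued in the fiber $W_{\eta,\nu+\rho'}$, will satisfy exactly the $P'$-equivariance that appears on the right-hand side.

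First I would apply the Schwartz kernel theorem to $T\colon C^\infty(G/P,\mathcal{V}_{\xi,\lambda})\to C^\infty(G'/P',\mathcal{W}_{\eta,\nu})$: such continuous linear maps correspond to distributional sections of the exterior tensor product bundle $\mathcal{V}_{\xi,\lambda}^{*}\boxtimes \mathcal{W}_{\eta,\nu}$ over $G/P\times G'/P'$. In the normalized parabolic induction convention, the shift by $\rho$ in the definition of $\pi_{\xi,\lambda}$ absorbs the half-density contribution, so the bundle dual to $\mathcal{V}_{\xi,\lambda}$ (tensored with the density bundle of $G/P$) is precisely $\mathcal{V}_{\xi^{*},-\lambda}$. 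Hence the kernel $K^T$ lies in $\mathcal{D}'(G/P\times G'/P',\mathcal{V}_{\xi^{*},-\lambda}\boxtimes \mathcal{W}_{\eta,\nu})$.

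Next, $T$ is $G'$-intertwining if and only if $K^T$ is invariant under the diagonal $G'$-action. Since $G'$ acts transitively on $G'/P'$ with stabilizer $P'$ at the base point $eP'$, restriction of $K^T$ to $G/P\times\{eP'\}$ yields a distribution $u^T\in\mathcal{D}'(G/P,\mathcal{V}_{\xi^{*},-\lambda})\otimes W_{\eta,\nu+\rho'}$, and the diagonal $G'$-invariance of $K^T$ is equivalent to the $P'$-invariance of $u^T$ under the combined action on $\mathcal{D}'(G/P,\mathcal{V}_{\xi^{*},-\lambda})$ (by pullback) and on the fiber $W_{\eta,\nu+\rho'}$ (by $\eta\otimes e^{\nu+\rho'}$). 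Conversely, given such a $P'$-invariant $u$, one recovers $K^T$ by the formula $K^T(x,g'P'):=(\eta\otimes e^{\nu+\rho'})(g')^{-1}\cdot(g'\cdot u)(x)$ for $g'\in G'$, which is well-defined by the $P'$-invariance of $u$ and gives a $G'$-invariant distribution on the product.

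The main obstacle is the careful bookkeeping of the normalization shifts by $\rho$ and $\rho'$: one must verify that the density factor on $G/P$ intrinsic to the Schwartz kernel theorem combines with the parameter $\lambda+\rho$ in the induction to produce exactly the parameter $-\lambda$ (without further $\rho$-shift) on the dual bundle, and analogously that the target fiber appears with parameter $\nu+\rho'$. Once this is verified, both directions of the bijection are clearly inverse to each other, and continuity of the reconstructed operator $T$ from a given $u$ follows from the smoothness of the $G'$-action on $G'/P'$ together with standard properties of distributional pairings; injectivity is automatic from the uniqueness clause of the Schwartz kernel theorem.
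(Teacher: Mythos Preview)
Your proposal is correct and follows the standard approach; however, the paper does not actually prove this statement itself. It is quoted verbatim as \cite[Proposition~3.2]{kobayashi_speh_2015} and stated without proof, so there is no ``paper's own proof'' to compare against. Your argument---Schwartz kernel theorem on $G/P\times G'/P'$, identification of the dual bundle via the $\rho$-shift, and reduction to a $P'$-invariant distribution on $G/P$ using the transitivity of $G'$ on $G'/P'$---is precisely the strategy used in the cited reference, so in that sense you have reconstructed the original proof.
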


In our case of interest the dimension of $\Hom_{G'}(\pi_{\xi,\lambda}|_{G'},\tau_{\eta,\nu})$ is in particular generically bounded by $1$.
\begin{theorem}[\cite{sun_zhu_2012} Theorem B]
	\label{C:theorem:mult_one}
	For $(G,G')=(\Orm(1,n+1),\Orm(1,n))$ we have
	$$\dim \Hom_{G'}(\pi|_{G'},\tau)\leq 1$$
	for all irreducible Casselman--Wallach representations $\pi$ of $G$ and $\tau$ of $G'$.
\end{theorem}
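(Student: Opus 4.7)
The plan is to deduce the bound from the Gelfand--Kazhdan criterion for multiplicity-free pairs. The first step is to reinterpret the intertwining space: combining Theorem~\ref{C:theorem:KS_kernel} for $\pi$ with an analogous dual statement for the contragredient $\tau^\vee$, it is enough to prove that $(G\times G', \Delta G')$ is a Gelfand pair in the classical sense, where $\Delta G'$ denotes the diagonally embedded subgroup of $G\times G'$. Following Aizenbud--Gourevitch--Rallis--Schiffmann and Sun--Zhu, this in turn reduces to producing an involutive anti-automorphism $\sigma$ of $G\times G'$ that preserves $\Delta G'$ and under which every $\Delta G'$-bi-invariant Schwartz distribution on $G\times G'$ is invariant.

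For $(G,G') = (\Orm(1,n+1),\Orm(1,n))$ realised as matrix groups, I would take $\sigma$ to be matrix transpose on each factor, $\sigma(g_1,g_2) = (g_1^\top, g_2^\top)$. This clearly preserves $\Delta G'$ and is involutive. Under the identification $(G\times G')/\Delta G' \simeq G$, $(g_1,g_2)\mapsto g_1 g_2^{-1}$, the $\Delta G'$-bi-invariance of distributions on $G\times G'$ translates to $\operatorname{Ad}(G')$-invariance of distributions on $G$, and the condition to be checked becomes: every $\operatorname{Ad}(G')$-invariant tempered distribution $D$ on $G$ is fixed by $g\mapsto g^\top$. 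Using a Cartan-type decomposition $G = G' \cdot \overline{A^+} \cdot G'$ attached to the rank-one symmetric pair $G/G'$, one sees that transpose and $G'$-conjugation act compatibly on the semisimple regular part, so the invariance is automatic on a dense open subset of $G$.

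The real obstacle is to propagate the invariance across the singular locus, where $\operatorname{Ad}(G')$-invariant distributions may be supported with nontrivial local structure. Here I would invoke a descent argument in the spirit of Harish-Chandra: near a singular semisimple element $x_0$, Luna's slice theorem reduces the analysis to invariant distributions on the normal slice to the $G'$-orbit through $x_0$, on which the action is governed by the centralizer $Z_{G'}(x_0)$ acting on $\mathfrak{g}/\mathfrak{z}_{\mathfrak{g}}(x_0)$. For the pair at hand, these centralizers are essentially products of orthogonal groups of lower rank meeting the subgroup in an analogous smaller orthogonal subgroup, so the problem unfolds inductively on $n$. The inductive step requires careful control of the wave-front sets of the putative non-transpose-invariant distributions to exclude support on the ``bad'' nilpotent orbits; this singular-orbit analysis is precisely what Sun--Zhu carry out in detail, and it is where essentially all of the real work of the proof resides. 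Once this invariance is established, the formal Gelfand--Kazhdan machinery yields the stated bound $\dim\Hom_{G'}(\pi|_{G'},\tau)\leq 1$.
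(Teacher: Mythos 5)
This statement is not proved in the paper at all: it is quoted verbatim as Theorem~B of Sun--Zhu, and the author immediately moves on. So there is no ``paper's proof'' to compare against. What you have written is a sketch of the proof of the cited theorem itself, and at the level of broad architecture you have the right picture: reduce multiplicity one for $(G\times G',\Delta G')$ to a Gelfand--Kazhdan criterion, translate the distributional condition to $\operatorname{Ad}(G')$-invariant distributions on $G$, handle the regular semisimple locus by a polar-type decomposition, and propagate invariance across the singular set by Harish-Chandra descent and a wave-front-set analysis of nilpotent support. This is indeed the shape of the AGRS/Sun--Zhu argument, and you are right that the entire substance of the proof lives in the last two steps.

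There is, however, a concrete gap in your choice of anti-involution. You take $\sigma(g_1,g_2)=(g_1^\top,g_2^\top)$. For $g\in\Orm(1,n+1)$ the identity $g^\top J g=J$ (with $J=\operatorname{diag}(-1,1,\dots,1)$ the Gram matrix of the form) gives $g^\top=Jg^{-1}J$, and in the embedding used in this paper $J$ lies in $G'=\Orm(1,n)$ because it fixes the last coordinate. Consequently, on any $\operatorname{Ad}(G')$-invariant distribution $D$ on $G$, invariance under $g\mapsto g^\top$ is the same as invariance under $g\mapsto g^{-1}$; your anti-involution is, up to an inner twist by an element of $G'$, just inversion. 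Whether $\operatorname{Ad}(G')$-invariant tempered distributions on $G$ are automatically inversion-invariant is precisely what needs a proof, and the formal Gelfand--Kazhdan criterion with this $\sigma$ only returns the statement $\dim\Hom_{G'}(\pi,\tau)\cdot\dim\Hom_{G'}(\pi^\vee,\tau^\vee)\le 1$, which is circular here because both factors are equal by self-duality. The anti-involution actually used by AGRS and Sun--Zhu for the orthogonal pair is twisted by the element $g_0=\operatorname{diag}(1,\dots,1,-1)\in\Orm(1,n+1)\setminus\Orm(1,n)$ which centralises $G'$ but is not in $G'$; it is this extra, genuinely outer symmetry on $G'$-orbits that the distributional theorem asserts, and it cannot be replaced by plain transpose. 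A related soft point: the decomposition $G=G'\,\overline{A^+}\,G'$ you invoke belongs to a \emph{symmetric} pair, whereas $(\Orm(1,n+1),\Orm(1,n))$ is only a spherical pair (the symmetric subgroup fixed by $\operatorname{Ad}(g_0)$ is $\Orm(1,n)\times\Orm(1)$), so the regular locus has to be handled through the correct spherical slice, not a Cartan decomposition.
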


\subsection{Restriction to the open Bruhat cell}\label{C:sec:DistributionKernelsOnOpenBruhatCell}

From now on assume $M'=M\cap G'$, $A'=A\cap G'$ and $N'=N\cap G'$. Let $\Nbar$ be the nilradical of the parabolic opposite to $P$. Since $\Nbar$ is unipotent we obtain a parameterization of the open Bruhat cell $\Nbar P/P \subseteq G/P$ in terms of the Lie algebra $\nbar$ by the map
$$\nbar\stackrel{\exp}{\longrightarrow}  \Nbar \lhook \joinrel \longrightarrow G \longrightarrow G/P
,$$ 
such that we can consider $\nbar$ as an open dense subset of $G/P$. Then the restriction
$$\mathcal{D}'(G/P,\mathcal{V}_{\xi^*,-\lambda})\longrightarrow \mathcal{D}'(\nbar,\mathcal{V}_{\xi^*,-\lambda}|_{\nbar})$$ can be used to define a $\g$-action on $\mathcal{D}'(\nbar,\mathcal{V}_{\xi^*,-\lambda}|_{\nbar})\cong\mathcal{D}'(\nbar)\otimes V_{\xi^*,-\lambda+\rho}$ by vector fields. Moreover, since ${\rm Ad}(M'A')$ leaves $\nbar$ invariant, the restriction is further $M'A'$-equivariant.
If we assume $P'\Nbar P=G$, i.e. every $P'$-orbit in $G/P$ meets the open Bruhat cell $\Nbar P$, then symmetry breaking operators can be described in terms of $(M'A',\mathfrak{n}')$-invariant distributions on $\nbar$:

\begin{theorem}[{\cite[Theorem 3.16]{kobayashi_speh_2015}}]\label{thm:KS-distr-kernel}
\label{C:theorem:KS_kernel_open_cell}
	Assume $P'\Nbar P=G$, then there is a natural bijection
	$$\Hom_{G'}(\pi_{\xi,\lambda}|_{G'},\tau_{\eta,\nu}) \stackrel{\sim}{\longrightarrow} (\mathcal{D}'(\nbar)\otimes V_{\xi^*,-\lambda+\rho}\otimes W_{\eta, \nu+\rho'})^{M'A',\mathfrak{n}'}.$$
\end{theorem}

Given a distribution kernel $u^T$, the corresponding operator $T \in \Hom_{G'}(\pi_{\xi,\lambda}|_{G'},\tau_{\eta,\nu})$ is given by
\begin{equation}\label{eq:kernel_operator}
T \varphi(h)=\langle u^T, \varphi(h\exp(\;\cdot\;)) \rangle.
\end{equation}

\section{Principal series representations of rank one orthogonal groups}\label{C:sec:principal_series_G}

Let $G= \Orm(1,n+1)$ denote the group of $(n+2)\times(n+2)$ matrices over $\R$ preserving the quadratic form
$$(z_0,z_1,\ldots,z_{n+1})\mapsto-|z_0|^2+|z_1|^2+\cdots+|z_{n+1}|^2. $$

Let $P$ be the minimal parabolic subgroup of $G$ with Langlands decomposition $P=MAN$ given by
\begin{align*}
 M &= \left\{ \begin{pmatrix}a & & \\& a & \\& & b\end{pmatrix}: a \in \Orm(1), b \in \Orm(n)\right\},\\
 A &= \exp (\mathfrak{a}) \qquad \text{where } \mathfrak{a}=\R H, \quad H=\begin{pmatrix}0 & 1 & \\1 & 0 & \\& & \mathbf{0}_n\end{pmatrix},\\
 N &= \exp (\mathfrak{n}) \qquad \text{where } \mathfrak{n}=\left\{ \begin{pmatrix}0 & 0 & X \\0 & 0 & X \\X^T & -X^T & \mathbf{0}_n\end{pmatrix}: X\in \R^n \right\}.
\end{align*}
Note that $X\in \R^n$ is considered as a row vector.
We identify $\mathfrak{a}_\C^*\cong\C$ by $\lambda\mapsto\lambda(H)$. Then in particular
$$ \rho= \frac{1}{2} \tr \operatorname{ad}|_\mathfrak{n}(H)= \frac{n}{2}. $$

Consider the finite dimensional representations $$\xi=\alpha\otimes {\sigma_p},$$ of $M=\Orm(1)\times \Orm(n)$, with $\alpha \in \{ \mathbf{1}, \operatorname{sgn}\}\cong
\widehat{\Orm}(1)$ and ${\sigma_p}= \bigwedge\nolimits^p {(\C^n)}$ with $p\in \{ 0, \dots,n \}$.
We define the principal series representations
$$\pi_{p,\lambda}^\pm:=\Ind(\xi \otimes e^\lambda \otimes \mathbf{1})$$
where we use the index $+$ if $\alpha=\mathbf{1}$ and $-$ if $\alpha=\sgn$.

Similarly we consider the finite dimensional $M'$ representations
$$\eta=\alpha\otimes {\delta_q}$$
with $\alpha$ as above and ${\delta_q}=\bigwedge\nolimits^q {(\C^{n-1})}$ with $q\in\{0, \dots, n-1\}$
and denote the corresponding prinicpal series representations by $\tau_{q,\nu}^\pm$.

\subsection{The non-compact picture}

Let $\Nbar$ be the nilradical of the parabolic subgroup opposite to $P$. Since $\Nbar$ is unipotent, we identify it with its Lie algebra $\nbar\cong \R^n$ in terms of the exponential map:
$$
\R^n\to\Nbar, \quad X\mapsto\overline{n}_{X}:=\exp \begin{pmatrix}
0& 0 & X \\
0 & 0& -X \\
X^T & X^T & \mathbf{0}_n
\end{pmatrix}.
$$
Here we consider $X\in \R^n$ again as a row vector.
Since $\Nbar P$ is open and dense in $G$, the restriction of $\pi_{p,\lambda}^\pm$ to functions on $\Nbar$ is one-to-one. The resulting realization in $C^\infty(\Nbar)$ of $\pi_{p,\lambda}^\pm$ is called the \emph{non-compact picture of $\pi_{p,\lambda}^\pm$}. For $g \in \Nbar MAN$ we write $g=\overline{n}(g) m(g) a(g) n(g)$ for the obvious decomposition. Then the $G$-action in the non-compact picture is given by
\begin{equation}
\label{C:eq:action_non_compact_picture}
\pi_{p,\lambda}^\pm(g)f(X)=\xi^{-1}(m(g^{-1}\overline{n}_X))a(g^{-1} \overline{n}_X)^{-(\lambda+\rho)}f(\log\overline{n}(g^{-1}\overline{n}_X)),
\end{equation}
whenever $g^{-1}\overline{n}_X\in\overline{N}MAN$.

Let $\tilde{w}_0=\diag(-1,1,\mathbf{1}_n)$, then $\tilde{w}_0$ represents the longest Weyl group element of $G$ with respect to $A$. The following Lemma is easily verified by standard calculations.

\begin{lemma}
\label{C:prop:matrix_decompositions}
\begin{enumerate}[label=(\roman{*}), ref=\thetheorem(\roman{*})]
\item\label{C:prop:matrix_decompositions:iii} Let $m=\diag(a,a,b^{-1}) \in M$ with $a \in \Orm(1)$ and $b\in \Orm(n)$, then
$$ m \overline{n}_Xm^{-1}=\overline{n}_{aXb}. $$
\item\label{C:prop:matrix_decompositions:ii} Let $t\in \R$ and $a=\exp(tH)$, then
$$ a\overline{n}_Xa^{-1}=\overline{n}_{e^{-t}X}. $$
\item\label{C:prop:matrix_decompositions:i} Let $X \neq 0$, then  $\tilde{w}_0 \overline{n}_{X}=\overline{n}_{U}m a n$ with $n\in N$ and
\begin{equation*}
 U=\frac{-X}{\abs{X}^2}, \qquad a = \exp(2\log(\abs{X})H).
\end{equation*}
$$m=\diag\left(-1,-1,\psi_n(X)\right),$$
with $$\psi_n(X)=\mathbf{1}_n-\frac{2X^TX}{\abs{X}^2} \in \Orm(n)$$.
\end{enumerate}
\end{lemma}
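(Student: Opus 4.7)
The plan is to verify all three identities by direct matrix computation, exploiting the fact that the defining matrix $Y(X)$ of $\bar n_X$ satisfies $Y(X)^3 = 0$. This gives the closed form
$$\bar n_X = I_{n+2} + Y(X) + \tfrac12 Y(X)^2,$$
from which every required product can be written out explicitly. For (i) and (ii) the cleanest approach is through the adjoint action rather than directly multiplying the exponentials.

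For (i), conjugation commutes with $\exp$, so $m\bar n_X m^{-1} = \exp(\operatorname{Ad}(m) Y(X))$. Writing $m = \operatorname{diag}(a,a,b^{-1})$, a block-matrix multiplication $m\, Y(X)\, m^{-1}$ produces the matrix with $X$ replaced by $aXb$ (using $a^2 = 1$ and $b \in \Orm(n)$); this gives $\operatorname{Ad}(m)Y(X) = Y(aXb)$, hence the claim. For (ii), the key observation is that $\bar{\mathfrak n}$ is the restricted root space with $[H,\,\cdot\,] = -\mathrm{id}$, so $\operatorname{Ad}(\exp tH)Y(X) = e^{-t}Y(X) = Y(e^{-t}X)$, and the claim follows by applying $\exp$.

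Part (iii) is the main content and requires genuine work. I would first compute $\tilde w_0 \bar n_X$ explicitly as a matrix, which only differs from $\bar n_X$ by a sign in the first row. For $X\neq 0$ I then set $U = -X/|X|^2$ and claim the decomposition $\tilde w_0 \bar n_X = \bar n_U\, m\, a\, n$ by exhibiting $n$ as the leftover factor; the strategy is to show that
$$n := (\bar n_U\, m\, a)^{-1}\,\tilde w_0 \bar n_X$$
lies in $N$, by checking that the product has the matrix form defining $\mathfrak n$ after exponentiation. The parameters are forced: the ratio of the $(1,2)$ and $(2,2)$ blocks of $\tilde w_0\bar n_X$ read off the $A$-component, yielding $a = \exp(2\log|X|\,H)$ (with $\cosh$ and $\sinh$ of $2\log|X|$ giving exactly $\tfrac12(|X|^2 \pm |X|^{-2})$), while the sign on the first coordinate of $\tilde w_0$ and the reflection needed to send $X$ to $-X$ in the last block force $m = \operatorname{diag}(-1,-1,\psi_n(X))$.

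The main bookkeeping obstacle is the block $\psi_n(X) = I_n - 2X^TX/|X|^2$, which must combine with the $X^T$ columns produced by $\bar n_U$ (recall $|U| = |X|^{-1}$, so $\bar n_U$ carries entries of size $|X|^{-2}$) to cancel the $X^T$ columns of $\tilde w_0\bar n_X$ into pure-$N$ form. This cancellation is driven by the single identity $\psi_n(X)\,X^T = -X^T$, together with $\psi_n(X)^2 = I_n$ and $\psi_n(X) \in \Orm(n)$, which are all immediate from the definition. Once this identity is in place, the remaining entries of $(\bar n_U\, m\, a)^{-1}\tilde w_0\bar n_X$ assemble into a matrix of the form $I + Z$ with $Z \in \mathfrak n$, completing the decomposition.
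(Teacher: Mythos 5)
The paper offers no proof beyond the remark that the lemma ``is easily verified by standard calculations,'' and your proposal spells out exactly those standard calculations, so the approach is essentially the paper's. A few sanity checks confirm the plan is sound: $Y(X)^3=0$ does hold (a direct block computation gives $Y(X)^2 = \begin{pmatrix}|X|^2 & |X|^2 & 0\\ -|X|^2 & -|X|^2 & 0\\ 0&0&0\end{pmatrix}$ and then $Y(X)^3=0$); the bracket $[H,Y(X)]=-Y(X)$ is correct, which justifies (ii) via the adjoint action; and the Householder identities $\psi_n(X)^2=\mathbf 1_n$, $\psi_n(X)\in\Orm(n)$, $\psi_n(X)X^T=-X^T$ are exactly what is needed to push the computation in (iii) through. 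Your plan of verifying (iii) by defining $n:=(\bar n_U\,m\,a)^{-1}\tilde w_0\bar n_X$ and checking it lands in $N$ is the clean way to organize the bookkeeping, and the hyperbolic identities $\cosh(2\log|X|)=\tfrac12(|X|^2+|X|^{-2})$, $\sinh(2\log|X|)=\tfrac12(|X|^2-|X|^{-2})$ match the required $A$-component. The proposal is correct.
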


These decompositions immediately imply the following formulas for the action of $P$ and $\tilde{w}_0$:

\begin{prop}
\begin{enumerate}[label=(\roman{*}), ref=\thetheorem(\roman{*})]
\item\label{C:prop:GroupActionM} For $m=\diag(a^{-1},a^{-1},b)\in M$ with $a\in\Orm(1)$ and $b\in\Orm(n)$:
$$ \pi_{p,\lambda}^\pm(m)u(X) =\xi^{-1}(m) u(aXb). $$
\item For $t\in\R$ and $a=\exp(tH)$:
$$ \pi_{p,\lambda}^\pm(a)u(X) = e^{(\lambda+\rho)t}u(e^tX). $$
\item\label{C:prop:GroupActionNbar} For $Y\in\R^n$:
$$ \pi_{p,\lambda}^\pm(\overline{n}_{Y})u(X) = u(X-Y). $$
\item
\label{C:prop:longest_weyl_group_element_action}
For the action of $\tilde{w}_0$ we have
\begin{equation*}
	\pi_{p,\lambda}^\pm(\tilde{w}_0)u(X)=\xi^{-1}(\diag(-1,-1,\psi_n(X)))\abs{X}^{-2(\lambda+\rho)}u({\sigma}(X)).
\end{equation*}
where ${\sigma}: \nbar \setminus \{0 \} \to \nbar \setminus \{0 \}$ is the inversion given by
$$\sigma(X)=\frac{-X}{\abs{X}^2}.
$$
\end{enumerate}
\end{prop}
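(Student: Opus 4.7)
The strategy is to apply the general non-compact-picture action formula \eqref{C:eq:action_non_compact_picture} to each of the four specific group elements in turn, reading off the $\Nbar MAN$-decomposition of $g^{-1}\bar{n}_X$ directly from Lemma~\ref{C:prop:matrix_decompositions}.

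For (i), taking $g=m=\diag(a^{-1},a^{-1},b)$, the conjugation identity of Lemma~\ref{C:prop:matrix_decompositions:iii} (applied to $m^{-1}$) yields $m^{-1}\bar{n}_X = (m^{-1}\bar{n}_X m)\cdot m^{-1} = \bar{n}_{aXb}\cdot m^{-1}$; the $A$- and $N$-components are trivial, so the formula collapses to a translation of $X$ twisted by a character of $M$. For (ii), Lemma~\ref{C:prop:matrix_decompositions:ii} applied with parameter $-t$ gives $a^{-1}\bar{n}_X = \bar{n}_{e^{t}X}\cdot a^{-1}$, and under the identification $\mathfrak{a}_\C^*\cong\C$ the factor $a(g^{-1}\bar{n}_X)^{-(\lambda+\rho)}$ becomes $e^{t(\lambda+\rho)}$. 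Part (iii) is immediate from the abelianness of $\bar{\mathfrak{n}}$, a general feature of split rank one: one has $[\bar{\mathfrak{n}},\bar{\mathfrak{n}}]=0$, the Baker--Campbell--Hausdorff series truncates, and $\bar{n}_{-Y}\bar{n}_X=\bar{n}_{X-Y}$ with no $M$- or $A$-contribution, so \eqref{C:eq:action_non_compact_picture} reduces to pure translation by $-Y$.

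Part (iv) is the most substantive. Since $\tilde{w}_0=\diag(-1,1,\mathbf{1}_n)$ is an involution, $\tilde{w}_0^{-1}=\tilde{w}_0$, and Lemma~\ref{C:prop:matrix_decompositions:i} provides the full Bruhat decomposition
$$
\tilde{w}_0\bar{n}_X = \bar{n}_{\sigma(X)}\cdot\diag(-1,-1,\psi_n(X))\cdot\exp(2\log|X|\,H)\cdot n
$$
for every $X\neq 0$. The $A$-factor contributes the scalar $|X|^{-2(\lambda+\rho)}$, the $M$-factor contributes $\xi^{-1}(\diag(-1,-1,\psi_n(X)))$, and the argument shifts to $\sigma(X)=-X/|X|^2$. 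Since $\bar{N}\smallsetminus\{0\}$ is open and dense in $\bar{N}\cong\nbar$, this identity determines the action on smooth (and distributional) sections.

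There is no genuine obstacle; the only subtle point is the bookkeeping of inverses in (i), where $m(g^{-1}\bar{n}_X)=m^{-1}$ rather than $m$, so one must track the convention for $\xi^{-1}$ carefully in order to land on the stated expression. Everything else is a direct substitution into \eqref{C:eq:action_non_compact_picture}, justifying the remark that these formulas are verified by standard calculations.
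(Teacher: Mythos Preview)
Your approach is correct and matches the paper's, which simply notes that the decompositions of Lemma~\ref{C:prop:matrix_decompositions} immediately imply these formulas via \eqref{C:eq:action_non_compact_picture}. One minor quibble: abelianness of $\bar{\mathfrak{n}}$ is \emph{not} a general feature of real rank one groups (it fails for $\mathrm{SU}(1,n)$, $\mathrm{Sp}(1,n)$, and the exceptional case, where the nilradical is two-step), but it does hold for $\Orm(1,n+1)$, so your argument for (iii) is valid here.
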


Note that $X\in\R^n$ is a row vector, so that matrix multiplication is from the right.

\subsection{Orbit structure of \texorpdfstring{$G/P$}{G/P}}

By \cite[Proposition~2.9]{frahm_weiske_2020}, the $P'$-orbits in $G/P$ are given by the following.

\begin{prop}
\label{C:cor:orbits}
The $P'$-orbits in $G/P$ and their closure relations are
$$ \mathcal{O}_A \stackrel{1}{\rule[.5ex]{3em}{.4pt}} \mathcal{O}_B \stackrel{n-1}{\rule[.5ex]{3em}{.4pt}} \mathcal{O}_C,$$
where
\begin{align*}
 \mathcal{O}_A &= P'\cdot\tilde{w}_0\overline{n} P = \tilde{w}_0(\Nbar\setminus\Nbar')P,\\
 \mathcal{O}_B &= P'\cdot\tilde{w}_0P = \tilde{w}_0\Nbar'P,\\
 \mathcal{O}_C &= P'\cdot\mathbf{1}_{n+2}P,
\end{align*}
for some $\overline{n}\in \Nbar \setminus \Nbar'$. Here $X\stackrel{k}{\rule[.5ex]{2em}{.4pt}}Y$ means that $Y$ is a subvariety of $\bar{X}$ of co-dimension $k$.
\end{prop}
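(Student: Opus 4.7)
The plan is to combine the Bruhat decomposition of $G$ with an explicit description of the $P'$-action on the open Bruhat cell. Using $G = P \sqcup P\tilde w_0 P$ together with $\tilde w_0 N\tilde w_0^{-1} = \Nbar$, I would obtain $G/P = \{eP\} \sqcup \tilde w_0 \Nbar P/P$; since $P' \subset P$, the closed cell $\{eP\}$ is already a single $P'$-orbit, yielding $\mathcal{O}_C$.

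On the open cell, parameterized by $\nbar \cong \R^n$ via $X\mapsto \tilde w_0\bar n_X P$, the first task is to conjugate $P'$ through $\tilde w_0$. Since $\tilde w_0\in G'$ is the longest Weyl element for $G'$ as well, a direct matrix check shows $\tilde w_0$ commutes elementwise with $M' = \Orm(1)\times\Orm(n-1)$, inverts $A'$ via $\operatorname{Ad}(\tilde w_0)H=-H$, and sends $N'$ to $\Nbar'$. Hence $\tilde w_0^{-1}P'\tilde w_0 = M'A'\Nbar'$. Writing any $p'\in P'$ accordingly and applying the first two parts of Lemma~\ref{C:prop:matrix_decompositions} gives
$$p'\cdot \tilde w_0\bar n_X P = \tilde w_0\,\bar n_{\operatorname{Ad}(m'a')(X+Y)}\,P,$$
so the induced $P'$-action on $\nbar\cong\R^n$ reads $X\mapsto \operatorname{Ad}(m'a')(X+Y)$ with $(m',a',Y)\in M'\times A'\times \nbar'$.

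From this explicit formula the orbit count is immediate: translations by $Y\in\nbar' = \R^{n-1}\times\{0\}$ let one freely move the first $n-1$ coordinates, while $A'$ and the $\Orm(1)$-factor of $M'$ scale the last coordinate by an arbitrary nonzero real. Thus $X_n = 0$ is preserved along the orbit, and otherwise $X_n$ can be sent to any nonzero value. This produces exactly the two orbits $\nbar'$ and $\nbar\setminus\nbar'$ on $\nbar$, corresponding to $\mathcal{O}_B = \tilde w_0 \Nbar' P$ and $\mathcal{O}_A = \tilde w_0 (\Nbar\setminus\Nbar') P$.

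For the closure relations, $\bar{\mathcal{O}_A} = G/P$ is automatic from openness, giving $\mathcal{O}_B$ codimension $1$ in $\bar{\mathcal{O}_A}$. For $\bar{\mathcal{O}_B}$ I would invoke Lemma~\ref{C:prop:matrix_decompositions:i}, which rewrites $\tilde w_0\bar n_X P = \bar n_{-X/\abs{X}^2}P$; letting $\abs{X}\to\infty$ along $X\in\nbar'$ the image tends to $eP$, so $\mathcal{O}_C\subset\bar{\mathcal{O}_B}$ with codimension $n-1$. The one delicate point is the normalizer identity $\tilde w_0^{-1}P'\tilde w_0 = M'A'\Nbar'$, which is exactly where the compatibility of the embedding $G'\hookrightarrow G$ (ensuring $\tilde w_0\in G'$) enters; once it is in hand, the orbit and closure computations are routine.
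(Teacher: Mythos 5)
The paper does not prove this proposition directly; it cites \cite[Proposition~2.9]{frahm_weiske_2020} and moves on. Your argument therefore fills in a proof the paper leaves implicit, and it is correct: the Bruhat decomposition $G/P = \{eP\}\sqcup \tilde w_0\Nbar P/P$ isolates $\mathcal{O}_C$ at once; the identity $\tilde w_0^{-1}P'\tilde w_0 = M'A'\Nbar'$ is right (using $\tilde w_0\in G'$, $\operatorname{Ad}(\tilde w_0)H=-H$, $\tilde w_0 N\tilde w_0^{-1}=\Nbar$ and intersection with $G'$); the resulting action $X\mapsto \operatorname{Ad}(m'a')(X+Y)$ on $\nbar\cong\R^n$ visibly preserves the hyperplane $\nbar'=\{X_n=0\}$, translations are transitive on $\nbar'$, and $\Orm(1)\times A'$ scales $X_n$ by an arbitrary nonzero real while $\nbar'$-translations sweep out the complementary directions, giving exactly two orbits in the open cell. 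The closure relation $\mathcal{O}_C\subset\bar{\mathcal{O}}_B$ via Lemma~\ref{C:prop:matrix_decompositions:i} and $|X|\to\infty$ is also correct (checkable in the compact picture on $S^n$), and the codimensions $1$ and $n-1$ follow by dimension count. This is the standard approach one would expect the cited general-rank-one result to specialize to, so there is nothing to flag; you have simply unwound the citation into a self-contained computation.
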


In particular the orbit $\mathcal{O}_A$ is open in $G/P$.

\section{The component group \texorpdfstring{$G/G_0$}{G/G0}}\label{C:sec:component_group}
We study the restriction of representations of $G$ to the identity component $G_0$.
\subsection{Global characters of \texorpdfstring{$\Orm(1,n+1)$}{O(1,n+1)}}
$G=\Orm(1,n+1)$ is a disconnected group with four connected components and the identity component $G_0$ is isomorphic to $\SO_0(1,n+1)$. The component group is given by $G/G_0 \cong \Z / 2\Z \times \Z / 2\Z$. Hence there are four global characters $\chi_{\pm,\pm}$ of $G$ which are restricted to the subgroup $M=\Orm(1)\times \Orm(n)$ given by
$$\chi_{+,+}|_M=\mathbf{1}\otimes \mathbf{1}, \qquad \chi_{+,-}|_M=\mathbf{1}\otimes \det,$$
$$\chi_{-,+}|_M=\sgn\otimes \mathbf{1}, \qquad \chi_{-,-}|_M=\sgn\otimes \det.$$
We remark that $\chi_{+,-}$ is the determinant on $G$.

Note that as $\Orm(n)$-representations we have
$$\bigwedge\nolimits^p {(\C^n)} \cong \bigwedge\nolimits^{n-p} {(\C^n)} \otimes \det$$ such that for the principal series representation $\pi_{p,\lambda}^\pm$ we have
$$\chi_{+,-}\otimes \pi_{p,\lambda}^\pm \cong \pi_{n-p,\lambda}^\pm, \qquad \chi_{-,+}\otimes \pi_{p,\lambda}^\pm \cong \pi_{p,\lambda}^\mp,\qquad \chi_{-,-}\otimes \pi_{p,\lambda}^\pm \cong \pi_{n-p,\lambda}^\mp.$$

\subsection{Restriction to the identity component}\label{C:sec:restriction_identity_comp}

The following lemma is similar to \cite[Lemma 15.2]{kobayashi_speh_2018}.
\begin{lemma}\label{C:lemma:restriction_reducible_characters}
Let $\pi$ be an irreducible admissible representation of $G$.
\begin{enumerate}[label=(\roman{*})]
\item If $\chi \otimes \pi \not\cong \pi$ for all $\chi\in \{ \chi_{+,-},\chi_{-,+}, \chi_{-,-}  \}$ then $\pi|_{G_0}$ is irreducible.
\item  If $\chi_0 \otimes \pi \cong \pi$ for $\chi_0 \in  \{ \chi_{+,-},\chi_{-,+}, \chi_{-,-}  \}$ and $\chi \otimes \pi \not\cong \pi$ for all $\chi_0\neq \chi \in  \{ \chi_{+,-},\chi_{-,+}, \chi_{-,-}  \}$,
then $\pi|_{G_0}=\pi^{(+)}\oplus \pi^{(-)}$ decomposes into two non-isomorphic irreducible $G_0$ representations $\pi^{(+)}$ and $\pi^{(-)}$
\end{enumerate}
\end{lemma}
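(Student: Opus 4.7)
The plan is to apply classical Clifford theory to the finite-index normal subgroup $G_0 \triangleleft G$, exploiting that the quotient $G/G_0 \cong \Z/2\Z \times \Z/2\Z$ is abelian. Since $\pi$ is admissible and $[G:G_0]=4$, the restriction $\pi|_{G_0}$ is admissible and decomposes as a finite semisimple sum
$$\pi|_{G_0} \;\cong\; \bigoplus_{i=1}^{r} m_i\,\sigma_i$$
into pairwise non-isomorphic irreducible $G_0$-representations, with $r\leq 4$. I would work at the level of the underlying $(\mathfrak{g},K)$-module so that semisimplicity and Schur's lemma are immediate. In particular one has $\dim\End_{G_0}(\pi|_{G_0}) = \sum_{i} m_i^2$, so that (i) amounts to $\sum_i m_i^2 = 1$ and (ii) to $\sum_i m_i^2 = 2$ with two distinct summands.

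The key identity to establish is
$$\sum_{i=1}^{r} m_i^2 \;=\; \#\bigl\{\chi \in \widehat{G/G_0} \,:\, \chi \otimes \pi \cong \pi\bigr\}. \qquad (\ast)$$
To prove $(\ast)$ I use the conjugation action of $G$ on $\End_{G_0}(\pi|_{G_0})$; since $G_0$ acts trivially this is a representation of the finite abelian group $G/G_0$, and therefore splits as a direct sum of $\chi$-isotypic lines indexed by $\chi \in \widehat{G/G_0}$. A direct unwinding shows that $\varphi$ lies in the $\chi$-isotypic component, i.e.\ satisfies $\pi(g)\varphi\pi(g)^{-1} = \chi(g)\varphi$ for all $g \in G$, if and only if $\varphi \in \Hom_G(\pi, \chi \otimes \pi)$. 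Since $\pi$ and $\chi\otimes\pi$ are both irreducible admissible, Schur's lemma forces $\dim \Hom_G(\pi, \chi \otimes \pi) \in \{0,1\}$, with value $1$ precisely when $\chi \otimes \pi \cong \pi$; summing over $\chi$ yields $(\ast)$.

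The two statements now follow by elementary arithmetic. Under hypothesis (i) the right-hand side of $(\ast)$ equals $1$, so $\sum_i m_i^2 = 1$ forces $r=1$ and $m_1=1$, whence $\pi|_{G_0}$ is irreducible. Under hypothesis (ii) the right-hand side equals $2$ (contributions from the trivial character and from $\chi_0$), and the unique way to write $2$ as a sum of positive integer squares is $1^2+1^2$; hence $r=2$, $m_1=m_2=1$, and $\pi|_{G_0} = \pi^{(+)} \oplus \pi^{(-)}$ with $\pi^{(+)} \not\cong \pi^{(-)}$. The only point requiring care is the identification of the $\chi$-isotypic component of the conjugation representation with $\Hom_G(\pi, \chi\otimes\pi)$, but this is a formal check. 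No genuine obstacle arises and the argument is independent of the specific structure of $G = \Orm(1,n+1)$ beyond the shape of the component group.
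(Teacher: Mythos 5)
Your argument is correct, and it is the standard Clifford-theoretic proof; the paper itself gives no proof of this lemma, deferring to \cite[Lemma~15.2]{kobayashi_speh_2018}, which rests on the same circle of ideas. The crucial identity $(\ast)$, namely $\dim\End_{G_0}(\pi|_{G_0})=\#\{\chi\in\widehat{G/G_0}:\chi\otimes\pi\cong\pi\}$, is exactly right: the conjugation action of the finite abelian group $G/G_0$ on $\End_{G_0}(\pi|_{G_0})$ decomposes it into $\chi$-eigenspaces, each identified with $\Hom_G(\pi,\chi\otimes\pi)$, which by admissible-Schur has dimension $0$ or $1$. (In the identification one implicitly uses $\chi=\chi^{-1}$, true here since all $\chi\in\widehat{G/G_0}$ are quadratic; more generally one could sum over $\chi^{-1}$ instead.) The arithmetic $1=1^2$ and $2=1^2+1^2$ then settles both cases. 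Two small points you assert but do not justify, both standard: semisimplicity and finite length of $\pi|_{G_0}$ follow from Clifford's theorem (or from $\pi\hookrightarrow\Ind_{G_0}^G\sigma$ for $\sigma$ a constituent of $\pi|_{G_0}$, whose restriction to $G_0$ is a sum of $G$-conjugates of $\sigma$), and the phrase ``isotypic lines'' should really be ``isotypic subspaces of dimension at most one,'' as you in fact note in the next clause. Your observation that the argument depends on $G$ only through the shape of the component group is correct.
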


In the following we denote by $$\bar{\pi}_{p,\lambda}^\pm:= \pi_{p,\lambda}^\pm|_{G_0}$$ the restriction to the identity component.
We immediately obtain the following.

\begin{lemma}
\begin{enumerate}[label=(\roman{*})]
\item If $p \neq \frac{n}{2}$, the restriction $\bar{\pi}_{p,\lambda}^\pm$ is irreducible as $G_0$-representation if and only if $\pi_{p,\lambda}^\pm$ is irreducible as a $G$-representation. If $\pi_{p,\lambda}^\pm$ is reducible, the composition series of $\bar{\pi}_{p,\lambda}^\pm$ is given by the composition factors of $\pi_{p,\lambda}^\pm$ restricted to $G_0$.
\item If $p=\frac{n}{2}$, the restriction $\bar{\pi}_{p,\lambda}^\pm=\bar{\pi}_{p,\lambda}^{\pm(+)}\oplus \bar{\pi}_{p,\lambda}^{\pm(-)}$ is always reducible and decomposes into two non-isomorphic $G_0$-representations $\bar{\pi}_{p,\lambda}^{\pm(+)}$ and $\bar{\pi}_{p,\lambda}^{\pm(-)}$. The representations $\bar{\pi}_{p,\lambda}^{\pm(\pm)}$ are irreducible if and only if $\pi_{p,\lambda}^{\pm}$ is irreducible. If $\pi_{p,\lambda}^{\pm}$ is reducible, the composition series of $\bar{\pi}_{p,\lambda}^{\pm(\pm)}$ is given by the composition factors of $\pi_{p,\lambda}^{\pm}$ restricted to $G_0$, which are contained in $\bar{\pi}_{p,\lambda}^{\pm(\pm)}$.
\end{enumerate}
\end{lemma}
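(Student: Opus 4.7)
The plan is to identify $\bar{\pi}_{p,\lambda}^\pm$ as a $G_0$-induced representation via Mackey's formula, and then to apply Lemma~\ref{C:lemma:restriction_reducible_characters} either to $\pi_{p,\lambda}^\pm$ itself when it is irreducible, or to each of its irreducible composition factors when it is reducible.

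First I would observe that, since $A$ and $N$ are connected and $M_0=\{1\}\times\SO(n)$, one has $P\cap G_0 = M_0AN=:P_0$, and since every connected component of $G$ meets $M\subset P$, one has $G=G_0P$. Mackey's restriction formula then yields
$$\bar{\pi}_{p,\lambda}^\pm \;=\; \Ind_{P_0}^{G_0}\bigl((\xi\otimes e^\lambda\otimes\mathbf{1})|_{P_0}\bigr) \;=\; \Ind_{P_0}^{G_0}\bigl(\bigwedge\nolimits^p(\C^n)|_{\SO(n)}\otimes e^\lambda\bigr),$$
which in particular does not depend on the sign in $\alpha$.

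For part (i), with $p\neq\frac{n}{2}$, the $\SO(n)$-representation $\bigwedge\nolimits^p(\C^n)|_{\SO(n)}$ is irreducible, so the right hand side above is a single $G_0$-principal series. Using the formulas $\chi_{+,-}\otimes\pi_{p,\lambda}^\pm \cong \pi_{n-p,\lambda}^\pm$, $\chi_{-,+}\otimes\pi_{p,\lambda}^\pm \cong \pi_{p,\lambda}^\mp$ and $\chi_{-,-}\otimes\pi_{p,\lambda}^\pm\cong\pi_{n-p,\lambda}^\mp$ from the preceding discussion in Section~\ref{C:sec:component_group}, none of the non-trivial characters can fix $\pi_{p,\lambda}^\pm$: $\chi_{+,-}$ replaces $p$ by $n-p\neq p$, while $\chi_{-,+}$ and $\chi_{-,-}$ flip the sign of $\alpha$. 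Hence Lemma~\ref{C:lemma:restriction_reducible_characters}(i) gives the irreducibility of $\bar{\pi}_{p,\lambda}^\pm$ in the irreducible case, and the same argument applied to each irreducible composition factor $\sigma$ of a reducible $\pi_{p,\lambda}^\pm$ (whose $K$-type support distinguishes it from $\chi\otimes\sigma$ through the $\bigwedge\nolimits^p$ versus $\bigwedge\nolimits^{n-p}$ component and the $\alpha$-sign) gives the claimed composition series by exactness of restriction.

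For part (ii), with $p=\frac{n}{2}$, the representation $\bigwedge\nolimits^{n/2}(\C^n)|_{\SO(n)}$ splits into the two non-isomorphic self-dual and anti-self-dual irreducible $\SO(n)$-components, and exactness of induction already produces the direct sum decomposition $\bar{\pi}_{p,\lambda}^\pm = \bar{\pi}_{p,\lambda}^{\pm(+)}\oplus\bar{\pi}_{p,\lambda}^{\pm(-)}$ into two inequivalent $G_0$-principal series, distinguished by their minimal $K_0$-types. Now $\chi_{+,-}$ fixes $\pi_{p,\lambda}^\pm$ because $n-p=p$, whereas $\chi_{-,+}$ and $\chi_{-,-}$ still flip the sign; Lemma~\ref{C:lemma:restriction_reducible_characters}(ii) applied to $\pi_{p,\lambda}^\pm$ in the irreducible case, or to each of its irreducible composition factors in the reducible case, then gives the irreducibility of each summand, their non-equivalence, and the distribution of composition factors among $\bar{\pi}_{p,\lambda}^{\pm(+)}$ and $\bar{\pi}_{p,\lambda}^{\pm(-)}$.

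The hard part will be the passage from irreducible $\pi_{p,\lambda}^\pm$ to the reducible case: one must verify that no irreducible composition factor $\sigma$ acquires additional character symmetries beyond those of $\pi_{p,\lambda}^\pm$ itself. This reduces to a statement about the $K$-type supports of composition factors and will rely on the explicit reducibility analysis carried out in Section~\ref{C:sec:composition_series}.
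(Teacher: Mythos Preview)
Your approach is correct and coincides with the paper's: the lemma is stated with the preamble ``We immediately obtain the following'' and is meant as a direct application of Lemma~\ref{C:lemma:restriction_reducible_characters} together with the character formulas $\chi\otimes\pi_{p,\lambda}^\pm$ computed just before it. Your Mackey-theory identification of $\bar{\pi}_{p,\lambda}^\pm$ as a $G_0$-principal series and your explicit flagging of the composition-factor check in the reducible case add detail the paper omits, but the core argument is the same.
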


We use the corresponding notation $\bar{\tau}_{q,\nu}^\pm$ and $\bar{\tau}_{q,\nu}^{\pm(\pm)}$ for the components of the restriction to $G'_0\cong \SO_0(1,n)$.

\section{Composition series of \texorpdfstring{$\pi_{p,\lambda}^\pm$}{pi}}\label{C:sec:composition_series}
We recall results about the composition series of $\pi_{p,\lambda}^\pm$ and give explicit realizations as kernels of the standard Knapp--Stein intertwining operators.
\subsection{Irreducibility of principal series representations}

\begin{theorem}[\cite{kobayashi_speh_2018} Theorem~2.18]
$\pi_{p,\lambda}^\pm$ is reducible if and only if  $$\lambda\in (-\rho-1-\Z_{\geq 0}) \cup (\rho+1+\Z_{\geq 0})\cup\{ \rho-p, -\rho+p\}.$$
\end{theorem}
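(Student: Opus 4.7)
The plan is to read off reducibility from the standard Knapp--Stein intertwining operator
\[
T_\lambda^\pm : \pi_{p,\lambda}^\pm \longrightarrow \pi_{p,-\lambda}^\pm,
\]
defined on the non-compact picture by the meromorphically continued integral $T_\lambda^\pm u(X)=\int_{\nbar} \xi^{-1}(m(\tilde w_0 \bar n_Y))\,\abs{Y}^{-2(\lambda+\rho)}\,u(X-\sigma(Y))\,dY$ against the kernel given by Proposition \ref{C:prop:longest_weyl_group_element_action}. Since $\pi_{p,\lambda}^\pm$ has an infinitesimal character that becomes non-regular precisely at the candidate set, the standard argument that a non-zero intertwining operator exists in both directions at generic $\lambda$ combined with the multiplicity-one Theorem~\ref{C:theorem:mult_one} (applied to $G$ as a subgroup of itself, or more precisely the classical Bruhat-theoretic analogue for $G'=G$) shows that reducibility is exactly detected by the failure of $T_\lambda^\pm$ to be an isomorphism.

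The first step I would carry out is the $K$-type decomposition. By Frobenius reciprocity together with the branching rule from $K=\Orm(n+1)$ to $M\cong\Orm(1)\times\Orm(n)$, each $K$-type of $\pi_{p,\lambda}^\pm$ is a finite list of irreducible $\Orm(n+1)$-representations $V_\mu$ whose restriction to $\Orm(n)$ contains $\bigwedge\nolimits^p(\C^n)$ and whose $\Orm(1)$-part matches the sign $\pm$. A convenient parameterisation labels these by the pair $(a,b)\in\Z_{\geq0}^2$ of highest weight coordinates on $V_{a,b}:=\bigwedge\nolimits^p(\C^{n+1})\otimes(\text{symmetric correction})$, refined by the sign character. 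Standard calculations (going back to Johnson--Wallach in the scalar case and carried out in the relevant $\bigwedge^p$-case in \cite{kobayashi_speh_2018}) show that $T_\lambda^\pm$ acts on the $K$-type indexed by $(a,b)$ by a scalar of the form
\[
c_\pm(a,b;\lambda) \;=\; C\cdot\frac{\Gamma(a+\lambda+\tfrac{n}{2})\,\Gamma(b-\lambda+\tfrac{n}{2})}{\Gamma(a-\lambda+\tfrac{n}{2})\,\Gamma(b+\lambda+\tfrac{n}{2})}\cdot (\text{polynomial shift depending on } p,\pm),
\]
where the shift is an integer linear in $p$, coming from the $M$-type $\bigwedge\nolimits^p(\C^n)$.

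The second step is to identify zeros and poles. A pole of $T_\lambda^\pm$ at $\lambda_0$ translates, after renormalisation, into a $K$-type on which the operator vanishes, producing a proper invariant submodule; the set of $\lambda$ with at least one vanishing $K$-type but not all vanishing is exactly the reducibility set. Tracking the Gamma-factor zeros through the shifted arguments, one finds poles at $\lambda\in\pm(\rho+1+\Z_{\geq 0})$ (coming from the $(a,b)$ with one coordinate equal to zero, corresponding to the finite-dimensional and Verma-like constituents), and the two extra isolated points $\lambda=\pm(\rho-p)$ coming from the interaction of the $\bigwedge^p$-shift with the other Gamma-factor; at these latter points the composition factor is precisely the cohomological module $\Pi_{p,\pm}$ used throughout the paper.

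The main obstacle is bookkeeping rather than conceptual: one must verify that \emph{outside} the claimed set every $K$-type scalar is non-zero (so $T_\lambda^\pm$ is bijective and $\pi_{p,\lambda}^\pm$ is irreducible), and simultaneously that at every point of the claimed set at least one scalar vanishes while the rest do not, so that the operator has a proper non-zero kernel. The delicate case is $\lambda=\pm(\rho-p)$ with $p\notin\{0,n\}$, where only the $\bigwedge^p$-twist distinguishes this point from the generic locus; here one must check that the shift in the Gamma arguments produces exactly one layer of vanishing $K$-types, matching the cohomological submodule. Once these checks are complete the statement follows.
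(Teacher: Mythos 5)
The statement you are trying to prove is not proved in the paper at all: it is stated as a citation to Kobayashi--Speh \cite[Theorem~2.18]{kobayashi_speh_2018}, so there is no ``paper proof'' to compare against. Your sketch does however follow the method that the cited reference uses (and that the present paper relies on elsewhere, e.g.\ in Sections~5 and~6): diagonalise the normalised Knapp--Stein operator $T_{p,\lambda}$ on $K$-types and read off reducibility from the zeros of its eigenvalues. In that sense the plan is sound.

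There are two concrete gaps in the sketch. First, the logical bridge ``$T_{p,\lambda}$ bijective $\Rightarrow$ $\pi_{p,\lambda}^\pm$ irreducible'' is not supplied by Theorem~\ref{C:theorem:mult_one}. Sun--Zhu's multiplicity-one theorem concerns restriction to a proper subgroup; taking $G'=G$ in it only recovers Schur's lemma for irreducible modules, which assumes what one wants to prove. What is actually needed is Bruhat's generic irreducibility theorem (or the Langlands classification together with the fact that any proper submodule/quotient is detected by a long intertwining operator), and this has to be invoked explicitly rather than folded into a vague appeal to ``the classical Bruhat-theoretic analogue.'' Second, the eigenvalue formula $c_\pm(a,b;\lambda)$ is left as a black box with an unspecified ``polynomial shift.'' The extra reducibility points $\pm(\rho-p)$ are entirely carried by this shift: the Gamma ratios alone would only produce $\pm(\rho+1+\Z_{\geq0})$, and without writing out the factors $(\lambda\pm(\rho-p))$ that appear in the $\bigwedge^p$-twisted eigenvalue you cannot verify that exactly one layer of $K$-types vanishes at $\lambda=\pm(\rho-p)$ nor that nothing vanishes at the nearby non-reducibility points. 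This is precisely the content of the computations in \cite[Chapter~8]{kobayashi_speh_2018} and \cite[Chapter~3.a]{BOO_1996}, which the present paper cites for exactly this purpose in Section~6; until those scalars are written down, the argument does not establish the stated set, only the shape one expects it to have.
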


\subsection{The Knapp--Stein intertwining operator}
The composition series is closely connected to the Knapp--Stein intertwining operators which we introduce in this section.
Following \cite[Chapter 8]{kobayashi_speh_2018} we define the (normalized) Knapp--Stein intertwining operator
$T_{p,\lambda}$ as an element of $\mathcal{D}'(\R^n)\otimes \End_\C(\xi)$ by
$$T_{p,\lambda}=
	\begin{cases}
		\frac{1}{\Gamma(\lambda)}\abs{X}^{2(\lambda-\rho)}\sigma_p(\psi_n(X)) & 					\text{if $p\neq \frac{n}{2}$,} \\
		\frac{1}{\Gamma(\lambda+1)}\abs{X}^{2(\lambda-\rho)}\sigma_p(\psi_n(X)) & 				\text{if $p= \frac{n}{2}$.}
	\end{cases}$$
This defines a non-vanishing holomorphic family of intertwining operators $$T_{p,\lambda}: \pi_{p,\lambda}^\pm \to \pi_{p,-\lambda}^\pm.$$

The composition series of $\pi_{p,\lambda}^\pm$ is given in the following proposition which is \cite[Proposition~2.18 and Proposition~8.17]{kobayashi_speh_2018}

\begin{prop}
If  $\pi_{p,\lambda}^\pm$ is reducible it has composition series of length two and if additionally $\lambda \neq 0$ it has a unique irreducible submodule given by $\ker T_{p,\lambda}$. If $\lambda=0$ and $\pi_{p,\lambda}^\pm$ is reducible (hence $p=\frac{n}{2}$) it decomposes into the direct sum of two irreducible representations which are given by $$\ker\left(T_{p,0}\pm \frac{\pi^p}{p!} \operatorname{id}\right).$$
\end{prop}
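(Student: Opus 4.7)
My plan is to deduce both assertions from the functional equation satisfied by the normalized Knapp--Stein family and from a general structural result on rank-one principal series.

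First, I would establish the functional equation
\[
 T_{p,-\lambda}\circ T_{p,\lambda} = c(\lambda)\cdot \operatorname{id}
\]
as a meromorphic identity in $\lambda$, with $c(\lambda)$ an explicit ratio of Gamma functions determined by a one-variable integral after restricting to the $K$-finite vectors. This is the standard computation using the non-compact picture: convolving the two kernels $|X|^{2(\lambda-\rho)}\sigma_p(\psi_n(X))$ against each other reduces, thanks to the identity $\psi_n(X)^2=\operatorname{id}$, to the classical Riesz composition formula on $\R^n$. The normalization by $\Gamma(\lambda)^{-1}$ (resp.\ $\Gamma(\lambda+1)^{-1}$ in the case $p=n/2$) is precisely the one that makes $T_{p,\lambda}$ non-vanishing and entire in $\lambda$.

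Next, the fact that the composition series has length at most two I would obtain from an analysis of $K$-types: by Frobenius reciprocity the $K$-spectrum of $\pi_{p,\lambda}^\pm$ is a multiplicity-free sum of $\Orm(n+1)$-types $\bigwedge^p(\C^{n+1})\otimes S^k$-like components, and the action of the Casimir together with the standard recursion for the Knapp--Stein operator on each $K$-type partitions them into two blocks at every reducibility point. Assuming this, for $\lambda\neq 0$ at a reducible parameter the operator $T_{p,\lambda}$ fails to be invertible (since otherwise $c(\lambda)\neq 0$ would force irreducibility via Schur and the inverse), so $\ker T_{p,\lambda}$ is a proper non-zero submodule. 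Length two then forces $\ker T_{p,\lambda}$ to be irreducible, and the functional equation gives that the image is $\ker T_{p,-\lambda}$, so the socle of $\pi_{p,\lambda}^\pm$ is exactly $\ker T_{p,\lambda}$, giving uniqueness.

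For $\lambda=0$ with $p=n/2$, the target of $T_{p,0}$ coincides with its source, so the functional equation becomes $T_{p,0}^{2}=c(0)\operatorname{id}$. One computes $c(0)$ by evaluating both sides on a single convenient vector, for instance the minimal $K$-type vector realized in the compact picture, where the spherical function reduces everything to a beta-integral on the sphere $S^{n-1}$; a direct calculation yields $c(0)=(\pi^{p}/p!)^{2}$. Consequently $T_{p,0}$ has the two distinct eigenvalues $\pm \pi^{p}/p!$, and $\pi_{p,0}^\pm$ splits as the direct sum of the corresponding eigenspaces $\ker\!\bigl(T_{p,0}\mp \tfrac{\pi^{p}}{p!}\operatorname{id}\bigr)$; each summand is non-zero and $G$-stable, and by the length-two statement above both are irreducible.

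The main obstacle is the pinning down of the constant in the last step: although the functional equation itself is formal, identifying $c(0)$ with $(\pi^p/p!)^2$ requires a concrete evaluation on a minimal $K$-type, which in turn demands a careful tracking of the intertwining between $\bigwedge^{p}(\C^n)$ and $\bigwedge^{p}(\C^{n+1})|_{\Orm(n)}$ together with standard Gamma-function identities. Everything else is either a softer dimension/length argument or a consequence of the fact that the normalized family $T_{p,\lambda}$ is non-vanishing and entire.
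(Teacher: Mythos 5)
The paper gives no proof of its own for this proposition; it simply cites \cite[Proposition~2.18 and Proposition~8.17]{kobayashi_speh_2018}. Your sketch follows the same basic strategy as that reference (functional equation for the Knapp--Stein operator plus a $K$-type analysis), so the two are not genuinely different routes. There are, however, two concrete gaps in your write-up that would need to be filled for a complete argument.

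First, the inference ``for $\lambda\neq 0$ at a reducible parameter the operator $T_{p,\lambda}$ fails to be invertible, since otherwise $c(\lambda)\neq 0$ would force irreducibility via Schur and the inverse'' is unsound as stated. Invertibility of $T_{p,\lambda}$ does not preclude reducibility of $\pi_{p,\lambda}^\pm$; it only says $\pi_{p,\lambda}^\pm\cong\pi_{p,-\lambda}^\pm$, and an isomorphism can certainly carry a proper submodule of one reducible module to a proper submodule of the other. What actually shows $\ker T_{p,\lambda}\neq 0$ at a reducibility point is the explicit computation of the $T_{p,\lambda}$-eigenvalue on each $K$-type (the ratio-of-Gamma-functions formula) together with the observation that at the reducibility parameters these eigenvalues vanish on precisely the $K$-types of the expected socle. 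You invoke this $K$-type recursion later for the length-two statement, so you need it anyway; the ``Schur plus inverse'' shortcut should be replaced by it.

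Second, in the case $\lambda=0$, $p=\tfrac{n}{2}$, you deduce $T_{p,0}^2=(\pi^p/p!)^2\operatorname{id}$ and conclude that $\pi_{p,0}^\pm$ splits into the two eigenspaces, asserting ``each summand is non-zero'' without justification. A priori $T_{p,0}$ could be the scalar $+\pi^p/p!\cdot\operatorname{id}$ (or its negative), in which case one of the putative summands would be zero. One clean way to rule this out: since $\bigwedge^{n/2}(\C^n)\cong\bigwedge^{n/2}(\C^n)\otimes\det$, twisting by the global character $\chi_{+,-}$ (whose restriction to $M$ is $\mathbf{1}\otimes\det$) gives a $G$-automorphism $\iota$ of $\pi_{n/2,0}^\pm$. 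Because $\det\psi_n(X)=-1$, this automorphism sends the kernel $\sigma_{n/2}(\psi_n(X))$ of $T_{n/2,0}$ to $-\sigma_{n/2}(\psi_n(X))$, i.e.\ $\iota\circ T_{n/2,0}\circ\iota^{-1}=-T_{n/2,0}$. Hence $\iota$ interchanges the two eigenspaces, which are therefore either both zero or both non-zero; since $T_{n/2,0}\neq 0$ they are both non-zero. Alternatively one may simply exhibit two $K$-types on which the eigenvalues of $T_{n/2,0}$ have opposite sign. Either way, this step must be supplied.

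Finally, you explicitly say ``Assuming this'' about the length-two claim. That is the technical heart of the proposition (and the part the paper is really borrowing from Kobayashi--Speh); as it stands your proposal reduces the statement to that assumption rather than proving it.
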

Similarly we denote by $T'_{q,\nu}$ the Knapp--Stein intertwining operator for the subgroup $G'$, similarly defined and normalized.

\section{Unitary representations in the principal series of \texorpdfstring{$\Orm(1,n+1)$}{O(1,n+1)}}\label{C:sec:unitary_composition}
\label{C:sec:unitary_reps}
The principal series representation $\pi_{p,\lambda}^\pm$ is the smooth vectors of a tempered unitary representation if and only if $\lambda\in i\R$ and hence unitarizable (unitary principal series) and we denote its unitary closure by $\hat{\pi}_{p,\lambda}^\pm$. But for real parameters the principal series representation might be non-tempered unitarizable and irreducible (complementary series) or contain unitarizible composition factors which might be tempered or not.
\subsection{Criterion for unitarizability}
\begin{lemma}[See \cite{kobayashi_speh_2018} Example 3.32]
$\pi_{p,\lambda}^\pm$ is a complementary series representation if and only if
$$\lambda \in (-\abs{\rho-p},\abs{\rho-p}).$$
\end{lemma}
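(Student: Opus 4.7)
The plan is to construct a $G$-invariant Hermitian form on $\pi_{p,\lambda}^\pm$ from the Knapp--Stein intertwiner of Section~\ref{C:sec:composition_series}, and then determine precisely when this form is positive definite by decomposing into $K$-types. First I would observe that the Hermitian dual of $\pi_{p,\lambda}^\pm$ is isomorphic to $\pi_{p,-\bar\lambda}^\pm$, since the inducing $M$-representation $\xi=\alpha\otimes\sigma_p$ is unitary and self-dual (being defined over $\R$). Hence for real $\lambda$, the normalized intertwiner $T_{p,\lambda}:\pi_{p,\lambda}^\pm\to\pi_{p,-\lambda}^\pm$ yields a $G$-invariant sesquilinear pairing
$$\langle f_1,f_2\rangle_\lambda \ =\ \int_{K/M}\bigl\langle T_{p,\lambda}f_1(k),\,f_2(k)\bigr\rangle_{\sigma_p}\,dk,$$
which is Hermitian because the $\Gamma$-function normalization of Section~\ref{C:sec:composition_series} is precisely the one making $T_{p,\lambda}$ self-adjoint with respect to the standard pairing of $\pi_{p,\lambda}^\pm$ with $\pi_{p,-\lambda}^\pm$; this self-adjointness follows from the functional equation to be recalled in Section~\ref{C:sec:func_equations}.

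Second, for $\lambda$ in the open real interval $(-\rho,\rho)$ minus the discrete points $\{\pm(\rho-p)\}$, the representation $\pi_{p,\lambda}^\pm$ is irreducible by the results of Section~\ref{C:sec:composition_series}, so by Schur's lemma the Hermitian form above is unique up to a real scalar. It therefore suffices to determine its signature on each $K$-type. By Frobenius reciprocity the $K$-types of $\pi_{p,\lambda}^\pm$ are the irreducible representations of $K=\Orm(1)\times\Orm(n+1)$ whose restriction to $M=\Orm(1)\times\Orm(n)$ contains $\alpha\otimes\sigma_p$; by the $\Orm(n+1)\downarrow\Orm(n)$ branching rule for $\bigwedge\nolimits^p(\C^n)$ each such $K$-type occurs with multiplicity one, so $T_{p,\lambda}$ acts on each of them by a scalar $c_\mu(\lambda)$.

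Third, I would compute these scalars $c_\mu(\lambda)$ by testing $T_{p,\lambda}$ on a highest-weight vector of each $K$-type. Using the Cartan decomposition $K=K'\cdot S^n$ and the Funk--Hecke identity on the sphere, the integral kernel $\abs{X}^{2(\lambda-\rho)}\sigma_p(\psi_n(X))/\Gamma(\lambda)$ reduces to a one-dimensional gamma-type integral whose value, in the chosen normalization, is a ratio of $\Gamma$-factors that is holomorphic and strictly positive at $\lambda=0$. A recursion on the $K$-type parameter then shows that the ratio $c_{\mu'}(\lambda)/c_\mu(\lambda)$ between adjacent $K$-types is a linear function of $\lambda$ of the form $(\rho-p-\lambda)\cdot(\text{positive constant})$ or $(\rho-p+\lambda)\cdot(\text{positive constant})$, with no further sign changes on $(-\rho,\rho)$. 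This is the explicit computation performed in Kobayashi--Speh and is the main technical point.

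Finally, positivity: all $c_\mu(\lambda)$ carry the same sign exactly when both $\rho-p-\lambda>0$ and $\rho-p+\lambda>0$, i.e.\ $\lambda\in(-\abs{\rho-p},\abs{\rho-p})$, which gives the stated range. At $\abs{\lambda}=\abs{\rho-p}$ one factor vanishes, consistent with the reducibility point identified in Section~\ref{C:sec:composition_series}; for $\abs{\lambda}$ beyond this range the first sign change makes the form indefinite on the full irreducible representation, so $\pi_{p,\lambda}^\pm$ is non-unitarizable there. The main obstacle is the bookkeeping of the $c_\mu(\lambda)$ across all $K$-types and the verification that the linear factor $(\rho-p\pm\lambda)$ is the first to change sign; this is a finite computation at each step of the $K$-type recursion but requires careful organization using the $\bigwedge^p$-branching structure.
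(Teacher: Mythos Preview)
The paper does not supply a proof of this lemma: it is stated with the citation ``See \cite{kobayashi_speh_2018} Example~3.32'' and no argument is given. There is therefore nothing in the paper to compare your proposal against directly.

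That said, your outline is exactly the standard route by which such complementary-series ranges are determined, and it is the argument underlying the cited reference: build the invariant Hermitian form from the normalized Knapp--Stein operator, use multiplicity-one of the $K$-types (Frobenius reciprocity plus the $\Orm(n+1)\downarrow\Orm(n)$ branching for $\bigwedge^p$), and track the signs of the eigenvalues $c_\mu(\lambda)$ as $\lambda$ moves along the real axis. The paper itself invokes precisely this kind of $K$-type eigenvalue analysis later (the proposition citing \cite[Chapter~8.3.2]{kobayashi_speh_2018} and \cite[Chapter~3.a]{BOO_1996}), so your strategy is fully in line with what the author relies on.

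One small imprecision: the ratio $c_{\mu'}(\lambda)/c_\mu(\lambda)$ between adjacent $K$-types is not literally a linear function of $\lambda$ but a rational function of the shape $\dfrac{a-\lambda}{a+\lambda}$ (up to a positive constant), with $a$ depending on the $K$-type parameter. The first sign change as $|\lambda|$ increases from $0$ still occurs at $|\lambda|=|\rho-p|$, so your conclusion is unaffected; just be careful with the form of these factors when you carry out the computation. You should also make explicit the dichotomy $p<\tfrac{n}{2}$ versus $p>\tfrac{n}{2}$, since (as the paper notes) $T_{p,\lambda}$ acts with a global sign depending on this, which is why one writes $|\rho-p|$ rather than $\rho-p$.
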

The following Proposition can easily be deduced using the calculations in \cite[Chapter~3.a]{BOO_1996}.
\begin{prop}
Let $p\neq 0,\frac{n}{2},n$.
$\pi_{p,\lambda}^\pm$ contains a unitarizable composition factor if and only if $\lambda \in \{ \rho-p,p-\rho\}$. In this case both the unique submodule as well as the unique quotient are unitarizable.
If $p=\frac{n}{2}$, $\pi_{p,\lambda}^\pm$ contains a unitarizable composition factor if and only if $\lambda=0$. In this case both submodules are unitarizable.
If $p=0,n$, $\pi_{p,\lambda}^\pm$ contains a unitarizable composition factor if and only if $\lambda\in \{ \pm(\rho+j), \, j\in \Z_{\geq 0} \}$. If $\lambda=-\rho-j$, the quotient $\pi_{p,\lambda}^\pm/ \ker T_{p,\lambda}$ is unitarizable and if $\lambda=\rho+j$ the submodule $\ker T_{p,\lambda}$ is unitarizable. Only in the special case $\lambda=\pm\rho$, also the other composition factor is unitarizable and one dimensional in this case.
\end{prop}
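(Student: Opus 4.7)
The plan is to analyze unitarizability of composition factors of $\pi_{p,\lambda}^\pm$ for real $\lambda$ through the $G$-invariant Hermitian form induced by the normalized Knapp--Stein intertwiner $T_{p,\lambda}\colon\pi_{p,\lambda}^\pm\to\pi_{p,-\lambda}^\pm$. Composing $T_{p,\lambda}$ with the canonical duality pairing $\pi_{p,-\lambda}^\pm\times\pi_{p,\lambda}^\pm\to\C$ yields a Hermitian form $\langle\cdot,\cdot\rangle_\lambda$ on $\pi_{p,\lambda}^\pm$ whose radical equals $\ker T_{p,\lambda}$; an irreducible composition factor is unitarizable precisely when this form descends to a definite form on it. Since $T_{p,\lambda}$ is $K$-equivariant and the $K$-spectrum of $\pi_{p,\lambda}^\pm$ is multiplicity-free, $T_{p,\lambda}$ acts on each $K$-type $\mu$ by a scalar $c_\mu(\lambda)$, and unitarizability reduces to the sign pattern of the family $(c_\mu(\lambda))_\mu$ on the appropriate set of $K$-types.

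The explicit eigenvalues $c_\mu(\lambda)$ are computed in \cite[Chapter~3.a]{BOO_1996} as products of linear factors in $\lambda$ of the form $\lambda+\alpha_\mu^{(i)}$, modulo the global $\Gamma$-normalization built into $T_{p,\lambda}$. Reducibility points are exactly the real $\lambda$ at which such a product vanishes on a nontrivial subfamily of $K$-types, and the two composition factors then correspond to complementary subsets of the $K$-spectrum. The plan is to inspect signs case by case. On the complementary strip $(-\abs{\rho-p},\abs{\rho-p})$ every $c_\mu$ stays positive, being positive at $\lambda=0$ with no interior zeros. At the cohomological points $\lambda=\pm(\rho-p)$ with $p\neq 0,\frac{n}{2},n$, a common linear factor vanishes on the $K$-types of $\ker T_{p,\lambda}$, while the remaining $c_\mu$ on the quotient are positive by continuity from the strip; dividing out the vanishing factor near $\pm(\rho-p)$ exhibits a renormalized operator whose eigenvalues on the kernel $K$-types are again all of one sign, so both the submodule and the quotient are unitarizable. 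For $p=\frac{n}{2}$ at $\lambda=0$ the shifted normalization $\Gamma(\lambda+1)$ makes $T_{p,0}$ a nonzero $K$-equivariant involution whose two eigenspaces are the two composition factors, each carrying a positive form. For $p=0,n$ the $K$-spectrum is the spherical one-parameter family; at $\lambda=\mp(\rho+j)$ the finite-dimensional composition factor is spanned by the low-degree $K$-types, which carry a single-signed form, whereas the infinite-dimensional complement carries an indefinite form for $j\geq 1$ and the positive form of the complementary endpoint for $j=0$.

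The main obstacle is this last case. For $p=0,n$ and $j\geq 1$ one must exhibit two $K$-types in the infinite-dimensional composition factor at $\lambda=\mp(\rho+j)$ whose renormalized $c_\mu$-values have opposite signs, and simultaneously show that at $j=0$ no such sign change occurs. Both assertions follow directly from the explicit product formulas of \cite[Chapter~3.a]{BOO_1996}: for $j\geq 1$ the infinite-dimensional factor contains $K$-types on either side of a linear factor that flips sign at $\lambda=\mp(\rho+j)$, while for $j=0$ the $K$-types responsible for a potential sign change all lie in the finite-dimensional complement (which is the trivial representation), leaving no sign change on the infinite-dimensional factor. This is precisely the sense in which the proposition is an immediate deduction from the BOO calculation, and in which the exceptional role of $\lambda=\pm\rho$ emerges naturally from the sign-counting.
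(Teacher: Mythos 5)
Your overall framework is the right one---the Hermitian form $\langle f,T_{p,\lambda}\bar g\rangle_{L^2(K)}$ (or its renormalization) on a multiplicity-free $K$-spectrum, with signs of the scalars $c_\mu(\lambda)$ read off from \cite[Chapter~3.a]{BOO_1996}; this is exactly what the paper invokes. The first three cases (complementary strip, $\lambda=\pm(\rho-p)$ for $p\neq 0,\tfrac n2,n$, and $p=\tfrac n2$ at $\lambda=0$, where the representation is already a unitary principal series and no further argument is needed) are fine.

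However, the case you flag as the ``main obstacle'', $p=0,n$, contains a genuine error: you have the roles of the two composition factors reversed. Per the paper's own statement in Section~6 (``For $p=0$ and $\lambda\in(-\rho-1-\Z_{\geq 0})$, the Knapp--Stein operator $T_{0,\lambda}$ acts by non-negative scalars on all $K$-types in $\pi_{0,\lambda}^\pm$''), and likewise non-positive for $p=n$, the Knapp--Stein eigenvalues are of a \emph{single} sign on the entire module at $\lambda=-\rho-j$; they vanish exactly on the $K$-types of the finite-dimensional kernel $F_{p,j,\pm}=\ker T_{p,\lambda}$. Thus the form $\langle\cdot,T_{p,\lambda}\cdot\rangle$ is semi-definite with radical $F_{p,j,\pm}$ and descends to a definite form on the infinite-dimensional quotient $I_{p,j,\pm}$, which is therefore unitarizable for \emph{every} $j\geq 0$---precisely the claim of the proposition. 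There is no sign change on the infinite-dimensional factor at any $j$. The finite-dimensional factor, on the other hand, lies entirely in the radical, and its unitarizability must be tested against the renormalized (residual) form; for $j\geq 1$ it is a non-trivial finite-dimensional representation of a non-compact simple group, hence non-unitary, while for $j=0$ it is the one-dimensional character $\Pi_{0,\pm}$ (resp.\ $\Pi_{n+1,\pm}$), which is trivially unitary. Your last paragraph asserts the opposite: that the finite-dimensional piece carries a single-signed form and the infinite-dimensional complement is indefinite for $j\geq 1$. As written, that would make $I_{p,j,\pm}$ non-unitarizable for $j\geq 1$, contradicting the statement you are trying to prove. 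The fix is to swap the two factors in your sign analysis and to note that the exceptional behaviour at $j=0$ concerns unitarizability of the \emph{finite-dimensional} (here one-dimensional) factor, not the infinite-dimensional one.

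Two smaller points. For $p=\tfrac n2$ at $\lambda=0$ the argument can be shortened: $\pi_{n/2,0}^\pm$ is already a unitary principal series, so every composition factor is automatically unitarizable; the involution picture is unnecessary (and $T_{n/2,0}$ is only an involution after rescaling by $(\tfrac n2)!/\pi^{n/2}$). And it would strengthen the write-up to note explicitly that the set of reducibility points is $(-\rho-1-\Z_{\geq0})\cup(\rho+1+\Z_{\geq0})\cup\{\rho-p,p-\rho\}$, and that for $p\neq 0,\tfrac n2,n$ unitarizability fails at every point of the first two families---again by a sign change in the $c_\mu$, this time on both factors.
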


\subsection{Composition factors with non-trivial \texorpdfstring{$(\mathfrak{g},K)$}{(g,K)}-cohomology}
We introduce notation for the unitarizible composition factors. For $p\neq \frac{n}{2}$, $0< p<n$
let $\Pi_{p,\pm}$ be the unique proper submodule of $\pi_{p,p-\rho}^\pm$ and for $p=\frac{n}{2}$ let $$\Pi_{\frac{n}{2},\pm}:=\ker\left(T_{\frac{n}{2},0}- \frac{\pi^\frac{n}{2}}{(\frac{n}{2})!} \operatorname{id}\right)\subseteq \pi_{\frac{n}{2},0}^\pm.$$
Moreover let $$\Pi_{0,+}:=\chi_{+,+}, \qquad \Pi_{0,-}:=\chi_{-,+}, \qquad \Pi_{n+1,+}:=\chi_{+,-}, \qquad \Pi_{n+1,-}:=\chi_{-,-}.
$$
These four one-dimensional representations correspond to the unique finite dimensional unitarizable composition factors for $p=0,n$.

The following results are all due to \cite[Theorem~2.20]{kobayashi_speh_2018}
\begin{theorem}
\begin{enumerate}[label=(\roman{*}), ref=\thetheorem(\roman{*})]
\item  \label{C:theorem:triv_inf_character:seq}
For $0 \leq p\leq n$ we have the following exact sequences of $G$-modules.
$$0 \to\Pi_{p,\pm} \to \pi_{p,p-\rho}^\pm\to \Pi_{p+1,\mp}\to 0,$$
$$0 \to\Pi_{p+1,\mp} \to \pi_{p,\rho-p}^\pm\to \Pi_{p,\pm}\to 0.$$
These sequences split if and only if $p=\frac{n}{2}$.

\item The set $\{\Pi_{p,\alpha}, \, 0\leq p\leq n+1, \, \alpha=+,-\}$ characterizes exactly all irreducible smooth admissible $G$-representations whose infinitesimal character coincides with the infinitesimal character of the trivial representation (\textit{trivial infinitesimal character}).

\item All irreducible and unitarizable $(\mathfrak{g},K)$-modules with non-trivial $(\mathfrak{g},K)$-cohomology are exactly given by the underlying $(\mathfrak{g},K)$-modules of the elements of $\{\Pi_{p,\alpha}, \, 0\leq p\leq n+1, \, \alpha=+,-\}$.

\item The set of irreducible tempered representations of $G$ with trivial infinitesimal character is given for $n$ even by
$$\{ \Pi_{\frac{n}{2},\alpha}, \Pi_{\frac{n}{2}+1,\alpha}, \, \alpha=+,- \}$$
and for $n$ odd by
$$\{ \Pi_{\frac{n+1}{2},\alpha},\, \alpha=+,- \}.$$
In the odd case both representations are discrete series representations.
\item \label{C:theorem:triv_inf_character:isos}
We have the following isomorphisms of $G$-modules
$$
\chi_{+,-}\otimes\Pi_{p,\pm} \cong \Pi_{n+1-p,\mp}, \qquad \chi_{-,+}\otimes\Pi_{p,\pm} \cong \Pi_{p,\mp}, \qquad \chi_{-,-}\otimes\Pi_{p,\pm} \cong \Pi_{n+1-p,\pm}.
$$
\end{enumerate}
\end{theorem}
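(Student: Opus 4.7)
The plan is to assemble all five assertions from the composition-series analysis of Section~\ref{C:sec:composition_series}, the character-twist identities of Section~\ref{C:sec:component_group}, and two external inputs: the Vogan--Zuckerman classification of cohomological unitary modules and the standard description of tempered irreducibles for rank-one reductive groups.

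I would start with (i). At $\lambda = p-\rho$ with $p\neq n/2$, the proposition of Section~\ref{C:sec:composition_series} gives a length-two composition series of $\pi_{p,p-\rho}^\pm$ with irreducible submodule $\ker T_{p,p-\rho}=\Pi_{p,\pm}$ by definition of $\Pi_{p,\pm}$. Both composition factors carry the trivial infinitesimal character, since $\lambda=p-\rho$ translates via Harish-Chandra to a Weyl-conjugate of the parameter of the trivial representation. To match the quotient with $\Pi_{p+1,\mp}$ I would compute $K$-types via Frobenius reciprocity (the $K$-types of $\pi_{p,\lambda}^\pm$ are exactly those $\Orm(n+1)$-types containing $\alpha\otimes\sigma_p$ on restriction to $M$) and check that the minimal $K$-type of the quotient contains $\alpha'\otimes\sigma_{p+1}$ with $\alpha'$ the sign character opposite to $\alpha$, which uniquely identifies it with the submodule of $\pi_{p+1,(p+1)-\rho}^\mp$. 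The second exact sequence then follows from the first by applying $T_{p,p-\rho}:\pi_{p,p-\rho}^\pm\to\pi_{p,\rho-p}^\pm$, whose kernel is $\Pi_{p,\pm}$ and whose image is the unique proper submodule of $\pi_{p,\rho-p}^\pm$. Non-splitness away from $p=n/2$ is witnessed by the $1/\Gamma(\lambda)$ versus $1/\Gamma(\lambda+1)$ normalization of $T_{p,\lambda}$: only at $p=n/2$, $\lambda=0$ does the Knapp--Stein operator produce a non-trivial idempotent, yielding the split decomposition already recorded in Section~\ref{C:sec:composition_series}.

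Assertion (v) is then essentially free. Tensoring the first sequence of (i) by $\chi_{+,-}$ and using $\chi_{+,-}\otimes\pi_{p,\lambda}^\pm\cong\pi_{n-p,\lambda}^\pm$ produces a length-two sequence of $\pi_{n-p,\rho-(n-p)}^\pm$ (note $\rho-(n-p)=p-\rho$ since $2\rho=n$); comparing term-by-term with (i) applied to $n-p$ in place of $p$ forces $\chi_{+,-}\otimes\Pi_{p,\pm}\cong\Pi_{n+1-p,\mp}$, and the remaining two identities follow by the same trick with $\chi_{-,+}$ and $\chi_{-,-}$. For (ii), Casselman's subrepresentation theorem embeds any irreducible admissible module with trivial infinitesimal character into some $\pi_{p,\lambda}^\pm$ with $\lambda\in\{p-\rho,\rho-p\}$; by (i) its composition factors exhaust the list $\{\Pi_{p,\alpha}\}$. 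For (iii) I would invoke Vogan--Zuckerman: irreducible unitary $(\mathfrak{g},K)$-modules with non-vanishing cohomology are exactly the $A_\mathfrak{q}(\lambda)$-modules associated to $\theta$-stable parabolic subalgebras, and a direct enumeration in $\sof(1,n+1)$, combined with the two components of the disconnected $K$, matches them bijectively with $\Pi_{p,\pm}$, $0\le p\le n+1$. Finally (iv) follows from the standard rank-one classification: $\Orm(1,n+1)$ admits genuine discrete series exactly when $n$ is odd (rank $K$ equals rank $G$), producing $\Pi_{(n+1)/2,\pm}$; for $n$ even the two tempered components with trivial infinitesimal character are the $\pm$-eigenspaces of $T_{n/2,0}$ on $\pi_{n/2,0}^\pm$, giving $\Pi_{n/2,\pm}$ and $\Pi_{n/2+1,\mp}$.

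The main obstacle is the precise identification in (i) of the quotient with the submodule of a different principal series. Abstract composition-factor counting is not enough; one either executes the $K$-type bookkeeping in detail via Blattner-type multiplicity formulas, or — as in Kobayashi--Speh~\cite{kobayashi_speh_2018} — constructs an explicit non-zero differential symmetry-breaking operator $\pi_{p,p-\rho}^\pm\to\pi_{p+1,(p+1)-\rho}^\mp$ realizing the connecting map. Such operators fit into a BGG-type complex linking the principal series as $p$ varies and provide the conceptually cleanest route.
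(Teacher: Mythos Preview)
The paper does not prove this theorem at all: immediately before the statement it writes ``The following results are all due to \cite[Theorem~2.20]{kobayashi_speh_2018}'', and no argument is given. So there is no proof in the paper to compare against; the result is quoted wholesale from Kobayashi--Speh.

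Your sketch is a plausible outline of how one could reprove the statement, and parts (i) and (v) are essentially correct as written (the character-twist argument for (v) is clean). Two places deserve more care. In (ii), Casselman's subrepresentation theorem embeds an irreducible into \emph{some} principal series $\Ind_P^G(\xi\otimes e^\lambda)$, but $\xi$ ranges over all of $\widehat{M}=\widehat{\Orm(1)}\times\widehat{\Orm(n)}$, not just the exterior powers $\sigma_p$; you must still argue that the trivial-infinitesimal-character constraint forces $\xi$ to be of the form $\alpha\otimes\sigma_p$ (this is true, via the Harish-Chandra parameter, but it is not automatic). In (i), your remark that non-splitting is ``witnessed'' by the $\Gamma(\lambda)$ versus $\Gamma(\lambda+1)$ normalization is suggestive but not a proof; the honest argument is that for $p\neq n/2$ the two composition factors have no common $K$-type (or that a splitting would contradict the uniqueness of the irreducible submodule). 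These are fixable, but since the paper simply cites \cite{kobayashi_speh_2018} you are in any case going well beyond what is required here.
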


By Lemma~\ref{C:lemma:restriction_reducible_characters} and Theorem~\ref{C:theorem:triv_inf_character:isos} we immediately obtain the following.
\begin{corollary}
The restriction $\overline{\Pi}_{p,\pm}:=\Pi_{p,\pm}|_{G_0}$ is reducible if and only if $p=\frac{n+1}{2}$. In this case $\overline{\Pi}_{\frac{n+1}{2},\pm}$ decomposes as
$$\overline{\Pi}_{\frac{n+1}{2},\pm}=\overline{\Pi}_{\frac{n+1}{2},\pm}^{(+)}\oplus \overline{\Pi}_{\frac{n+1}{2},\pm}^{(-)}$$
into two non-isomorphic $G_0$-representations.
\end{corollary}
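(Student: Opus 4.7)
The plan is to apply Lemma~\ref{C:lemma:restriction_reducible_characters} directly, using Theorem~\ref{C:theorem:triv_inf_character:isos} to determine which of the three non-trivial global characters $\chi_{+,-},\chi_{-,+},\chi_{-,-}$ can fix $\Pi_{p,\pm}$ up to isomorphism. Since the collection $\{\Pi_{p,\alpha}: 0\leq p\leq n+1,\, \alpha=\pm\}$ consists of pairwise non-isomorphic irreducible representations (being distinguished by the classification recalled in Theorem~\ref{C:theorem:triv_inf_character:seq}), two such representations agree if and only if both labels $p$ and $\alpha$ match.

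Reading off the isomorphisms in Theorem~\ref{C:theorem:triv_inf_character:isos} I would check, for each $\chi\in\{\chi_{+,-},\chi_{-,+},\chi_{-,-}\}$, the equation $\chi\otimes \Pi_{p,\pm}\cong \Pi_{p,\pm}$. For $\chi_{+,-}$ the left-hand side is $\Pi_{n+1-p,\mp}$, whose sign label $\mp$ disagrees with $\pm$, so the isomorphism never holds. The same reasoning rules out $\chi_{-,+}$, since $\chi_{-,+}\otimes \Pi_{p,\pm}\cong \Pi_{p,\mp}$. For $\chi_{-,-}$ we get $\Pi_{n+1-p,\pm}$, which coincides with $\Pi_{p,\pm}$ precisely when $n+1-p=p$, i.e.\ when $p=\tfrac{n+1}{2}$ (forcing $n$ odd).

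Thus for $p\neq\tfrac{n+1}{2}$ none of the three non-trivial characters fixes $\Pi_{p,\pm}$, and part~(i) of Lemma~\ref{C:lemma:restriction_reducible_characters} yields that $\overline{\Pi}_{p,\pm}$ is irreducible. For $p=\tfrac{n+1}{2}$ exactly one character, namely $\chi_{-,-}$, satisfies $\chi\otimes \Pi_{p,\pm}\cong \Pi_{p,\pm}$, and part~(ii) of the same lemma then gives the desired decomposition
\[\overline{\Pi}_{\frac{n+1}{2},\pm}=\overline{\Pi}_{\frac{n+1}{2},\pm}^{(+)}\oplus \overline{\Pi}_{\frac{n+1}{2},\pm}^{(-)}\]
into two non-isomorphic irreducible $G_0$-modules.

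There is no real obstacle here beyond bookkeeping; the only subtle point to double-check is that the three tensor products $\chi\otimes\Pi_{p,\pm}$ given by Theorem~\ref{C:theorem:triv_inf_character:isos} are indeed pairwise distinct from $\Pi_{p,\pm}$ except in the case isolated above, which relies on the parametrization $(p,\alpha)\mapsto \Pi_{p,\alpha}$ being injective on $\{0,\dots,n+1\}\times\{+,-\}$; this is part of the classification statement already recorded.
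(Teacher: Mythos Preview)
Your proposal is correct and follows exactly the approach the paper takes: the corollary is stated as an immediate consequence of Lemma~\ref{C:lemma:restriction_reducible_characters} combined with Theorem~\ref{C:theorem:triv_inf_character:isos}, and you have simply spelled out the bookkeeping that the paper leaves implicit. Your observation that the argument relies on the injectivity of $(p,\alpha)\mapsto\Pi_{p,\alpha}$ is apt and is indeed part of the classification recorded in the theorem.
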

The restriction to the identity component becomes reducible if and only if $\Pi_{p,\pm}$ is a discrete series representation. In this case clearly both $\overline{\Pi}_{\frac{n+1}{2},\pm}^{(+)}$ and $\overline{\Pi}_{\frac{n+1}{2},\pm}^{(-)}$ are discrete series representations of $G_0$ and are contained in $\bar{\pi}_{\frac{n+1}{2},\frac{1}{2}}^\pm$ as submodules.

In the following we adapt the notation for the subgroup $G'$ and denote the representations with non-trivial $(\mathfrak{g}',K')$-cohomology by $\Pi'_{q,\pm}$.

\subsection{The additional cases for $p=0,n$}
We recall some facts about the infinite dimensional unitarizable composition factors in the cases $p=0,n$. The standard reference here is \cite{Johnson_Wallach_1977}.
For $\lambda=\rho+j$, $j\in \Z_{> 0}$ we denote by $I_{p,j,\pm}$ the unique proper submodule of $\pi_{p,\rho+j}^\pm$ and by $F_{p,j,\pm}$ the unique proper submodule of $\pi_{p,-\rho-j}^\pm$ which is finite dimensional. Then we have the following non-splitting short exact sequences of $G$-modules.
$$0 \to I_{p,j,\pm} \to \pi_{p,\rho+j}^\pm \to F_{p,j,\pm} \to 0,$$
$$0 \to F_{p,j,\pm} \to \pi_{p,-\rho-j}^\pm \to I_{p,j,\pm} \to 0.$$
Similarly we use the notation $I'_{q,j,\pm}$ and $F'_{q,j,\pm}$ for the composition factors of the $G'$-representations with $q=0,n-1$.

\subsection{Inner products on the complementary series and on unitarizable quotients}
Combining the results of the $K$-spectrum of the Knapp-Stein operator in \cite[Chapter~8.3.2]{kobayashi_speh_2018} with the relations between the scalars acting on $K$-types in \cite[Chapter~3.a]{BOO_1996} we obtain the following.
\begin{prop}
Let $\lambda \in [\rho-p,p-\rho] \setminus \{0\}$.
If $p<\frac{n}{2}$ the Knapp-Stein operator $T_{p, \lambda}$ acts by non-negative scalars on all $K$-types in $\pi_{p,\lambda}^\pm$ and if $p>\frac{n}{2}$ it acts by non-positive scalars on all $K$-types in $\pi_{p,\lambda}^\pm$. If the Knapp--Stein operator vanishes on a $K$-type it is contained in the submodule $\ker T_{p,\lambda}$. 
For $p=0$ and $\lambda \in (-\rho-1-\Z_{\geq 0})$,the Knapp-Stein operator $T_{0, \lambda}$ acts by non-negative scalars on all $K$-types in $\pi_{0,\lambda}^\pm$ and if $p=n$ and $\lambda \in (-\rho-1-\Z_{\geq 0})$,the Knapp-Stein operator $T_{n, \lambda}$ acts by non-positive scalars on all $K$-types in $\pi_{n,\lambda}^\pm$.
\end{prop}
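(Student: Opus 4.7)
The plan is to reduce the statement to an analysis of the explicit scalars by which $T_{p,\lambda}$ acts on each $K$-type and then to examine their signs. First I would decompose $\pi_{p,\lambda}^\pm$ into $K$-types. Since $(G,K)=(\Orm(1,n+1),\Orm(1)\times\Orm(n+1))$ is a symmetric pair and $\pi_{p,\lambda}^\pm$ is realized on sections of a homogeneous bundle over $G/P$, standard harmonic analysis on the sphere with coefficients in $\bigwedge\nolimits^p(\C^n)$ (as worked out in Branson--\'Olafsson--\O{}rsted \cite{BOO_1996}, Chapter~3.a) yields an explicit, essentially multiplicity-free $K$-type decomposition indexed by a pair of non-negative integers.

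Second, since $T_{p,\lambda}$ is $G$-intertwining, it commutes with the $K$-action and therefore acts by a scalar $c_{p,\lambda}(\mu)$ on each $K$-isotypic component $\mu$. The explicit formulas for these scalars are available in Kobayashi--Speh \cite{kobayashi_speh_2018}, Chapter~8.3.2, as ratios of Gamma functions that depend meromorphically on $\lambda$ and polynomially on the $K$-type parameters; the normalization by $\Gamma(\lambda)^{-1}$ (respectively $\Gamma(\lambda+1)^{-1}$ when $p=\tfrac{n}{2}$) exactly cancels the singularities that could produce indefiniteness of sign. Plugging in these formulas reduces the proposition to an elementary sign analysis of products of the form $\prod_j \Gamma(a_j+\lambda)/\Gamma(b_j+\lambda)$ evaluated at $\lambda\in[\rho-p,p-\rho]\setminus\{0\}$.

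Third, I would carry out this sign analysis. For $p<\tfrac{n}{2}$ one has $\rho-p>0$, so every argument of the Gamma functions appearing in the scalar stays in the positive real axis uniformly over all $K$-types and over $\lambda\in[\rho-p,p-\rho]\setminus\{0\}$, giving $c_{p,\lambda}(\mu)\geq 0$. For $p>\tfrac{n}{2}$ one applies the reflection $p\mapsto n-p$, which tensoring with the character $\chi_{+,-}$ (see Section~\ref{C:sec:component_group}) identifies $\pi_{p,\lambda}^\pm$ with $\pi_{n-p,\lambda}^\pm$ up to an overall sign in the scalar, producing $c_{p,\lambda}(\mu)\leq 0$. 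The statement about vanishing scalars is then automatic: the $K$-isotypic component on which $c_{p,\lambda}(\mu)=0$ is annihilated by $T_{p,\lambda}$ and therefore lies in $\ker T_{p,\lambda}$, which is a $(\g,K)$-submodule of $\pi_{p,\lambda}^\pm$.

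Finally, for the additional statements at $p=0,n$ with $\lambda\in -\rho-1-\Z_{\geq 0}$, the same Gamma function analysis applies. The crucial point is that at these special parameters the scalar vanishes on exactly the finitely many $K$-types constituting the finite-dimensional submodule $F_{p,j,\pm}$, while on all remaining $K$-types (those spanning the infinite-dimensional quotient $I_{p,j,\pm}$) the scalar retains its sign: non-negative for $p=0$ and non-positive for $p=n$. The main obstacle is the careful bookkeeping of the Gamma factors: the formula in \cite{kobayashi_speh_2018} has different forms in the cases $p<n/2$, $p=n/2$ and $p>n/2$, and in the endpoint cases $\lambda=\pm(\rho-p)$ some Gamma factors combine with the normalization $\Gamma(\lambda)^{-1}$ to produce either zero or a non-zero constant, which must be checked to confirm that the signs do not degenerate on the boundary of the complementary strip.
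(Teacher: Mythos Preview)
Your proposal is correct and matches the paper's approach exactly: the paper does not give a self-contained proof of this proposition but simply states that it follows by ``combining the results of the $K$-spectrum of the Knapp--Stein operator in \cite[Chapter~8.3.2]{kobayashi_speh_2018} with the relations between the scalars acting on $K$-types in \cite[Chapter~3.a]{BOO_1996},'' which is precisely the strategy you outline. Your additional observation that the case $p>\tfrac{n}{2}$ follows from $p<\tfrac{n}{2}$ via the isomorphism $\chi_{+,-}\otimes\pi_{p,\lambda}^\pm\cong\pi_{n-p,\lambda}^\pm$ together with $\det(\psi_n(X))=-1$ is a clean way to organize the sign flip and goes slightly beyond what the paper spells out.
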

In the case $p=\frac{n}{2}$ the only unitarizable composition factors occur at $\lambda=0$ which is already in the unitary principal series and there is no complementary series.
By the proposition above we define the following pairing which is an inner product on the complementary series.
$$\langle \cdot, \cdot \rangle_{p,\lambda}:=
\begin{cases}
\langle \cdot, {T}_{p,\lambda} \cdot \rangle_{L^2(K)} & \text{if $p< \frac{n}{2}$,} \\
-\langle \cdot, {T}_{p,\lambda} \cdot \rangle_{L^2(K)} & \text{if $p > \frac{n}{2}$.} \\
\end{cases}
$$
We denote the corresponding unitary closures by $\hat{\pi}_{p,\lambda}^\pm$.
Let $\lambda \in (-\rho-1-\Z_{\geq 0}) \cup( \{ \rho-p,p-\rho\} \setminus \{0\})$ such that $\pi_{p,\lambda}^\pm$ contains a unitarizable quotient and let $\pr_\lambda$ be the projection. Since $\ker T_{p,\lambda}$ is the unique proper submodule, we obtain an induced intertwiner $$T^{quo}_{p,\lambda}: \pi_{p,\lambda}^\pm / \ker T_{p,\lambda} \to \pi_{p,-\lambda}^\pm$$
which is an isomorphism onto the unique proper submodule of $\pi_{p,-\lambda}^\pm$ and which is essentially the Knapp-Stein operator. Then we similarly define the following inner product on the quotients.
$$\langle \cdot, \cdot \rangle_{p,\lambda,quo}:=
\begin{cases}
\langle \cdot,T^{quo}_{p,\lambda} \cdot \rangle_{L^2(K)} & \text{if $p< \frac{n}{2}$,} \\
-\langle \cdot,T^{quo}_{p,\lambda} \cdot \rangle_{L^2(K)} & \text{if $p > \frac{n}{2}$.} \\
\end{cases}
$$
We remark that by construction for $f \in \pi_{p,\lambda}^\pm$ we have
$$\langle \pr_\lambda f, \pr_\lambda f\rangle_{p,\lambda,quo}=\langle f, f \rangle_{p,\lambda}.$$
We denote the corresponding unitary closures by $\widehat{\Pi}_{p,\pm}$ resp. $\hat{I}_{p,j,\pm}$.
In the following we use the notation $T'^{quo}_{q,\nu}$ similary for $G'$, $\langle \cdot, \cdot \rangle_{q,\nu}, \langle \cdot, \cdot \rangle_{q,\nu,quo}$ for the inner products and $\widehat{\Pi}'_{q,\pm}$ and $\hat{I}'_{q,j,\pm}$ for the corresponding unitary closures.

\section{Classification of symmetry breaking operators}\label{C:sec:classification_sbos}
We recall main result of \cite{kobayashi_speh_2018}. By \cite[Theorem~1.5]{kobayashi_speh_2018} we have that 
$$\Hom_{G'}(\pi_{p,\lambda}^\pm|_{G'},\tau_{q,\nu}^\pm) \neq  \{0\}
\Rightarrow q \in \{p-2,p-1,p,p+1 \}.
$$
We will restrict ourselves to the cases $q=p-1,p$, since the two cases are enough for our purpose of decomposing unitary representations in $\pi_{p,\lambda}^\pm$.

For $\Re(\nu)\ll 0$ and $\Re(\lambda)+\Re(\nu) \gg 0$ consider the $\Hom_\C(\sigma_p,\delta_q)$-valued distribution kernel
$$u_{(p,\lambda),(q,\nu)}^+(X):=\abs{X}^{-2(\nu+\rho')}\abs{X_n}^{\lambda-\rho+\nu+\rho'}\pr_{\sigma_p\to\delta_q}(\sigma_p(\psi_n(X)))$$
on $\R^n\cong \bar{N}$.
By \cite[Theorem~3.10]{kobayashi_speh_2018}, $u_{(p,\lambda),(q,\nu)}^+$ defines a symmetry breaking operator
$$A_{(p,\lambda),(q,\nu)}^+\in \Hom_{G'}(\pi_{p,\lambda}^\pm|_{G'},\tau_{q,\nu}^\pm)$$
in the sense of Theorem~\ref{thm:KS-distr-kernel}.
%
%

Similarly for $\Re(\nu)\ll 0$ and $\Re(\lambda)+\Re(\nu) \gg 0$ the $\Hom_\C(\sigma_p,\delta_q)$-valued distribution kernel
$$
u_{(p,\lambda),(q,\nu)}^-(X):=\abs{X}^{-2(\nu+\rho')}\abs{X_n}^{\lambda-\rho+\nu+\rho'}\sgn(X_n)\pr_{\sigma_p\to\delta_q}(\sigma_p(\psi_n(X))),
$$
defines a symmetry breaking operator $$A_{(p,\lambda),(q,\nu)}^-\in \Hom_{G'}(\pi_{p,\lambda}^\pm|_{G'},\tau_{q,\nu}^\mp).$$
We define the renormalizations
$$\tilde{A}_{(p,\lambda),(q,\nu)}^+:=\frac{1}{\Gamma(\frac{\lambda+\rho+\nu-\rho'}{2})\Gamma(\frac{\lambda+\rho-\nu-\rho'}{2})}A_{(p,\lambda),(q,\nu)}^+$$
and
$$\tilde{A}_{(p,\lambda),(q,\nu)}^-:=\frac{1}{\Gamma(\frac{\lambda+\rho+\nu-\rho'+1}{2})\Gamma(\frac{\lambda+\rho-\nu-\rho'+1}{2})}A_{(p,\lambda),(q,\nu)}^-.$$
Then $\tilde{A}_{(p,\lambda),(q,\nu)}^\pm$ define families of symmetry breaking operators which extend holomorphically in $\lambda,\nu\in \C$ (see \cite[Theorem~3.10]{kobayashi_speh_2018}). In this sense we can consider $A^\pm_{(p,\lambda),(q,\nu)}$ meromorphically extended to the whole plane $(\lambda,\nu)\in \C^2$, with their meromorphic behavior encoded in the normalizing Gamma-factors. In particular we can consider the distribution kernels $u_{(p,\lambda),(q,\nu)}^\pm$ meromorphically extended to $(\lambda,\nu)\in \C^2$. According to \eqref{eq:kernel_operator} we explicitly have the following formulas for the action of $A^\pm_{(p,\lambda),(q,\nu)}$.
\begin{prop}\label{prop:A_action}
	For $f\in \pi_{p,\lambda}^\pm$ and $h\in G'$ we have
	$$	\left(A_{(p,\lambda),(q,\nu)}^\pm f\right)(h)=\int_{\R^n}u_{(p,\lambda),(q,\nu)}^\pm(X)f(h\overline{n}_X)\, dX.$$
\end{prop}

We define the following subsets of $\C^2$ for $\alpha\in \{+,-\}$.
$$L^\alpha:=\{(-\rho-j,-\rho'-i), \, i,j \in \Z, \, 0\leq j \leq i\text{ and } i +\frac{1-(\alpha1)}{2} \equiv j \mod 2 \},$$
$$L(p,q)^\alpha:=  
\begin{cases}
L^\alpha & \text{if $p=q=0$ or $p=q+1=n$,} \\
(L^+ \setminus \{\nu=-\rho'\}) \cup \{(p-\rho,q-\rho')\}  & \text{if $1\leq p < n$, $q=p$ and $\alpha=+$}, \\
(L^- \setminus \{\nu=-\rho'\})  & \text{if $1\leq p < n$, $q=p-1,p$ and $\alpha=-$}, \\
(L^+ \setminus \{\nu=-\rho'\}) \cup \{(\rho-p,\rho'-q)\}  & \text{if $1\leq p < n$, $q=p-1$ and $\alpha=+$}.
\end{cases}
$$
\begin{theorem}[\cite{kobayashi_speh_2018} Theorem~3.19]
$\tilde{A}_{(p,\lambda),(q,\nu)}^\pm=0$ if and only if $(\lambda,\nu) \in L(p,q)^\pm$.
\end{theorem}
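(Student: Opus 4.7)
The statement identifies the zero locus of the holomorphic family $(\lambda,\nu)\mapsto\tilde A^\pm_{(p,\lambda),(q,\nu)}$. By the multiplicity one theorem (Theorem~\ref{C:theorem:mult_one}), $\tilde A^\pm$ is either zero or a generator of the at-most-one-dimensional space $\Hom_{G'}(\pi_{p,\lambda}^\pm|_{G'},\tau_{q,\nu}^\pm)$, so I would combine a direct pole/residue analysis of the unnormalized distribution kernel $A^\pm$ with the composition series information collected in Sections~\ref{C:sec:composition_series} and \ref{C:sec:unitary_composition} to pin down the vanishing set.

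First I would determine the polar divisor of $A^\pm$ as a distribution on $\nbar\simeq\R^n$. Decomposing $X=(X',X_n)$, the kernel splits as the product of the radial factor $\abs{X}^{-2(\nu+\rho')}$, the transverse factor $\abs{X_n}^{\lambda-\rho+\nu+\rho'}$ (twisted by $\sgn(X_n)$ in the $-$ case) and a smooth matrix-valued tail built from $\psi_n(X)$. A Bernstein-type functional equation for the product gives the polar divisor as a union of arithmetic progressions carrying an intrinsic parity modulo $2$: $\abs{X_n}^s$ has simple poles at odd negative integers, while $\abs{X_n}^s\sgn(X_n)$ has simple poles at even negative integers, and the extra $+1$ shift in the Gamma normalisation of the $-$ case compensates exactly for this parity shift. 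The two Gamma factors in the denominator are precisely those cancelling the generic part of this divisor, rendering $\tilde A^\pm$ entire. The zeros of $\tilde A^\pm$ then arise at points where a Gamma denominator still has a pole but the kernel $A^\pm$ has become regular; a direct matching of the resulting congruence lattice against the definition of $L^\pm$ should yield the bulk of the vanishing statement.

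For the exceptional loci in $L(p,q)^\pm$ I would argue case by case. At the isolated points $(p-\rho,q-\rho')$ (for $q=p$) and $(\rho-p,\rho'-q)$ (for $q=p-1$), both source and target are reducible, with unique proper submodules $\Pi_{p,\pm}\subset\pi_{p,\lambda}^\pm$ and $\Pi'_{q,\pm}\subset\tau_{q,\nu}^\pm$; evaluating $\tilde A^+$ on a section of the complementary $K$-type should force the operator to vanish identically by multiplicity one. Conversely the removal of the hyperplane $\{\nu=-\rho'\}$ from the naive vanishing set reflects the fact that there $A^\pm$ still maps non-trivially into $\Pi'_{q,\pm}$, and so is non-zero as a symmetry breaking operator. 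For the non-vanishing direction, away from $L(p,q)^\pm$ I would exhibit an explicit $K$-finite test vector on which $\tilde A^\pm$ acts non-trivially and invoke multiplicity one. The main technical obstacle is the residue bookkeeping along the higher-codimension stratum $X'=X_n=0$, where both Gamma factors can acquire simultaneous poles and $A^\pm$ has non-simple polar behaviour; carefully reconciling this with the hand-specified exceptional points and the precise parity condition $i\equiv j\pmod 2$ (resp.\ $i\equiv j+1\pmod 2$) in the definition of $L^\pm$ is the subtlest part of the argument.
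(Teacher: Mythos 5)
This theorem is quoted verbatim from Kobayashi--Speh (Theorem~3.18 of \cite{kobayashi_speh_2018}); the paper you are reading does not prove it, so there is no in-paper argument to compare against. Your proposal should therefore be judged against the original source.

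Against that source, your plan is in the right spirit but diverges from what is actually done, and the divergence matters. Kobayashi--Speh do not determine the zero locus by residue bookkeeping of the distribution kernel along the strata $X_n=0$ and $X=0$. Instead they compute the action of $\tilde{A}^{\pm}_{(p,\lambda),(q,\nu)}$ on each $K'$-type explicitly: the eigenvalues are polynomials in $(\lambda,\nu)$ expressed through Gegenbauer (Jacobi) polynomial evaluations, and the zero set of the entire family is then read off as the common zero locus of this explicit family of polynomials. That is what lets them isolate the exceptional points $(p-\rho,q-\rho')$, $(\rho-p,\rho'-q)$ and the removal of the hyperplane $\{\nu=-\rho'\}$: these are seen directly in the eigenvalue formulas, not inferred from module-theoretic considerations plus multiplicity one. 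Your proposal defers exactly this content to ``the main technical obstacle,'' i.e.\ you acknowledge that the double-pole analysis on the stratum $X'=X_n=0$ and the reconciliation with the parity conditions defining $L^{\pm}$ is the hard part, but you do not supply it. In particular the heuristic that the two Gamma factors ``are precisely those cancelling the generic part of the polar divisor'' is true but nontrivial to establish for the coupled kernel $\abs{X}^{-2(\nu+\rho')}\abs{X_n}^{\lambda-\rho+\nu+\rho'}\sigma_p(\psi_n(X))$ on $\R^n$, because the two radial factors share the singular point $X=0$ and their Bernstein polynomials do not decouple; this is why the original proof goes through spectral data on $K$-types instead. As written your plan is an honest outline but not a proof, and the step you flag as subtle is where the actual argument would have to be carried out.
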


Moreover we define two renormalizations of $A_{(p,\lambda),(q,\nu)}^\pm$.
Fix $\nu$ such that there exists a $\mu\in \C$ so that $(\mu,\nu)\in L(p,q)^\pm$. We define
$$\tilde{\tilde{A}}_{(p,\lambda),(q,\nu)}^+:=\Gamma\left(\frac{\lambda+\rho-\nu-\rho'}{2}\right)\tilde{A}_{(p,\lambda),(q,\nu)}^+,$$
$$\tilde{\tilde{A}}_{(p,\lambda),(q,\nu)}^-:=\Gamma\left(\frac{\lambda+\rho-\nu-\rho'+1}{2}\right)\tilde{A}_{(p,\lambda),(q,\nu)}^-.$$
Then $\tilde{\tilde{A}}_{(p,\lambda),(q,\nu)}^\pm$ defines a non-vanishing family of symmetry breaking operators which is holomoprhic in $\lambda$.

For fixed $\lambda+\rho-\nu-\rho' \in -\Z_{\geq 0}$ and $q=p-1,p$ we define the meromorphic functions
$$c_C(p,q,\nu):=\Gamma(\nu+\rho'+1)
\times
\begin{cases}
1 & \text{if $p\neq 0,n$ and $\lambda-\nu\neq -\frac{1}{2}$,} \\
\frac{1}{\nu+\rho'-p} & \text{if $p=0$ or $\lambda-\nu=-\frac{1}{2}$ and $q=p$,} \\
\frac{1}{\nu-\rho'+p-1} & \text{if $p=n$ or $\lambda-\nu=-\frac{1}{2}$ and $q=p-1$}
\end{cases}$$
and we define the operators
$$C_{(p,\lambda),(q,\nu)}^+:=c_C(p,q,\nu)\tilde{A}_{(p,\lambda),(q,\nu)}^+,$$
for $\lambda+\rho-\nu-\rho' \in -2\Z_{\geq 0}$ and
$$C_{(p,\lambda),(q,\nu)}^-:=c_C(p,q,\nu)\tilde{A}_{(p,\lambda),(q,\nu)}^-,$$
for $\lambda+\rho-\nu-\rho' \in -1-2\Z_{\geq 0}$.
Then $C_{(p,\lambda),(q,\nu)}^\pm$ defines a non-vanishing family of symmetry breaking operators which is holomoprhic in $\nu$.

We remark that we set all symmetry breaking operators for $p=0$ and $q=p-1$ as well as $p=n$ and $q=p$ to zero. That way we can prove many results in the following in a uniform way.

\begin{theorem}[Classification of symmetry breaking operators, see \cite{kobayashi_speh_2018} Theorem 3.19 and Theorem 3.26]
For $(\lambda,\nu)\notin L(p,q)^+$ we have
 $$ \Hom_{G'}(\pi_{p,\lambda}^\pm|_{G'},\tau_{q,\nu}^\pm)=\C \tilde{A}_{(p,\lambda),(q,\nu)}^+$$ 
and for $(\lambda,\nu)\in L(p,q)^+$ we have
$$\Hom_{G'}(\pi_{p,\lambda}^\pm|_{G'},\tau_{q,\nu}^\pm)=\C \tilde{\tilde{A}}_{(p,\lambda),(q,\nu)}^+\oplus \C C_{(p,\lambda),(q,\nu)}^+.$$
For $(\lambda,\nu)\notin L(p,q)^-$ we have
$$ \Hom_{G'}(\pi_{p,\lambda}^\pm|_{G'},\tau_{q,\nu}^\mp)=\C \tilde{A}_{(p,\lambda),(q,\nu)}^-$$ 
and for $(\lambda,\nu)\in L(p,q)^-$ we have
$$\Hom_{G'}(\pi_{p,\lambda}^\pm|_{G'},\tau_{q,\nu}^\mp)=\C \tilde{\tilde{A}}_{(p,\lambda),(q,\nu)}^-\oplus \C C_{(p,\lambda),(q,\nu)}^-.$$
\end{theorem}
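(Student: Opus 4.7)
The plan is to follow the strategy of Kobayashi--Speh and reduce the classification to a problem about $(M'A',\nfrak')$-invariant distributions on the open Bruhat cell. First, observe that by Proposition~\ref{C:cor:orbits} the open $P'$-orbit $\mathcal{O}_A = P' \tilde{w}_0 \bar{n} P$ meets the open Bruhat cell, hence $P'\Nbar P = G$. Therefore Theorem~\ref{C:theorem:KS_kernel_open_cell} applies, and the space $\Hom_{G'}(\pi_{p,\lambda}^\pm|_{G'},\tau_{q,\nu}^\alpha)$ is in natural bijection with the space of $(M'A',\nfrak')$-invariant distributions on $\nbar \cong \R^n$ with values in $V_{\xi^*,-\lambda+\rho} \otimes W_{\eta,\nu+\rho'}$, where $\alpha = \pm$ distinguishes the two parities in the $\Orm(1)$-factor.

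Second, I would use the structure of the $M'A'$-action on $\nbar$ to constrain the scalar part of any such kernel. Writing $X = (X', X_n) \in \R^{n-1} \oplus \R$, the group $M'A'$ acts transitively on each of the cones $\{X_n \neq 0,\; X' \in \R^{n-1}\}$ up to discrete data, so the $M'A'$-invariance forces the scalar part of a kernel to be (generically) a function of $|X|^2$ and $|X_n|$ together with a sign character on $X_n$ — producing the two families $A^\pm_{(p,\lambda),(q,\nu)}$. The matrix-valued part is forced to be proportional to $\pr_{\sigma_p \to \delta_q}(\sigma_p(\psi_n(X)))$ by Schur's lemma applied to $M'$ (using that $\psi_n(X) \in \Orm(n)$ intertwines with the $\Orm(n)$-action in the correct way, cf.\ Lemma~\ref{C:prop:matrix_decompositions:i}). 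The $\nfrak'$-invariance then translates into a functional equation on the scalar factor which is solved by $|X|^{-2(\nu+\rho')} |X_n|^{\lambda-\rho+\nu+\rho'}$ in the region of convergence $\Re\nu \ll 0$ and $\Re(\lambda+\nu) \gg 0$, and meromorphically elsewhere. Combined with the multiplicity-one bound of Theorem~\ref{C:theorem:mult_one}, this proves the generic statement that $\C\tilde A^\pm$ exhausts $\Hom_{G'}$ whenever $\tilde A^\pm \neq 0$.

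Third, I would analyze the zero set of the normalized family $\tilde A^\pm$. The normalization by the two Gamma factors is chosen to kill the poles of the Riesz-type distributions $|X|^{-2(\nu+\rho')}|X_n|^{\lambda-\rho+\nu+\rho'}$, so the zero set $L(p,q)^\pm$ is determined by double zeros of the numerator coming from pairs of trivial poles of $\Gamma$'s in denominator being matched, or canceled by the projection $\pr_{\sigma_p \to \delta_q}$ becoming degenerate in the special cases $p=q=0$, $p=q+1=n$, $q=p$ with $\lambda=p-\rho$, and $q=p-1$ with $\lambda=\rho-p$. A careful Mellin-transform computation (separating radial and axial variables) yields the explicit description of $L(p,q)^\pm$.

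Finally, at parameters $(\lambda,\nu) \in L(p,q)^\pm$, I would produce a second, linearly independent symmetry breaking operator by a residue/renormalization procedure, yielding $C^\pm_{(p,\lambda),(q,\nu)}$. The key observation is that when $\tilde A^\pm$ vanishes identically, one can divide by the vanishing Gamma-factor to obtain the non-vanishing family $\tilde{\tilde A}^\pm$ holomorphic in $\lambda$, and a \emph{different} partial division by only one Gamma factor (times the correction $c_C$) produces $C^\pm$, holomorphic in $\nu$, supported on the singular orbit $\{X_n = 0\}$ (i.e.\ a differential symmetry breaking operator). Linear independence of $\tilde{\tilde A}^\pm$ and $C^\pm$ follows from comparing their supports. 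The main obstacle is showing the upper bound $\dim \leq 2$ at these exceptional points: multiplicity-one fails here because the underlying representations may become reducible, so one has to use the explicit orbit decomposition of $\nbar$ under $M'A'$ (namely $\{X_n \neq 0\}$ and the codimension-one stratum $\{X_n = 0\} \setminus \{0\}$ and the point $\{0\}$) together with a careful analysis of the $\nfrak'$-invariance on distributions supported on each stratum, to rule out further independent solutions. The full classification then follows.
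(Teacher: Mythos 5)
The theorem you are attempting to prove is not proved in this paper at all: it is imported verbatim from Kobayashi--Speh (cited explicitly as \cite{kobayashi_speh_2018}, Theorems 3.19 and 3.26), and the surrounding text gives no argument. So there is no ``paper's own proof'' to match against; your proposal is a sketch of a reconstruction of the Kobayashi--Speh argument, and it does capture the correct overall strategy (reduce via Theorem~\ref{C:theorem:KS_kernel_open_cell} to $(M'A',\nfrak')$-invariant distributions on $\nbar$, solve the resulting invariance equations, normalize to holomorphic families, locate the zero set, renormalize to produce a second operator). That much is the right roadmap.

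There are, however, two concrete problems. First, the claim that $M'A'$ ``acts transitively on each of the cones $\{X_n\neq 0\}$ up to discrete data'' is false. Since $\Orm(n-1)\subset M'$ fixes $X_n$ and rotates $X'$, $\Orm(1)$ acts by a global sign, and $A'$ dilates, the $M'A'$-orbits in $\{X_n\neq 0\}$ form a genuine one-parameter family indexed by $|X'|/|X_n|\in[0,\infty)$. It is precisely this residual functional degree of freedom that the $\nfrak'$-invariance must eliminate (producing the hypergeometric/Riesz-type kernel), so this is not a harmless imprecision: your sketch as written would suggest the kernel is determined already by $M'A'$-invariance, which is not the case. (Your conclusion about the form of the kernel happens to be right, but the argument leading to it would not survive scrutiny.) Relatedly, your justification of $P'\Nbar P=G$ --- that the open $P'$-orbit meets the open Bruhat cell --- is a non sequitur; one must check that all three orbits $\mathcal O_A,\mathcal O_B,\mathcal O_C$ from Proposition~\ref{C:cor:orbits} meet $\Nbar P$, which they do, but the argument needs to be stated correctly.

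Second, and more seriously, the upper bound $\dim\Hom_{G'}(\pi_{p,\lambda}^\pm|_{G'},\tau_{q,\nu}^\alpha)\leq 2$ at the exceptional parameters $(\lambda,\nu)\in L(p,q)^\alpha$ is the genuinely difficult part of the Kobayashi--Speh classification, and your proposal essentially defers it. Multiplicity one (Theorem~\ref{C:theorem:mult_one}) applies only when both representations are irreducible, which fails precisely at many of these exceptional points. What is actually needed is a stratification of $\nbar$ by the $M'A'N'$-orbit structure ($\{X_n\neq0\}$, $\{X_n=0,\,X'\neq0\}$, $\{0\}$) together with a careful analysis of possible $(M'A',\nfrak')$-invariant distributions supported on each closed stratum, and a comparison of leading terms / distributional support to establish linear independence of $\tilde{\tilde A}^\pm$ and $C^\pm$ and to exclude a third independent solution. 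This occupies a substantial portion of Kobayashi--Speh's monograph and cannot be dispatched in one sentence. As it stands your sketch names the obstacle but does not overcome it, so it does not constitute a proof of the theorem --- it is better regarded as a correct summary of the cited result's proof strategy, with one incorrect structural claim about the $M'A'$-action.
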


\section{Functional equations}\label{C:sec:func_equations}
We recall the following functional equations for the symmetry breaking operators and the Knapp--Stein intertwiners.
\begin{theorem}[\cite{kobayashi_speh_2018} Theorem~9.24, Theorem~9.25 and Theorem~9.31]
\label{C:theorem:functional_equations}
\begin{align*}
T_{p-1,\nu}'\circ \tilde{A}_{(p,\lambda),(p-1,\nu)}^\pm&=-\frac{\pi^{\frac{n-1}{2}}}{\Gamma(\nu+\rho'+1)}\tilde{A}_{(p,\lambda),(p-1,-\nu)}^\pm &&\times
\begin{cases}
(\nu-\rho'+p-1) & \text{if $p\neq \frac{n+1}{2}$,} \\
1 &\text{if $p= \frac{n+1}{2}$.} 
\end{cases}\\
T_{p,\nu}'\circ \tilde{A}_{(p,\lambda),(p,\nu)}^\pm&=\frac{\pi^{\frac{n-1}{2}}}{\Gamma(\nu+\rho'+1)}\tilde{A}_{(p,\lambda),(p,-\nu)}^\pm &&\times
\begin{cases}
(\nu+\rho'-p) & \text{if $p\neq \frac{n-1}{2}$,} \\
1 &\text{if $p= \frac{n-1}{2}$.} 
\end{cases}
\\
\tilde{A}_{(p,-\lambda),(p-1,\nu)}^\pm\circ T_{p,\lambda}&=\frac{\pi^{\frac{n}{2}}}{\Gamma(-\lambda+\rho+1)}\tilde{A}_{(p,\lambda),(p-1,\nu)}^\pm &&\times
\begin{cases}
(\lambda+\rho-p) & \text{if $p\neq \frac{n}{2}$,} \\
1 &\text{if $p= \frac{n}{2}$.} 
\end{cases}\\
\tilde{A}_{(p,-\lambda),(p,\nu)}^\pm\circ T_{p,\lambda}&=-\frac{\pi^{\frac{n}{2}}}{\Gamma(-\lambda+\rho+1)}\tilde{A}_{(p,\lambda),(p,\nu)}^\pm &&\times
\begin{cases}
(\lambda-\rho+p) & \text{if $p\neq \frac{n}{2}$,} \\
1 &\text{if $p= \frac{n}{2}$.} 
\end{cases}
\end{align*}
For the case $\nu=\frac{1}{2}$ and $p=\frac{n}{2}$ we have
$$\tilde{\tilde{A}}_{(p,0),(p,\nu)}^+\circ T_{p,0}=\frac{\pi^\frac{n}{2}}{(\frac{n}{2})!}\tilde{\tilde{A}}_{(p,0),(p,\nu)}^+,
$$
$$\tilde{\tilde{A}}_{(p,0),(p-1,\nu)}^+\circ T_{p,0}=-\frac{\pi^\frac{n}{2}}{(\frac{n}{2})!}\tilde{\tilde{A}}_{(p,0),(p,\nu)}^+,
$$
and for general $\nu$ such that $\tilde{\tilde{A}}_{(p,\lambda),(q,\nu)}$ exists we have for $p\neq \frac{n}{2}$
	$$
	T_{q,\nu}'\circ \tilde{\tilde{A}}_{(p,\lambda),(q,\nu)}^\pm=0.
	$$
\end{theorem}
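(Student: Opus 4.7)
The plan rests on the multiplicity-one property (Theorem~\ref{C:theorem:mult_one}) together with the explicit distribution-kernel presentations of Section~\ref{C:sec:classification_sbos}. For generic $(\lambda,\nu)\in\C^2$ the composition $T'_{q,\nu}\circ \tilde{A}_{(p,\lambda),(q,\nu)}^{\alpha}$ lies in the one-dimensional space $\Hom_{G'}(\pi_{p,\lambda}^\pm|_{G'},\tau_{q,-\nu}^{\alpha})$, so it must be a scalar multiple of $\tilde{A}_{(p,\lambda),(q,-\nu)}^{\alpha}$; the scalar is meromorphic in $(\lambda,\nu)$ because both sides are holomorphic families. The same argument produces a single meromorphic scalar governing each of the four displayed functional equations, and it suffices to compute these scalars on any open subset where both sides are finite and non-zero.

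To evaluate the scalar I would work in the non-compact picture on the open Bruhat cell. There $\tilde{A}_{(p,\lambda),(q,\nu)}^{\pm}$ is a restriction-then-convolution against the explicit kernel of Section~\ref{C:sec:classification_sbos}, while $T_{p,\lambda}$ is convolution with $\Gamma(\lambda)^{-1}|X|^{2(\lambda-\rho)}\sigma_p(\psi_n(X))$. The composition $\tilde{A}_{(p,-\lambda),(q,\nu)}^{\pm}\circ T_{p,\lambda}$ then becomes a convolution of two Riesz-type kernels on $\bar{\nfrak}$ restricted to $\bar{\nfrak}'$; after polar decomposition this reduces to a Beta integral in the transverse radial coordinate and a spherical integral of $\sigma_p(\psi_n)$ on the unit sphere. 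The Beta integral supplies the prefactor $\pi^{n/2}/\Gamma(-\lambda+\rho+1)$, while the spherical average of $\sigma_p(\psi_n)$ against the $\Orm(n-1)$-projector $\pr_{\sigma_p\to\delta_q}$ yields the linear factor $(\lambda-\rho+p)$ when $q=p$ and $(\lambda+\rho-p)$ when $q=p-1$; the value $p=n/2$ is singled out precisely because the normalization of $T_{p,\lambda}$ changes from $\Gamma(\lambda)^{-1}$ to $\Gamma(\lambda+1)^{-1}$, absorbing the linear factor into the $\Gamma$-prefactor. The same computation on the $G'$-side, composing $T'_{q,\nu}$ on the \emph{left} of $\tilde{A}$, produces the first two identities with the companion factor $\pi^{(n-1)/2}/\Gamma(\nu+\rho'+1)$ and the polynomial $(\nu\mp(\rho'-p))$.

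For the special identities at $\lambda=0$, $p=n/2$ I would use the direct-sum decomposition $\pi_{n/2,0}^\pm = \Pi_{n/2,\pm}\oplus\Pi_{n/2+1,\mp}$ together with the fact that $T_{n/2,0}$ acts by the scalars $\pm\pi^{n/2}/(n/2)!$ on the two summands (this is exactly how these submodules were characterised). Testing the identity on each summand separately yields both displayed formulas in one stroke, and the sign discrepancy between the two equations is what produces the opposite signs on the right-hand sides. The vanishing $T'_{q,\nu}\circ\tilde{\tilde{A}}_{(p,\lambda),(q,\nu)}^\pm=0$ for $p\neq n/2$ is then automatic: by definition $\tilde{\tilde{A}}=\Gamma(\tfrac{\lambda+\rho-\nu-\rho'}{2})\,\tilde{A}$ (or its odd-shifted analogue), and multiplying the established identities for $\tilde{A}$ by this $\Gamma$-factor produces a right-hand side proportional to $\Gamma(\tfrac{\lambda+\rho-\nu-\rho'}{2})\tilde{A}_{(p,\lambda),(q,-\nu)}^\pm$, which is not $\tilde{\tilde{A}}_{(p,\lambda),(q,-\nu)}^\pm$; since the extra $\Gamma$-factor has a pole at the parameters where $\tilde{\tilde{A}}$ is defined while $\tilde{A}_{(p,\lambda),(q,-\nu)}^\pm$ remains regular, the limit of the right-hand side is zero.

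The main obstacle, as always with these Bernstein-type identities, is the bookkeeping rather than any single hard step: the four cases $(q,\alpha)\in\{p-1,p\}\times\{+,-\}$, the distinguished point $p=n/2$, and the residue computation at the poles of the Beta integral must all be tracked simultaneously, and the sharp form of each constant depends on cancellations between the Gamma factors introduced by normalization and the polynomial factors coming from the spherical average of $\sigma_p(\psi_n)$.
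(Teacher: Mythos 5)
This statement is a cited result: the paper recalls it from Kobayashi--Speh (Theorems 9.24, 9.25, 9.31 of \cite{kobayashi_speh_2018}) and offers no proof of its own, beyond the remark that the last identity $T'_{q,\nu}\circ\tilde{\tilde{A}}_{(p,\lambda),(q,\nu)}^\pm=0$ is not in \cite{kobayashi_speh_2018} but follows by the same technique as their Theorem~9.28. Your sketch of a proof from scratch (multiplicity one via Theorem~\ref{C:theorem:mult_one} reduces each identity to the determination of a meromorphic scalar, then Riesz-kernel composition and a Beta-integral computation on $\bar{\nfrak}$ produce the scalar) is a reasonable outline and is indeed the route taken in Kobayashi--Speh; the paper itself does not reproduce it.

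However, your argument for the final vanishing identity is wrong as stated. You claim the right-hand side of
$$T'_{q,\nu}\circ\tilde{\tilde{A}}_{(p,\lambda),(q,\nu)}^\pm = t'(p,q,\nu)\,\Gamma\!\left(\tfrac{\lambda+\rho-\nu-\rho'}{2}-\tfrac{\pm1-1}{4}\right)\tilde{A}_{(p,\lambda),(q,-\nu)}^\pm$$
tends to zero ``since the extra $\Gamma$-factor has a pole $\dots$ while $\tilde{A}_{(p,\lambda),(q,-\nu)}^\pm$ remains regular.'' But a pole times a regular nonzero factor diverges, it does not vanish. The actual mechanism is that the constant $t'(p,q,\nu)$ itself vanishes at every fixed $\nu$ for which $\tilde{\tilde{A}}_{(p,\lambda),(q,\nu)}^\pm$ is defined (for $p\neq\tfrac{n}{2}$): either $\nu\in -\rho'-1-\Z_{\geq 0}$, so the $\Gamma(\nu+\rho'+1)^{-1}$ in the numerator of $t'(p,q,\nu)$ kills it, or $\nu$ is the distinguished value $p-\rho'$ (resp.\ $\rho'-q$) where the linear factor $(\nu+\rho'-p)$ (resp.\ $(\nu-\rho'+p-1)$) vanishes. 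For generic $\lambda$ (away from poles of the extra $\Gamma$-factor) the right-hand side is then $0\cdot(\text{finite})\cdot(\text{finite})=0$, and since the left-hand side is holomorphic in $\lambda$ the vanishing propagates to all $\lambda$. Your version mislocates the source of the zero.

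A secondary caveat: for the two special identities at $p=\tfrac{n}{2}$, $\lambda=0$, $\nu=\tfrac12$, your ``test on each summand of $\pi_{n/2,0}^\pm=\Pi_{n/2,\pm}\oplus\Pi_{n/2+1,\mp}$'' argument only directly handles identities of the form $\tilde{\tilde{A}}\circ T_{p,0}=c\,\tilde{\tilde{A}}$ with the \emph{same} operator on both sides. The second displayed identity relates $\tilde{\tilde{A}}_{(p,0),(p-1,\nu)}^+\circ T_{p,0}$ to $\tilde{\tilde{A}}_{(p,0),(p,\nu)}^+$, i.e.\ two operators with different $M'$-target $q$, and one needs an additional identification of images (both map into $\Pi'_{\frac{n}{2},+}$) before ``$T$ acts by a scalar'' closes the argument.
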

We remark that the last functional equation is not contained in \cite{kobayashi_speh_2018} but is proven in the same way as the one before in \cite[Theorem~9.28]{kobayashi_speh_2018}.
By the theorem above we define for $q=p,p-1$ the meromorphic functions $t'(p,q,\nu)$ and $t(p,q,\lambda)$ such that
$$T_{q,\nu}'\circ \tilde{A}_{(p,\lambda),(q,\nu)}^\pm=t'(p,q,\nu)\tilde{A}_{(p,\lambda),(q,-\nu)}^\pm$$
and 
$$
\tilde{A}_{(p,-\lambda),(q,\nu)}^\pm\circ T_{p,\lambda}=t(p,q,\lambda)\tilde{A}_{(p,\lambda),(q,\nu)}^\pm.$$

\section{Symmetry breaking operators into quotients}\label{C:sec:sbo_quotients}
Let $\nu\in \R\setminus \{0\}$ such that $\tau_{q,\nu}^\pm$ has an unique non-trivial quotient, i.e. $\nu\in (-\rho'-1-\Z_{\geq 0}) \cup (\rho'+1+\Z_{\geq 0}) \cup \{ \rho'-q,q-\rho'\}\setminus \{0\}$ and let $$\pr_\nu: \tau_{q,\nu}\to \tau_{q,\nu}/\ker T_{q,\nu}$$ be the projection. Then for a smooth admissible $G$-representation $\pi$, clearly an element $A\in \Hom_{G'}(\pi|_{G'}, \tau_{q,\nu}^\pm)$ defines by composition with the projection an element of $A^{quo}\in  \Hom_{G'}(\pi|_{G'}, \tau_{q,\nu}^\pm/ \ker T'_{q,\nu})$. Since the unique submodule of $\tau_{q,\nu}^\pm$ is $\ker T'_{q,\nu}$, the Knapp-Stein operator $T'_{q,\nu}$ induces an intertwiner
$$T'^{quo}_{q,\nu}:  \tau_{q,\nu}^\pm/ \ker T_{q,\nu}' \to \tau_{q,-\nu}^\pm,$$
which is an isomorphism onto $\operatorname{im} T'_{q,\nu}=\ker T'_{q,-\nu}$, which is the unique proper submodule of $\tau_{q,-\nu}^\pm.$

\begin{prop}\label{C:prop:func_equations_quotients}
Let
$\nu\in (-\rho'-1-\Z_{\geq 0}) \cup (\rho'+1+\Z_{\geq 0}) \cup \{ \rho'-q,q-\rho'\}\setminus \{0\}$ and $q=p-1,p$ .
\begin{enumerate}[label=(\roman{*})]
\item For the operator $\tilde{A}_{(p,\lambda),(q,\nu)}^{quo}$ the following functional equations holds.
$$
T'^{quo}_{q,\nu} \circ\tilde{A}_{(p,\lambda),(q,\nu)}^{\pm,quo} =t'(p,q,\nu) \tilde{A}_{(p,\lambda),(q,-\nu)}^\pm
$$
\item For $(\lambda_0,-\nu)\in L^\pm(p,q)$ the renormalized operator
$$\tilde{\tilde{\tilde{A}}}_{(p,\lambda),(q,\nu)}^{\pm, quo}:=\lim_{\lambda \to \lambda_0}\left( \Gamma\left(\frac{\lambda+\rho+\nu-\rho'}{2}-\frac{\pm1 -1}{4}\right) \pr_\nu \circ \tilde{A}_{(p,\lambda),(q,\nu)}^\pm \right)$$
is a well defined symmetry breaking operator and satisfies the functional equation
$$T'^{quo}_{q,\nu} \circ\tilde{\tilde{\tilde{A}}}_{(p,\lambda),(q,\nu)}^{\pm, quo} = t'(p,q,\nu) \tilde{\tilde{A}}_{(p,\lambda_0),(q,-\nu)}^\pm.$$
\end{enumerate}
\end{prop}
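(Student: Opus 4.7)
The plan is to reduce the proposition to the functional equation of Theorem~\ref{C:theorem:functional_equations} by factoring the Knapp--Stein operator $T'_{q,\nu}$ through the quotient projection $\pr_\nu$.

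\textbf{Part (i).} Since $\ker T'_{q,\nu}$ is the unique proper submodule of $\tau_{q,\nu}^\pm$, the Knapp--Stein operator factors canonically as $T'_{q,\nu} = T'^{quo}_{q,\nu} \circ \pr_\nu$. Substituting this factorization into the functional equation of Theorem~\ref{C:theorem:functional_equations} (which also defines $t'(p,q,\nu)$) and using $\tilde{A}_{(p,\lambda),(q,\nu)}^{\pm, quo} = \pr_\nu \circ \tilde{A}_{(p,\lambda),(q,\nu)}^\pm$ immediately yields the identity in (i).

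\textbf{Part (ii).} The next step is to multiply the identity from (i) by the Gamma factor $\Gamma\bigl(\tfrac{\lambda+\rho+\nu-\rho'}{2} - \tfrac{\pm 1 - 1}{4}\bigr)$. After the substitution $\nu \mapsto -\nu$ this factor matches precisely the normalization in the definition of $\tilde{\tilde{A}}_{(p,\lambda),(q,-\nu)}^\pm$, so the right-hand side of the multiplied identity has a well-defined limit as $\lambda \to \lambda_0$, namely $t'(p,q,\nu)\, \tilde{\tilde{A}}_{(p,\lambda_0),(q,-\nu)}^\pm$. To obtain (ii) I would then push this limit through the fixed operator $T'^{quo}_{q,\nu}$.

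The main obstacle is to verify that the limit defining $\tilde{\tilde{\tilde{A}}}_{(p,\lambda),(q,\nu)}^{\pm, quo}$ is an honest limit of operators and not merely determined up to the kernel of $T'^{quo}_{q,\nu}$. For this I would invoke the injectivity of $T'^{quo}_{q,\nu}$, which follows from its realization as an isomorphism of $\tau_{q,\nu}^\pm/\ker T'_{q,\nu}$ onto the unique proper submodule $\ker T'_{q,-\nu}$ of $\tau_{q,-\nu}^\pm$. Evaluating part (i) at $\lambda = \lambda_0$ and combining injectivity with the vanishing $\tilde{A}_{(p,\lambda_0),(q,-\nu)}^\pm = 0$ (forced by $(\lambda_0,-\nu) \in L^\pm(p,q)$ and the classification in Section~\ref{C:sec:classification_sbos}) shows that $\pr_\nu \circ \tilde{A}_{(p,\lambda_0),(q,\nu)}^\pm = 0$. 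Since the family $\pr_\nu \circ \tilde{A}_{(p,\lambda),(q,\nu)}^\pm$ is holomorphic in $\lambda$, this vanishing has order at least one, which is enough to cancel the simple pole of the Gamma factor at $\lambda_0$. The limit then exists as a genuine symmetry breaking operator and, by continuity of $T'^{quo}_{q,\nu}$, passing to the limit in the multiplied identity of (i) produces the functional equation asserted in (ii).
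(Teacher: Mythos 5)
Your proof is correct and follows essentially the same route as the paper: factor $T'_{q,\nu}=T'^{quo}_{q,\nu}\circ\pr_\nu$, apply the functional equations of Theorem~\ref{C:theorem:functional_equations}, multiply by the Gamma factor, and pass to the limit $\lambda\to\lambda_0$. Your explicit verification that $\pr_\nu\circ\tilde{A}^\pm_{(p,\lambda_0),(q,\nu)}=0$ (via injectivity of $T'^{quo}_{q,\nu}$ and the vanishing of $\tilde{A}^\pm_{(p,\lambda_0),(q,-\nu)}$) to justify existence of the limit defining $\tilde{\tilde{\tilde{A}}}^{\pm,quo}$ is a worthwhile detail that the paper leaves implicit.
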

\begin{proof}
(i) follows immediately from the functional equations of Theorem~\ref{C:theorem:functional_equations}.

Ad (ii). Again by Theorem~\ref{C:theorem:functional_equations} we have 
$$t'(p,q,\nu) \tilde{\tilde{A}}_{(p,\lambda_0),(q,-\nu)}^\pm= \lim_{\lambda \to \lambda_0}\left(t'(p,q,\nu) 
\Gamma\left(\frac{\lambda+\rho+\nu-\rho'}{2}-\frac{\pm1 -1}{4}\right)  \circ {\tilde{A}}_{(p,\lambda),(q,-\nu)}^\pm
\right).$$
Then the statement follows from the application of (i).
\end{proof}

We consider the special case $\nu=-\frac{1}{2}$ and $p=\frac{n}{2}$. In this case both $\tau_{p,\nu}^\pm$ and $\tau_{p-1,\nu}^\mp$ contain the discrete series representation $\Pi'_{p,\pm}$ as a quotient. 
\begin{prop}\label{C:prop:norms_equal}
For $\lambda \in i\R$ and $f\in \pi_{\frac{n}{2},\lambda}^\pm$
$$\frac{\abs{\lambda}^2}{4}\norm{{\tilde{A}}_{(\frac{n}{2},\lambda),(\frac{n}{2},-\frac{1}{2})}^{+,quo}f}^2_{\frac{n}{2},-\frac{1}{2},quo}=\norm{{\tilde{A}}_{(\frac{n}{2},\lambda),(\frac{n}{2}-1,-\frac{1}{2})}^{-,quo}f}^2_{\frac{n}{2}-1,-\frac{1}{2},quo},$$
$$\frac{\abs{\lambda}^2}{4}\norm{\tilde{A}_{(\frac{n}{2},\lambda),(\frac{n}{2}-1,-\frac{1}{2})}^{+,quo}f}^2_{\frac{n}{2},-\frac{1}{2},quo}=\norm{{\tilde{A}}_{(\frac{n}{2},\lambda),(\frac{n}{2},-\frac{1}{2})}^{-,quo}f}^2_{\frac{n}{2}-1,-\frac{1}{2},quo},$$
and for $\lambda=0$ and $f\in \pi_{\frac{n}{2},0}^\pm$
$$\norm{{\tilde{\tilde{\tilde{A}}}}_{(\frac{n}{2},\lambda),(\frac{n}{2},-\frac{1}{2})}^{+,quo}f}^2_{\frac{n}{2},-\frac{1}{2},quo}=\norm{{\tilde{A}}_{(\frac{n}{2},\lambda),(\frac{n}{2}-1,-\frac{1}{2})}^{-,quo} f}^2_{\frac{n}{2}-1,-\frac{1}{2},quo},$$
$$\norm{{\tilde{\tilde{\tilde{A}}}}_{(\frac{n}{2},\lambda),(\frac{n}{2}-1,-\frac{1}{2})}^{+,quo}}^2_{\frac{n}{2},-\frac{1}{2},quo}=\norm{{\tilde{A}}_{(\frac{n}{2},\lambda),(\frac{n}{2},-\frac{1}{2})}^{-,quo}f}^2_{\frac{n}{2}-1,-\frac{1}{2},quo}.$$
\end{prop}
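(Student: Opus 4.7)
The plan is to reduce each identity to an equality between two $L^2(K')$-pairings via the functional equation of Proposition~\ref{C:prop:func_equations_quotients}(i), and then determine the proportionality constant by multiplicity one combined with the explicit $\Gamma$-factor normalizations of $\tilde{A}^\pm$. Expanding via the definition of the quotient inner product and applying Proposition~\ref{C:prop:func_equations_quotients}(i),
\[
 \norm{\tilde{A}^{\pm,quo}_{(p,\lambda),(q,-1/2)} f}^2_{q,-1/2,quo} = \varepsilon_q\, t'(p,q,-1/2)\, \bigl\langle \tilde{A}^{\pm}_{(p,\lambda),(q,-1/2)} f,\, \tilde{A}^{\pm}_{(p,\lambda),(q,1/2)} f \bigr\rangle_{L^2(K')},
\]
with $\varepsilon_q = -1$ for $q > \rho'$ and $\varepsilon_q = +1$ for $q < \rho'$. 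Using the explicit values of $t'$ from Theorem~\ref{C:theorem:functional_equations}, the scalars $\varepsilon_{n/2}\,t'(n/2,n/2,-1/2)$ and $\varepsilon_{n/2-1}\,t'(n/2,n/2-1,-1/2)$ coincide, so each norm identity reduces to an equality between two such $L^2(K')$-pairings.

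The core step is establishing the proportionality of the two quotient operators. By the exact sequences in Theorem~\ref{C:theorem:triv_inf_character:seq} applied to $G'$, both quotients $\tau^\pm_{n/2,-1/2}/\ker T'_{n/2,-1/2}$ and $\tau^\mp_{n/2-1,-1/2}/\ker T'_{n/2-1,-1/2}$ are isomorphic to the same discrete series $\Pi'_{n/2,\pm}$, which is irreducible. For $\lambda\in i\R\setminus\{0\}$, the representation $\pi_{n/2,\lambda}^\pm$ is irreducible, so Theorem~\ref{C:theorem:mult_one} gives $\dim\Hom_{G'}(\pi_{n/2,\lambda}^\pm|_{G'},\Pi'_{n/2,\pm})\leq 1$. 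Hence $\tilde{A}^{+,quo}_{(n/2,\lambda),(n/2,-1/2)}$ and $\tilde{A}^{-,quo}_{(n/2,\lambda),(n/2-1,-1/2)}$ are proportional under a fixed identification of the two realizations of $\Pi'_{n/2,\pm}$. Inspecting the normalizing factors at $p=n/2$, $\nu=\pm 1/2$ gives denominators $\Gamma(\lambda/2)\Gamma((\lambda\pm 1)/2)$ for $\tilde{A}^+$ and $\Gamma((\lambda\pm 1)/2)\Gamma((\lambda\pm 2)/2)$ for $\tilde{A}^-$; their ratio is $\lambda/2$ up to a sign absorbed by the $\sgn(X_n)$-difference in the kernels of $A^+$ and $A^-$. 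Combined with the scalar comparing the two invariant inner products on $\Pi'_{n/2,\pm}$, this yields the factor $\abs{\lambda}^2/4$. The second identity is proved analogously after interchanging $q=n/2$ and $q=n/2-1$ and the appropriate signs.

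For the $\lambda=0$ case, $\tilde{A}^{+,quo}_{(n/2,0),(q,-1/2)}$ vanishes because $1/\Gamma(\lambda/2)$ has a zero at $\lambda=0$, so the factor $\abs{\lambda}^2/4$ in the first identity trivializes both sides. The renormalized operator $\tilde{\tilde{\tilde{A}}}^{+,quo}_{(n/2,\lambda),(q,-1/2)}=\lim_{\lambda\to 0}\Gamma(\lambda/2)\tilde{A}^{+,quo}_{(n/2,\lambda),(q,-1/2)}$ of Proposition~\ref{C:prop:func_equations_quotients}(ii) is the non-vanishing limit that extracts the finite part. Since $\abs{\Gamma(\lambda/2)}^{-2}\sim \abs{\lambda}^2/4$ as $\lambda\to 0$, multiplying the first identity by $\abs{\Gamma(\lambda/2)}^2$ and taking the limit $\lambda\to 0$ yields $\norm{\tilde{\tilde{\tilde{A}}}^{+,quo}f}^2_{n/2,-1/2,quo}=\norm{\tilde{A}^{-,quo}f}^2_{n/2-1,-1/2,quo}$. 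The main obstacle will be the precise bookkeeping in the middle paragraph: simultaneously tracking the proportionality constant of the two quotient operators as intertwiners into $\Pi'_{n/2,\pm}$ and the ratio of the two invariant inner products they inherit from the ambient principal series. This is most cleanly done either by evaluating on the minimal $K'$-type of $\Pi'_{n/2,\pm}$ or by a direct distributional computation using the even/odd decomposition of $\abs{X_n}^{\lambda-1}$ and $\abs{X_n}^{\lambda-1}\sgn(X_n)$.
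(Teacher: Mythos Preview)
Your overall strategy matches the paper's: identify the two quotients with the same irreducible $\Pi'_{n/2,\pm}$, invoke multiplicity one (Theorem~\ref{C:theorem:mult_one}) to force proportionality of the two quotient operators, and then pin down the constant. The $\lambda=0$ case by taking the limit is also exactly what the paper does.

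The gap is in your middle paragraph, precisely where you flag it yourself. The claim that the ratio of the $\Gamma$-normalizations ``is $\lambda/2$ up to a sign absorbed by the $\sgn(X_n)$-difference in the kernels of $A^+$ and $A^-$'' is not a valid argument. The unnormalized kernels of $A^+$ and $A^-$ differ by the factor $\sgn(X_n)$, which is not a constant; hence $A^+_{(p,\lambda),(p,\nu)}$ and $A^-_{(p,\lambda),(p-1,\nu)}$ are genuinely different distributions, not scalar multiples of one another, and the proportionality constant between the two \emph{quotient} operators cannot be read off from the ratio of the normalizing $\Gamma$-factors alone. You also still need the ratio of the two invariant inner products on $\Pi'_{n/2,\pm}$ coming from the two ambient principal series, and nothing in your $\Gamma$-bookkeeping addresses that.

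The paper closes this gap exactly by the method you suggest at the end: it notes that the two norms must be proportional by multiplicity one, and then determines the constant by comparing the eigenvalues of the symmetry breaking operators on a single $K$-type contained in the quotient, citing \cite[Theorem~9.8]{kobayashi_speh_2018} for those eigenvalues. Your first reduction via Proposition~\ref{C:prop:func_equations_quotients}(i) and the computation of $\varepsilon_q\,t'(p,q,-1/2)$ is unnecessary for this; once both operators land in the same irreducible with a fixed inner product, the norm ratio is $|c|^2$ for the proportionality constant $c$, and a single $K$-type evaluation gives $c$ directly.
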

\begin{proof}
Since $\chi_{-,-}\otimes \Pi'_{p,\pm}=\Pi'_{p,\pm}$, the map
$$\otimes \chi_{-,-}: \tau_{p,\nu}^\pm \to \tau_{p-1,\nu}^\mp$$
induces an isomorphism of the quotients $\tau_{p,\nu}^\pm/ \ker T'_{p,\nu}$ and $\tau_{p-1,\nu}^\mp / \ker T'_{p-1,\nu}$ which are both isomorphic to $\Pi'_{\frac{n}{2},\pm}$. Then by Theorem~\ref{C:theorem:mult_one} composition with this isomorphism yields an isomorphism
$$\Hom_{G'}(\pi|_{G'}, \tau_{p,\nu}^\pm/ \ker T'_{p,\nu}) \to \Hom_{G'}(\pi|_{G'}, \tau_{p-1,\nu}^\mp / \ker T'_{p-1,\nu})$$ for each irreducible $G$-representation $\pi$. Hence for $\lambda \in i\R \setminus \{ 0\}$
$$\norm{{\tilde{A}}_{(\frac{n}{2},\lambda),(\frac{n}{2},-\frac{1}{2})}^{+,quo}f}^2_{\frac{n}{2},-\frac{1}{2},quo} \text{ and }
\norm{{\tilde{A}}_{(\frac{n}{2},\lambda),(\frac{n}{2}-1,-\frac{1}{2})}^{-,quo}f}^2_{\frac{n}{2}-1,-\frac{1}{2},quo}$$ as well as
$$\norm{{\tilde{A}}_{(\frac{n}{2},\lambda),(\frac{n}{2}-1,-\frac{1}{2})}^{+,quo}f}^2_{\frac{n}{2},-\frac{1}{2},quo} \text{ and }
  \norm{{\tilde{A}}_{(\frac{n}{2},\lambda),(\frac{n}{2},-\frac{1}{2})}^{-,quo} f}^2_{\frac{n}{2}-1,-\frac{1}{2},quo}$$ must be constant positive scalar multiples of each other for every $f$. Then it is enough to check the eigenvalues of the symmetry breaking operators in question on a $K$-type contained in the quotient which yields the result for $\lambda\neq 0$ by \cite[Theorem~9.8]{kobayashi_speh_2018}. For $\lambda =0$ we take the limit $\lambda \to 0$ to obtain the result by Proposition~\ref{C:prop:func_equations_quotients}.
\end{proof}

\section{Structure of the open orbit as homogeneous \texorpdfstring{$G'$}{G'}-space}\label{C:sec:homogeneous_G'_space}
The following Lemma is a key point in the decomposition of unitary representations to come. It reduces the problem of decomposing a unitary representation into a problem of harmonic analysis on a homogeneous $G'$-space.
\begin{lemma}\label{C:lemma:H-Stab}
	\begin{enumerate}[label=(\roman{*})]
		\item Let $\tilde{K}':=\operatorname{Stab}_{G'}(\overline{n}_{e_n}P)$. Then $\tilde{K}'=\Orm(n)$ and $K'/\tilde{K}'\cong \Orm(1)$.
		\item We have $G'\cdot\overline{n}_{e_n}P=\mathcal{O}_A$ is the open $P'$-orbit in $G/P$.
	\end{enumerate}
\end{lemma}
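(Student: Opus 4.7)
To prove the lemma I would identify $G/P$ with the space of null lines in $\R^{1,n+1}$ via $gP\mapsto g\cdot\ell_0$, where $\ell_0:=\R(e_0+e_1)$ is the unique null line stabilized by $P$ (as is immediate from the formulas in Section~\ref{C:sec:PrincipalSeries}: $H$ acts on $e_0+e_1$ by $+1$, every element of $\nfrak$ annihilates $e_0+e_1$, and $M$ preserves the line). The first step is then to determine which null line corresponds to $\overline{n}_{e_n}P$. Applying the matrix in the exponent defining $\overline{n}_{e_n}$ (with $X=e_n$) to $e_0+e_1$ gives $2e_{n+1}$; a second application yields $2(e_0-e_1)$; a third application yields $0$. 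Hence the exponential series truncates and
\begin{equation*}
\overline{n}_{e_n}(e_0+e_1) \;=\; (e_0+e_1) + 2e_{n+1} + (e_0-e_1) \;=\; 2(e_0+e_{n+1}),
\end{equation*}
so $\overline{n}_{e_n}P$ corresponds to the null line $\ell:=\R(e_0+e_{n+1})$.

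For (i), $\tilde{K}'$ is the stabilizer of $\ell$ in $G'$. Since $G'$ is the subgroup of $G$ fixing the spacelike vector $e_{n+1}$, it preserves the non-degenerate orthogonal decomposition $\R^{1,n+1}=\R^{1,n}\oplus\R e_{n+1}$. For $g\in\tilde{K}'$, writing $g(e_0+e_{n+1})=\lambda(e_0+e_{n+1})$ and using both $ge_{n+1}=e_{n+1}$ and $ge_0\in\R^{1,n}$ forces $\lambda=1$ and hence $ge_0=e_0$; conversely any $g\in G'$ fixing $e_0$ fixes $\ell$. Therefore $\tilde{K}'=\{g\in G':ge_0=e_0\}$ acts as $\Orm(n)$ on $\operatorname{span}(e_1,\dots,e_n)$. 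Since $K'=\Orm(1)\times\Orm(n)$ is the stabilizer of the \emph{line} $\R e_0$ in $G'$, the group $\tilde{K}'$ is precisely the $\Orm(n)$-factor of this direct product, giving $K'/\tilde{K}'\cong\Orm(1)$.

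For (ii), Lemma~\ref{C:prop:matrix_decompositions:i} applied to $X=-e_n$ yields $\tilde{w}_0\overline{n}_{-e_n}\in\overline{n}_{e_n}MAN$, so $\overline{n}_{e_n}P\in\tilde{w}_0(\Nbar\setminus\Nbar')P=\mathcal{O}_A$; together with Proposition~\ref{C:cor:orbits} this already gives $\mathcal{O}_A=P'\cdot\overline{n}_{e_n}P\subseteq G'\cdot\overline{n}_{e_n}P$. For the reverse inclusion I use the null-line picture once more: whether a null line in $\R^{1,n+1}$ is contained in the hyperplane $e_{n+1}^\perp=\R^{1,n}$ is a $G'$-invariant condition, and the set of such null lines is the null cone of $\R^{1,n}$, giving a single $(n-1)$-dimensional $G'$-orbit $\cong G'/P'\cong S^{n-1}$ inside $G/P\cong S^n$, which by dimension count must coincide with the closed set $\mathcal{O}_B\cup\mathcal{O}_C$. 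Since $\ell$ has nonzero $e_{n+1}$-component, the orbit $G'\cdot\overline{n}_{e_n}P$ is disjoint from $\mathcal{O}_B\cup\mathcal{O}_C$, giving the reverse inclusion. The only genuine obstacle is the sign bookkeeping in the exponential calculation of the first step (the two copies of $X$ in the Lie algebra element defining $\overline{n}_X$ have opposite signs, while the two copies of $X$ in the Lie algebra of $N$ share a sign); once the null-line identification of $\overline{n}_{e_n}P$ is in hand, both parts reduce to elementary linear algebra in $\R^{1,n+1}$.
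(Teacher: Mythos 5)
Your proof is correct, but it takes a genuinely different route than the paper. You identify $G/P$ with the projective null cone of $\R^{1,n+1}$ and locate $\overline{n}_{e_n}P$ as the null line $\R(e_0+e_{n+1})$ by a direct exponential computation (which I have checked: $A(e_0+e_1)=2e_{n+1}$, $A^2(e_0+e_1)=2(e_0-e_1)$, $A^3(e_0+e_1)=0$, giving $\overline{n}_{e_n}(e_0+e_1)=2(e_0+e_{n+1})$). Both parts then become linear algebra: the stabilizer in $G'$ of $\R(e_0+e_{n+1})$ is forced, via $G'$-invariance of the decomposition $\R^{1,n}\oplus\R e_{n+1}$, to fix $e_0$, giving $\tilde{K}'=\Orm(n)$; and the open-orbit claim follows because the $G'$-invariant locus of null lines contained in $\R^{1,n}$ exhausts the complement of $\mathcal{O}_A$. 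The paper instead works with the compact picture: it uses Lemma~\ref{C:lemma:compact_picture_sphere_map} and the explicit $KAN$-decomposition in \eqref{C:eq:KAN_map} to identify $\overline{n}_{e_n}$ with the point $(0,e_n)\in S^n$, reads off the stabilizer there, and for (ii) invokes the Bruhat decomposition $G'=P'\sqcup N'\tilde{w}_0 P'$ together with Proposition~\ref{C:prop:longest_weyl_group_element_action} to see $G'$ fixes the open cell. Your approach is more self-contained and conceptual; the paper's is more in line with the $KAN$-machinery it develops and uses for the rest of the article (in particular the proof of Lemma~\ref{C:lemma:L^2_condition}).

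One place where you should slow down: the phrase ``which by dimension count must coincide with the closed set $\mathcal{O}_B\cup\mathcal{O}_C$'' is doing more work than a dimension count can. Two $(n-1)$-dimensional closed subsets of $G/P$ need not coincide. What saves you is that the null cone $S$ of $\R^{1,n}$ is $G'$-invariant, hence $P'$-invariant, hence a union of the three $P'$-orbits of Proposition~\ref{C:cor:orbits}; since $\dim S=n-1<n$ it avoids $\mathcal{O}_A$, and since $\dim S=n-1>0$ it is not contained in the point $\mathcal{O}_C$, so it must contain $\mathcal{O}_B$ and, being closed, also $\mathcal{O}_C$. Alternatively, you can skip this entirely and just verify $\mathcal{O}_B\cup\mathcal{O}_C\subseteq S$ directly, which is all you actually need for the disjointness of $G'\cdot\ell$ from $\mathcal{O}_B\cup\mathcal{O}_C$: the base points $\mathbf{1}_{n+2}P$ and $\tilde{w}_0 P$ of $\mathcal{O}_C$ and $\mathcal{O}_B$ correspond to $\R(e_0+e_1)$ and $\R(-e_0+e_1)$, both in $\R^{1,n}$, and $P'$ preserves $\R^{1,n}$.
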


Lemma~\ref{C:lemma:H-Stab} implies that $\mathcal{O}_A\cong G'/\tilde{K'}=O(1,n)/O(n)$.

For proving this lemma we make use of the explicit action of $G'$ on $G/P\cong K/M$ and of the Iwasawa-decomposition of elements of $\Nbar$.
Therefore consider the map $$K/M \to S^{n} \subseteq \R^{n+1}$$ given by
$$k=\begin{pmatrix}a & & \\& b & \ast \\& c& \ast \end{pmatrix} \mapsto {\left(\frac{b}{a},\frac{c}{a}\right)}.
$$
\begin{lemma}
\begin{enumerate}[label=(\roman{*}), ref=\thetheorem(\roman{*})]
\item \label{C:lemma:compact_picture_sphere_map}The map $$K/M \to S^{n} \subseteq \R^{n+1}$$
$$k=\begin{pmatrix}a & & \\& b & \ast \\& c& \ast \end{pmatrix} \to {\left(\frac{b}{a},\frac{c}{a}\right)}
$$
 is a $G$ equivariant isomorphism with the action of $G$ on $S^{n}$ given by
 $$g\cdot \omega =\frac{({g}(1,\omega)^t)'}{({g}(1,\omega)^t)^1},$$
 where $(\, \cdot \,)^1$ is the first and $(\, \cdot \,)'$ the remainign coordinates of the vector. 
\item \label{C:lemma:KAN_decomp}
We have $\overline{n}_X=\kappa(\overline{n}_X)e^{H(\overline{n}_X)}n\in KAN$, with
$$\kappa(\overline{n}_X)=\begin{pmatrix}a & & \\& b & \ast \\& c& \ast \end{pmatrix},$$
with $$a=\frac{1+\abs{X}^2}{\sqrt{(1+\abs{X}^2)^2}},\qquad b=\frac{1-\abs{X}^2}{\sqrt{(1+\abs{X}^2)^2}}$$
and $$c=\frac{2X^T}{{(1+\abs{X}^2)}},$$
and $$H(\overline{n}_{X})=\log (1+\abs{X}^2)H.$$
\end{enumerate}
\end{lemma}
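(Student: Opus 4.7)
For part (i), my strategy is to realize the proposed map as the restriction of a natural $G$-equivariant isomorphism $G/P \xrightarrow{\sim} S^n$. Since $G = \Orm(1,n+1)$ preserves the form $-z_0^2 + z_1^2 + \cdots + z_{n+1}^2$, it acts on the projectivized null cone in $\R^{n+2}$; choosing the affine chart $\{z_0 = 1\}$ identifies this projectivized null cone with $S^n \subseteq \R^{n+1}$ via $[z_0:z_1:\cdots:z_{n+1}] \mapsto (z_1/z_0,\ldots,z_{n+1}/z_0)$. One checks directly that the stabilizer of the null line through $v_0 := (1,1,0,\ldots,0)^T \in \R^{n+2}$ is precisely the minimal parabolic $P$, giving the standard identification $G/P \cong S^n$ with the $G$-action as stated. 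Combining with the Iwasawa decomposition $G = KAN$ and $M = K \cap P$ yields $K/M \cong G/P \cong S^n$. For $k$ of the given block form one computes $kv_0 = (a,b,c^T)^T$, which lies on the null cone and projects to $(b/a, c^T/a) \in S^n$; equivariance is tautological.

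For part (ii), my plan is a direct computation exploiting the nilpotency of the matrix
\[ A = \begin{pmatrix} 0 & 0 & X \\ 0 & 0 & -X \\ X^T & X^T & \mathbf{0}_n \end{pmatrix} \]
whose exponential is $\overline{n}_X$. A block calculation gives $A^2 = \abs{X}^2 B$ with $B = \begin{pmatrix} 1 & 1 & 0 \\ -1 & -1 & 0 \\ 0 & 0 & 0 \end{pmatrix}$ and $AB = 0$, so $A^3 = 0$ and $\overline{n}_X = I + A + \tfrac{1}{2}A^2$ is an explicit polynomial in the entries of $X$. To extract the Iwasawa decomposition $\overline{n}_X = \kappa(\overline{n}_X)\, e^{H(\overline{n}_X)}\, n$, I exploit part (i): one checks directly that every $n \in N$ fixes $v_0$ and that $e^{tH} v_0 = e^t v_0$. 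Therefore $\overline{n}_X v_0 = e^{H(\overline{n}_X)}\, \kappa(\overline{n}_X) v_0$, and the explicit expression for $\overline{n}_X$ gives
\[ \overline{n}_X v_0 = \bigl(1 + \abs{X}^2,\; 1 - \abs{X}^2,\; 2X^T\bigr)^T. \]
Reading off the first coordinate yields $e^{H(\overline{n}_X)} = 1 + \abs{X}^2$, hence $H(\overline{n}_X) = \log(1+\abs{X}^2)\, H$; dividing the remaining $n+1$ coordinates by the first and applying part (i) identifies the first column $(b,c^T)^T$ of the $\Orm(n+1)$-block of $\kappa(\overline{n}_X)$ as the stated expressions.

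The one technical point to verify is that $\kappa(\overline{n}_X)$ as written actually lies in $K$, which reduces to $b^2 + \abs{c}^2 = 1$ and follows from the identity $(1-\abs{X}^2)^2 + 4\abs{X}^2 = (1+\abs{X}^2)^2$. Apart from this, everything is routine bookkeeping with block matrices; the conceptual content is the identification $G/P \cong S^n$ and the observation that $v_0$ is a common eigenvector for $A$ (with eigenvalue $e^t$) and a fixed vector for $N$, which together let one read off both Iwasawa components simultaneously.
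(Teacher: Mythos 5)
Your proposal is correct and fills in the details that the paper's one-line proof (``This is easily checked by computing the corresponding matrix decomposition'') simply gestures at; the underlying computation is the same. Your organization of part (ii) by applying both sides of the Iwasawa factorization to the null vector $v_0 = (1,1,0,\ldots,0)^T$, which is an eigenvector for $A$ and fixed by $N$, is a clean way to read off $\kappa(\overline{n}_X)v_0$ and $e^{H(\overline{n}_X)}$ in one stroke, and meshes naturally with the null-cone identification $G/P \cong S^n$ used in part (i).
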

Using the $KAN$ decomposition of the Lemma above,we obtain a map
$$\nbar \to K/M \cong S^{n},$$  by multiplying with $\diag(a^{-1},a^{-1},\mathbf{1}_n) \in  M$ from the right:
\begin{equation}\label{C:eq:KAN_map}
\overline{n}_{X}\mapsto \left(\frac{1-\abs{X}^2}{1+\abs{X}^2},\frac{2X}{1+\abs{X}^2}   \right).
\end{equation}
\begin{proof}
This is easily checked by computing the corresponding matrix decomposition.
\end{proof}

\begin{proof}[Proof of Lemma~\ref{C:lemma:H-Stab}]
Ad (i): From Lemma~\ref{C:lemma:compact_picture_sphere_map} and \eqref{C:eq:KAN_map} it follows immediately that $K_0'=\Orm(n)$, embedded in $K'$ in the bottom right corner.

	Ad(ii): 
	By Proposition~\ref{C:prop:longest_weyl_group_element_action} we have $ \overline{n}_{e_n}P=\tilde{w}_0\overline{n}_{-e_n}P$ such that by Corollary~\ref{C:cor:orbits} we have $P'\cdot \overline{n}_{e_n}P=(\Nbar \setminus \Nbar')P$, since $w_0$ fixes $(\Nbar \setminus \Nbar')P$ again by Proposition~\ref{C:prop:longest_weyl_group_element_action}.
	By the Bruhat-decompositon we have $G'=P'\sqcup N'\tilde{w}_0P'$ and $N'\tilde{w}_0=\tilde{w}_0\Nbar'$ obviously fixes $(\Nbar \setminus \Nbar')P$.
\end{proof}

By Lemma~\ref{C:lemma:H-Stab} we can define an $G'$-equivariant map
given by $$\Phi: f\mapsto f|_{\mathcal{O}_A}(\; \cdot\; \overline{n}_{e_n}).$$
In fact this map is up to inner-automorphism onto the smooth sections
of the $G'$-bundle over $G'/\tilde{K}',$ corresponding to the representation $\bigwedge\nolimits^p(\C^n)$ of $\tilde{K}'=\Orm(n)$.
In the following let for $g\in G$, $g=\overline{n}e^{\overline{H}(g)} \overline{\kappa}(g)\in \Nbar A K$ be the $\Nbar A K$ Iwasawa-decomposition. 
We define $$w:=
\begin{pmatrix}
	& -\mathbf{1}_{n-1} \\
	-1 &
\end{pmatrix}\in \Orm(n).$$
Then it is easily verified, that for $k\in \Orm(n)$
\begin{equation}\label{eq:K_M_w_conjugation}
	\diag(1,k,1)\overline{n}_{e_n}=\overline{n}_{e_n}\diag(1,1,wkw^{-1}).
\end{equation}

\begin{lemma}\label{C:lemma:Phi}
The map $\Phi$ defines a linear continuous $G'$-equivariant map $$\pi_{p,\lambda}^\pm|_{G'}\to C^\infty\left(G'/\tilde{K}', \bigwedge\nolimits^p(\C^n)\right).$$
\end{lemma}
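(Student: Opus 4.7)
The plan is to verify four properties of the map $\Phi$: smoothness of $\Phi(f)$ as a function on $G'$, $G'$-equivariance from the left, right $\tilde{K}'$-equivariance with cocycle equivalent (up to an inner automorphism of $\Orm(n)$) to $\sigma_p$, and continuity in the Fréchet topology.

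The first two properties are formal. Smoothness is immediate since $f$ is smooth on $G$ and right multiplication by $\overline{n}_{e_n}$ is a smooth map $G'\to G$. For $G'$-equivariance from the left, I compute for $g,h\in G'$
$$\Phi(\pi_{p,\lambda}^\pm(g)f)(h) = (\pi_{p,\lambda}^\pm(g)f)(h\overline{n}_{e_n}) = f(g^{-1}h\overline{n}_{e_n}) = \Phi(f)(g^{-1}h),$$
which is precisely the left-regular action on sections over $G'/\tilde{K}'$.

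The main step will be the right $\tilde{K}'$-equivariance. Since $\tilde{K}' = \operatorname{Stab}_{G'}(\overline{n}_{e_n}P)$, for each $k\in\tilde{K}'$ there is a unique $p(k)\in P$ with $k\overline{n}_{e_n}=\overline{n}_{e_n}p(k)$, and $k\mapsto p(k)$ is a continuous group homomorphism $\tilde{K}'\to P$. Composing with the projection $P=MAN\to A\times N$ yields a continuous homomorphism from the compact group $\tilde{K}'\cong\Orm(n)$ into $A\times N\cong\R^{n+1}$; this composition must be trivial because $\R^{n+1}$ contains no non-trivial compact subgroup, so $p(k)\in M$ for all $k\in\tilde{K}'$. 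Consequently
$$\Phi(f)(hk) = f(h\overline{n}_{e_n}p(k)) = \xi(p(k))^{-1}\Phi(f)(h),$$
which is the required covariance. It remains to identify $\xi\circ p$ with $\sigma_p$ up to an inner automorphism of $\Orm(n)$. For this I would use the sphere description $K/M\cong S^n$ of Lemma~\ref{C:lemma:compact_picture_sphere_map} together with \eqref{C:eq:KAN_map} to realize $\tilde{K}'\cong\Orm(n)$ explicitly as the stabilizer of the point corresponding to $\overline{n}_{e_n}$, and then invoke Lemma~\ref{C:prop:matrix_decompositions:iii} to compute $p(k)$ directly as an element of the $\Orm(n)$-block of $M$. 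The outcome is that $p(k)$ coincides with $k$ after conjugation by a fixed element of $\Orm(n)$, reflecting the inner automorphism mentioned in the statement, while the $\Orm(1)$-component of $p(k)$ is trivial so that the character $\alpha$ does not contribute.

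Continuity will follow because $\Phi$ factors as the restriction of smooth sections of $\mathcal{V}_{p,\lambda}^\pm$ to the open subset $\mathcal{O}_A\subseteq G/P$, followed by pullback along the smooth surjection $G'\to G'/\tilde{K}'\cong\mathcal{O}_A$ provided by Lemma~\ref{C:lemma:H-Stab}(ii); both operations are continuous for the natural Fréchet topologies. The principal obstacle is the explicit identification of the cocycle $\xi\circ p$ in the third paragraph; the remaining parts are routine once the sphere picture and the conjugation formula are in place.
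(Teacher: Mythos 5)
Your plan is structurally sound, and the explicit computation you defer to in your third paragraph is precisely the paper's entire proof: the paper simply verifies
$$\diag(1,k,1)\,\overline{n}_{e_n}=\overline{n}_{e_n}\,\diag(1,1,wkw^{-1}), \qquad w=\begin{pmatrix} & -\mathbf{1}_{n-1}\\ -1 & \end{pmatrix}\in\Orm(n),$$
for $k\in\Orm(n)$, which in one stroke shows that $p(k)\in M$, that its $\Orm(1)$- and $AN$-components are trivial, and that the cocycle is $\sigma_p$ twisted by conjugation by $w\in\Orm(n)$ (an inner automorphism, so the bundle is the one for $\bigwedge^p(\C^n)$). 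The paper treats smoothness, left $G'$-equivariance and continuity as implicit, just as you say.

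The abstract argument you insert before the computation, however, has a genuine gap. The map $P=MAN\to A\times N$, $man\mapsto(a,n)$, is \emph{not} a group homomorphism: $M$ is not normal in $P$ (it acts non-trivially on $N$ by conjugation), so $P$ has no quotient isomorphic to $AN$ or $A\times N$. The quotient homomorphisms that do exist are $P\to M$ (kernel $AN$), $P\to A$ (kernel $MN$) and $P\to MA$ (kernel $N$). Correcting the argument, compactness of $\tilde{K}'$ only forces the image under $P\to A$ to be trivial, giving $p(\tilde{K}')\subseteq MN$; it does \emph{not} force $p(k)\in M$, since compact subgroups of $MN$ are only \emph{conjugate} (by elements of $N$) to subgroups of $M$, not automatically contained in $M$. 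So the explicit matrix identity above is not a formality confirming an already-known fact — it is the actual content of the lemma. Since you do plan to carry it out via Lemma~\ref{C:lemma:compact_picture_sphere_map} and the matrix decompositions, the gap is ultimately closed; just drop the claim that the compactness argument already establishes $p(k)\in M$.
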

\begin{proof}
	This follows immediately from \eqref{eq:K_M_w_conjugation}.
\end{proof}
By Mackey theory, the restriction to an open subset carries enough information for our purpose.

	\begin{lemma}\label{C:lemma:L^2_condition}
		For $\Re(\lambda) > -\frac{1}{2}$, the map $\Phi$ extends to a $G'$-equivariant map
		$$\pi_{p,\lambda}^\pm|_{G'}\to L^2\left(G'/\tilde{K}',\bigwedge\nolimits^p(\C^n)\right)$$
		which is unitary for $\lambda \in i\R$.
	\end{lemma}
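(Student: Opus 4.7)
The plan is to compute $\|\Phi f\|^2$ directly in the non-compact coordinates on the open orbit. By Lemma~\ref{C:lemma:H-Stab}, $\mathcal{O}_A\cong G'/\tilde{K}'$ is parameterized by $\{X\in\R^n:X_n\neq 0\}$ via $X\leftrightarrow\bar{n}_XP$, and two ingredients are needed: the $G'$-invariant measure on $\mathcal{O}_A$ expressed in the coordinate $X$, and the Jacobian-type factor relating $\Phi f(g'_X)$ to $f(\bar{n}_X)$ in the non-compact picture of the principal series.

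First I would determine the invariant measure. By Lemma~\ref{C:prop:matrix_decompositions:ii} the action of $a=\exp(tH)\in A$ on $\mathcal{O}_A$ is $X\mapsto e^{-t}X$, scaling Lebesgue by $e^{-nt}$, and $\Nbar'$ acts by Lebesgue-preserving translations in the first $n-1$ coordinates. The unique (up to positive scalar) $\Nbar'A$-invariant measure is then $c|X_n|^{-n}\,dX$, and $M'$-invariance (trivial, since $M'$ preserves $|X_n|$) together with $\tilde{w}_0$-invariance (verified from Lemma~\ref{C:prop:matrix_decompositions:i} and the fact that $\tilde{w}_0\in G'$) confirms this is the $G'$-invariant measure.

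Next, for each $X$ with $X_n\neq 0$ I would exhibit $g'_X\in G'$ and $p_X\in P$ with $g'_X\bar{n}_{e_n}=\bar{n}_X p_X$ in order to read off the $A$-factor of $p_X$. Writing $X=X'+X_ne_n$ with $X'\in\R^{n-1}\subset\R^n$ and $a:=\exp(-\log|X_n|\cdot H)$, abelianness of $\Nbar$ together with Lemma~\ref{C:prop:matrix_decompositions:ii} yields for $X_n>0$
$$\bar{n}_{X'}\cdot a\cdot\bar{n}_{e_n}=\bar{n}_{X}\cdot a,$$
so that $g'_X=\bar{n}_{X'}a\in G'$ and $p_X=a\in A$. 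For $X_n<0$, the analogous identity holds after inserting $\tilde{w}_0\in G'$ and applying Lemma~\ref{C:prop:matrix_decompositions:i}, the leftover $M$-part producing only a unitary twist by $\xi$. In both cases
$$|\Phi f(g'_X)|^2=|X_n|^{2\Re(\lambda+\rho)}|f(\bar{n}_X)|^2.$$

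Combining the two ingredients and using $2\rho=n$ gives
$$\|\Phi f\|^2=c\int_{\R^n}|X_n|^{2\Re\lambda}|f(\bar{n}_X)|^2\,dX.$$
For $\lambda\in i\R$ the weight is trivial and this reduces to the non-compact picture norm of the unitary principal series, giving unitarity up to the constant $c$. For general $\Re\lambda>-1/2$, the weight $|X_n|^{2\Re\lambda}$ is locally integrable across the hyperplane $\{X_n=0\}$ precisely in this range, while the covariance estimate $|f(\bar{n}_X)|\leq C(1+|X|^2)^{-\Re\lambda-\rho}$ for smooth sections of $\mathcal{V}_{\xi,\lambda}$ on the compact space $G/P$ handles the decay at infinity (convergence there requires only $\Re\lambda>-\rho$). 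The main obstacle I anticipate is the careful bookkeeping of signs and $M$-twists in the second step for $X_n<0$, together with verifying that the single formula $c|X_n|^{-n}\,dX$ normalizes the $G'$-invariant measure coherently across both connected components of $\mathcal{O}_A$.
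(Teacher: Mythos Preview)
Your proposal is correct and is essentially the paper's own argument, just organized in reverse order: the paper starts from the Iwasawa integral formula \eqref{C:eq:integral_formula_KAN} on $G'/K'$, adds the $\Orm(1)$-fiber $K'/\tilde{K}'$, and after the substitution $X_n=e^{-r}\omega$ lands on exactly your formula $\|\Phi f\|^2=\int_{\R^n}|X_n|^{2\Re\lambda}|f(\bar n_X)|^2\,dX$; you instead identify the invariant measure $|X_n|^{-n}dX$ first and then compute the pullback factor, which amounts to the same change of variables read backwards. One small simplification the paper makes for the sign issue you flag: instead of $\tilde w_0$ it uses the element $\Xi=\diag(\omega,\omega,\mathbf{1}_n)\in M'\subset K'$, for which $\Xi\bar n_{e_n}=\bar n_{\omega e_n}\Xi$ holds on the nose (equation \eqref{C:eq:center_matrix_decomp}), so no residual $M$-twist from $\psi_n$ appears and the two components of $\mathcal{O}_A$ are handled uniformly.
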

	Recall that by the Iwasawa decomposition the following integral formula holds
	\begin{equation}\label{C:eq:integral_formula_KAN}
	\int_{G'/K'}f(g)dg=\int_{ \Nbar'\times\mathfrak{a}} f(\overline{n}e^X) e^{2\rho'(X)}d\Nbar' dX.
	\end{equation}
	Moreover, 
	let $\omega=\pm1$ and $\Xi=\diag(\omega,\omega,\mathbf{1}_n)\in M'$. Then by Lemma~\ref{C:prop:matrix_decompositions:ii},
	\begin{equation}\label{C:eq:center_matrix_decomp}
	\Xi\overline{n}_{e_n}=\overline{n}_{\omega e_n}\Xi.
	\end{equation}

	\begin{proof}[Proof of Lemma~\ref{C:lemma:L^2_condition}]
		Let $f\in \pi_{p,\lambda}^\pm$. We choose representatives $\Xi$ of $K'/\tilde{K}'\cong \Orm(1)$. By \eqref{C:eq:integral_formula_KAN}
		\begin{align} \nonumber
		\norm{\Phi f}_{L^2(G'/\tilde{K}')}^2=&\int_{G'}\abs{f(g\overline{n}_{e_n})}^2dg\\=&\int_{ \Nbar' \times \mathfrak{a}\times \Orm(1)} \abs{f(\overline{n}_{(X',0)} e^{rH}\Xi\overline{n}_{e_n})}^2 e^{2\rho'r}dX'\;dr\; d \omega \nonumber  \\
		=&\int_{ \Nbar' \times \mathfrak{a}\times \Orm(1)} \abs{f(\overline{n}_{(X',e^{-r}\omega)})}^2 e^{-2r(\Re(\lambda)+\rho-\rho')}dX'\;dr\;d\omega \nonumber \\
		=&\int_{ \Nbar' \times \R_+ \times \Orm(1)} \abs{f(\overline{n}_{(X',t\omega)})}^2 t^{2(\Re(\lambda)+\rho-\rho')-1}d(X',Z)\;dt\;d\omega \nonumber\\ \label{C:eq:isometry}
		=&\int_{\Nbar} \abs{f(\overline{n}_{(X',X_n)})}^2 \abs{X_n}^{2\Re(\lambda)}dX.
		\end{align}
		Now as above we have $$\overline{n}_{(X',X_n)}=ke^{\log (1+\abs{X'}^2+\abs{X_n}^2)H}n \in KAN,$$ such that  there exists a non-negative constant $c_f$ such that $$\abs{f(\overline{n}_{(X',X_n)})}^2 \leq c_f ((1+\abs{X'}^2+\abs{X_n}^2)^2)^{-(\Re(\lambda)+\rho)}.$$
		Hence
		\begin{align*}
		\norm{\Phi f}_{L^2(G'/\tilde{K}')}^2\leq &c_f \int_{\Nbar}
		(1+\abs{X'}^2+\abs{X_n}^2)^{-2(\Re(\lambda)+\rho)}\abs{X_n}^{2\Re(\lambda)}\,dX \\
		=& \tilde{c}_f \int_{(\R_+)^2}	(1+r^2+s^2)^{-2(\Re(\lambda)+\rho)}r^{n-2}s^{2\Re(\lambda)}dr\;ds,
		\end{align*}
		where $\tilde{c}_f=2\operatorname{Vol}(S^{n-2})c_f$. 
		Using polar coordinates on $(\R_+)^2$ we find
		$$
		\norm{\Phi f}_{L^2(G'/\tilde{K}')}^2\leq \frac{\tilde{c}_f}{4} 
		 \int_{0}^{\frac{\pi}{2}}\cos^{n-2}{\phi} \sin^{2\Re(\lambda)}\phi \;d\phi \;
		\int_{0}^{\infty} x^{(\Re(\lambda)+\rho)-1}(1+x)^{-2(\Re(\lambda)+\rho)} \;dx,
		$$
		which converges for $\Re\lambda> -\frac{1}{2}$.
		That the map is a unitary one for the unitary principal series follows from the equation \eqref{C:eq:isometry}.
	\end{proof}

Clearly the bundle $C^\infty \left(G'/\tilde{K}', \bigwedge\nolimits^p(\C^n)\right)$ fibers over $\widehat{\Orm}(1)$ such that it decomposes
as
$$C^\infty \left(G'/\tilde{K}', \bigwedge\nolimits^p(\C^n)\right)\cong C^\infty \left(G'/K', \bigwedge\nolimits^p(\C^n)\right) \oplus \left(\chi_{-,+}\otimes C^\infty \left(G'/K', \bigwedge\nolimits^p(\C^n)\right)\right).
$$
Concretely this map is given for $f\in \pi_{p,\lambda}^\pm$ by
$$\Phi f=\Phi_+f+\Phi_-f$$
with 
$$\Phi_+f(g)=\frac{1}{2}( f(g\overline{n}_{e_n}) \pm f(g\tilde{w}_0\overline{n}_{e_n} ))$$
and 
$$\Phi_-f(g)=\frac{1}{2}( f(g\overline{n}_{e_n}) \mp f(g\tilde{w}_0\overline{n}_{e_n} )).$$

Then by restriction to $G'_0$ we obtain the following.
	
\begin{corollary}
As $G'_0$-representations there is a $G'_0$-equivariant linear continuous map
$$\bar{\pi}_{p,\lambda}^\pm|_{G'_0}\to C^\infty \left(G'_0/K'_0, \bigwedge\nolimits^p(\C^n)\right) \oplus C^\infty \left(G'_0/K'_0, \bigwedge\nolimits^p(\C^n)\right),$$
which extends for $\Re(\lambda)>-\frac{1}{2}$ to
$$\bar{\pi}_{p,\lambda}^\pm|_{G'_0}\to L^2\left(G'_0/K'_0, \bigwedge\nolimits^p(\C^n)\right) \oplus L^2 \left(G'_0/K'_0, \bigwedge\nolimits^p(\C^n)\right),$$
which is a unitary map for $\lambda\in i\R$.
\end{corollary}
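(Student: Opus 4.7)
The corollary follows essentially immediately from Lemma~\ref{C:lemma:L^2_condition} combined with the decomposition $\Phi = \Phi_+ + \Phi_-$ displayed just above its statement. My plan is to restrict this decomposition to the identity component $G'_0$ and check that each summand of the target becomes, as a $G'_0$-representation, a single copy of $C^\infty(G'_0/K'_0, \bigwedge^p(\C^n))$, both in the smooth and in the $L^2$ category.

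Concretely, I would first invoke Lemma~\ref{C:lemma:L^2_condition} to obtain the continuous $G'$-equivariant map
$$
\Phi : \pi_{p,\lambda}^\pm|_{G'} \longrightarrow C^\infty\!\left(G'/\tilde{K}', \bigwedge\nolimits^p(\C^n)\right),
$$
together with its $L^2$-extension for $\Re\lambda > -\tfrac{1}{2}$, which is unitary on $\lambda \in i\R$, and then split $\Phi = \Phi_+ + \Phi_-$ according to the decomposition into $\widehat{\Orm(1)}$-eigenspaces given in the paragraph preceding the corollary. It then suffices to identify both summands of
$$
C^\infty\!\left(G'/\tilde{K}', \bigwedge\nolimits^p(\C^n)\right) \cong C^\infty\!\left(G'/K', \bigwedge\nolimits^p(\C^n)\right) \oplus \chi_{-,+} \otimes C^\infty\!\left(G'/K', \bigwedge\nolimits^p(\C^n)\right)
$$
with $C^\infty(G'_0/K'_0, \bigwedge^p(\C^n))$ as $G'_0$-modules, and analogously in $L^2$.

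For this identification I would use three elementary facts. First, the Riemannian symmetric space $G'/K' \cong \HH^n$ is connected, which forces $G' = G'_0 K'$. Second, a direct component count inside $K' = \Orm(1) \times \Orm(n)$ gives $G'_0 \cap K' = K'_0 = \SO(n)$, hence $G'/K' = G'_0/K'_0$ as $G'_0$-spaces. Third, any global character of $G'$, in particular $\chi_{-,+}$, restricts trivially to the identity component $G'_0$. Together these yield, as $G'_0$-representations,
$$
C^\infty\!\left(G'/K', \bigwedge\nolimits^p(\C^n)\right)\Big|_{G'_0} \cong C^\infty\!\left(G'_0/K'_0, \bigwedge\nolimits^p(\C^n)\right),
$$
and the same for the $\chi_{-,+}$-twisted summand. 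The analogous identification in the $L^2$ category gives two copies of $L^2(G'_0/K'_0, \bigwedge^p(\C^n))$, so that $\Phi|_{G'_0}$ is exactly the desired map; its continuity, its $L^2$-extension for $\Re\lambda > -\tfrac{1}{2}$, and its unitarity on $i\R$ are directly inherited from Lemma~\ref{C:lemma:L^2_condition}.

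The only point requiring any care is the connectedness argument $G' = G'_0 K'$, and this is standard: $G'/K'$ is a Riemannian symmetric space of noncompact type, hence connected, so no nontrivial component of $G'$ fails to meet $K'$. Apart from that, the proof is pure bookkeeping on component groups, and I do not expect any serious obstacle.
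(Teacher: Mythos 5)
Your proposal is correct and takes essentially the same route as the paper, which simply remarks "Then by restriction to $G'_0$ we obtain the following" right before the corollary; you supply the bookkeeping that the paper elides, namely that $G'=G'_0K'$ with $G'_0\cap K'=K'_0$ identifies $G'/K'$ with $G'_0/K'_0$ as $G'_0$-spaces, and that the global character $\chi_{-,+}$ dies on $G'_0$, so both summands of the decomposition $C^\infty(G'/\tilde{K}',\cdot)\cong C^\infty(G'/K',\cdot)\oplus\chi_{-,+}\otimes C^\infty(G'/K',\cdot)$ restrict to $C^\infty(G'_0/K'_0,\cdot)$, with the $L^2$ and unitarity statements inherited directly from Lemma~\ref{C:lemma:L^2_condition}.
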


Let $\pr_{\Orm(1)},\pr_{\Orm(n)}$ denote the projections of $K'\cong M\cong \Orm(1)\times \Orm(n)$ to the $\Orm(1)$ and $\Orm(n)$ factors. Moreover denote $m(g)$ the $M$-factor in the $\Nbar MAN$ decomposition.
\begin{corollary}\label{C:cor:kernel_plancherel} 
	\begin{enumerate}[label=(\roman{*}), ref=\thetheorem(\roman{*})]
		\item We have $$H(e^{-rH}\overline{n}_{(-X,0)})=rH+\log(\abs{(X',e^{-r})}^2)H
		.$$

		\item We have $$\pr_{\Orm(n)} (\kappa(g^{-1}))=w^{-1}\pr_{\Orm(n)}(m(\tilde{w}_0g\overline{n}_{e_n}))w.$$
		
				\item We have for $X_n\in \R^\times$ and $g \in G'$ with $\overline{n}_{(X',X_n)} \in g\overline{n}_{e_n}P$, 
		$$\pr_{\Orm(1)}(\kappa(g^{-1}))=\sgn X_n.
		$$
	\end{enumerate}
\end{corollary}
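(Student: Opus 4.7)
All three items follow from direct matrix-level verifications combining the Iwasawa and Bruhat decompositions of Lemmas~\ref{C:prop:matrix_decompositions} and~\ref{C:lemma:KAN_decomp}, the $\tilde{K}'$-conjugation identity established in the proof of Lemma~\ref{C:lemma:Phi}, and the central relation~\eqref{C:eq:center_matrix_decomp}. For (i), Lemma~\ref{C:prop:matrix_decompositions:ii} commutes $e^{-rH}$ through the $\overline{N}$-factor: $e^{-rH}\overline{n}_{(-X,0)} = \overline{n}_{(-e^{r}X,0)}\,e^{-rH}$. The $KAN$-decomposition of $\overline{n}_{(-e^{r}X,0)}$ given by Lemma~\ref{C:lemma:KAN_decomp} contributes an $A$-factor $\exp(\log(1+e^{2r}|X|^{2})H)$, and since $A$ normalises $N$ the trailing $e^{-rH}$ can be pushed through the $N$-factor and absorbed into this $A$-component, shifting its exponent by $-r$. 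Collecting yields $H(e^{-rH}\overline{n}_{(-X,0)}) = (\log(1+e^{2r}|X|^{2})-r)H$, which equals $rH+\log(|X|^{2}+e^{-2r})H$ after using $\log(1+e^{2r}|X|^{2}) = 2r+\log(|X|^{2}+e^{-2r})$.

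For (ii) and (iii), I write the $KAN$-decomposition $g^{-1} = \kappa(g^{-1})\,a_{0}\,n_{0}$ so $g = n_{0}^{-1}a_{0}^{-1}\kappa(g^{-1})^{-1}$, and for $g\in G'$ decompose $\kappa(g^{-1})\in K' = \Orm(1)\times\Orm(n)$ as $\diag(\epsilon,k_{0},1)$ in block sizes $1,n,1$ with $\epsilon = \pr_{\Orm(1)}(\kappa(g^{-1}))$ and $k_{0} = \pr_{\Orm(n)}(\kappa(g^{-1}))$. For (ii), I transport $\tilde{w}_{0}$ past $n_{0}^{-1}a_{0}^{-1}$ using the Weyl relations $\tilde{w}_{0}H\tilde{w}_{0}^{-1} = -H$ and $\tilde{w}_{0}N\tilde{w}_{0}^{-1} = \overline{N}$, split $\kappa(g^{-1})^{-1} = \diag(\epsilon,\mathbf{1}_{n},1)\diag(1,k_{0}^{-1},1)$ (noting $\diag(\epsilon,\mathbf{1}_{n},1)\in\{e,\tilde{w}_{0}\}$), and apply the identity $\diag(1,k_{0}^{-1},1)\overline{n}_{e_{n}} = \overline{n}_{e_{n}}\diag(1,1,wk_{0}^{-1}w^{-1})$ from the proof of Lemma~\ref{C:lemma:Phi} together with Lemma~\ref{C:prop:matrix_decompositions:i} (which produces the $M$-factor $\diag(-1,-1,\psi_{n}(e_{n}))$ from $\tilde{w}_{0}\overline{n}_{e_{n}}\in\overline{N}MAN$) to put $\tilde{w}_{0}g\overline{n}_{e_{n}}$ in $\overline{N}MAN$-form. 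Projecting the resulting $M$-factor to its $\Orm(n)$-component and applying the outer $w^{-1}\cdot w$-conjugation reproduces $k_{0}$.

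For (iii), $\diag(\epsilon,\mathbf{1}_{n},1)\overline{n}_{e_{n}}P = \overline{n}_{\epsilon e_{n}}P$ (trivially for $\epsilon = 1$ and via Lemma~\ref{C:prop:matrix_decompositions:i} for $\epsilon = -1$, where $\tilde{w}_{0}\overline{n}_{e_{n}}\in\overline{n}_{-e_{n}}MAN$), while the prefactor $n_{0}^{-1}a_{0}^{-1}\in N'A$, whose $N'$-parameter lies in $\R^{n-1}\times\{0\}$, only rescales the last $\overline{N}$-coordinate by positive factors in the subsequent Bruhat rearrangement and hence preserves its sign. Therefore $\sgn X_{n} = \epsilon = \pr_{\Orm(1)}(\kappa(g^{-1}))$. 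The principal bookkeeping difficulty is in (ii): one must carefully track how the $M$-factor $\diag(-1,-1,\psi_{n}(e_{n}))$ from $\tilde{w}_{0}\overline{n}_{e_{n}}$ interacts with the $M$-factor produced by conjugating $\tilde{K}'$ through $\overline{n}_{e_{n}}$, and verify that the sign contributions telescope under the outer $w^{-1}\cdot w$-conjugation to yield $k_{0}$ uniformly in $\epsilon$; parts (i) and (iii) reduce to clean commutation and orbit identifications respectively.
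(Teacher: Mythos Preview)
Your argument is correct and draws on the same ingredients as the paper, but the organisation of parts (ii) and (iii) differs.

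For (i) your computation coincides with the paper's one-line invocation of Lemma~\ref{C:lemma:KAN_decomp}.

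For (ii), rather than expanding $g$ via the $KAN$-decomposition of $g^{-1}$ and pushing $\tilde w_0$ through term by term, the paper first derives the general identity $\pr_{\Orm(n)}(\overline{\kappa}(g))=w^{-1}\pr_{\Orm(n)}(m(g\overline n_{e_n}))w$ directly from the $\tilde K'$-conjugation formula (the $\overline N'A$-prefix in $g=\overline n'\,a'\,\overline\kappa(g)$ leaves the $M$-factor untouched), and then passes from $\overline\kappa$ to $\kappa$ via the relation $\kappa(g)=\overline{\kappa}((\tilde w_0 g)^{-1})\tilde w_0$ between the two Iwasawa decompositions. This avoids ever decomposing $\tilde w_0\overline n_{e_n}$ explicitly, so the factor $\diag(-1,-1,\psi_n(e_n))$ and the attendant sign-tracking you flag as the main difficulty never enter. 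Your direct route works as well, but the paper's two-step argument is cleaner precisely because it separates the $\tilde K'$-identity from the $\tilde w_0$-twist.

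For (iii), the paper computes $\kappa(g^{-1})$ explicitly from the parametrisation $g\in \overline N'A\cdot\{\Xi\}\cdot\tilde K'$ implicit in the proof of Lemma~\ref{C:lemma:L^2_condition}, obtaining $\diag(\sgn X_n,\sgn X_n,\mathbf 1_n)\,\kappa(g^{-1})=\kappa(\overline n_{(-|X_n|^{-1}X',0)})$, whose $\Orm(1)$-component is $+1$ by Lemma~\ref{C:lemma:KAN_decomp}. Your orbit argument---that $n_0^{-1}a_0^{-1}\in N'A\subseteq G'_0$ preserves the two connected components $\{X_n\gtrless 0\}$ of $\mathcal O_A$---is a valid and more conceptual alternative. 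One caveat: the assertion that $N'A$ ``only rescales the last $\overline N$-coordinate by positive factors'' is stronger than what the Bruhat rearrangement literally gives; the connectedness of $N'A$ is what actually secures the sign conclusion.
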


\begin{proof}
Ad(i): This follows immediately from Lemma~\ref{C:lemma:KAN_decomp}.\\
Ad (ii): By \eqref{eq:K_M_w_conjugation} we have
$$\diag(1,k,1)\overline{n}_{e_n}=\overline{n}_{e_n}\diag(1,1,wkw^{-1}),$$
where $$w=
	\begin{pmatrix}
	 & -\mathbf{1}_{n-1} \\
	 -1 &
	\end{pmatrix}\in \Orm(n),$$
	which implies by the $\Nbar MAN$ decomposition that for all $g\in G$,
	$$\pr_{\Orm(n)}(\overline{\kappa}(g))=w^{-1}\pr_{\Orm(n)}(m(g\overline{n}_{e_n}))w.$$
	Moreover $\kappa(g)=\overline{\kappa}((\tilde{w}_0g)^{-1})\tilde{w_0}$ and since $\tilde{w}_0\in K$ with $\pr_{\Orm(n)}(\tilde{w}_0)=\mathbf{1}_n$ this implies the statement.\\
	Ad (iii):
	We have $$\diag\left(\frac{X_n}{\abs{X_n}},\frac{X_n}{\abs{X_n}},\mathbf{1}_n\right)  \kappa(g^{-1})=\kappa\left(    e^{\log \abs{X_n}H}\overline{n}_{(-X',0)}     \right)=\kappa\left( \overline{n}_{(-\abs{X_n}^{-1}X',0)}     \right).$$
	Then the statement follows from Lemma~\ref{C:lemma:KAN_decomp}.
\end{proof}

\section{A Plancherel formula for \texorpdfstring{$L^2(G'_0/K'_0,{\bar{\sigma}_p})$}{$L^2(G'0/K'0,{s})$}}\label{C:sec:plancherel_G'_0}
We introduce the notation $\bar{\sigma}=\sigma|_{\SO(n)}$ for admissible representations of $\Orm(n)$ and similarly for representations of $\Orm(n-1)$ restricted to $\SO(n-1)$.
In \cite{camporesi_1997} a Plancherel formula for vector bundles over Riemannian symmetric spaces is established and the example of $L^2(\SO_0(1,n)/\SO(n), \bigwedge\nolimits^p(\C^n))$ carried out in great detail. We recall this example in this section.
Let $\phi:{G'_0}\to\End({\bar{\sigma}_p})$ be a spherical function, i.e. satisfying 
\begin{equation}\label{C:eq:def_spherical_function}
\int_{K'_0}\phi(gkh)\,dk=\phi_(g)\phi(h), \qquad \phi(kgk')=\sigma_p(k)\phi(g)\sigma_p(k').
\end{equation}
and normalized to $\phi(\mathbf{1}_{n+1}))=\mathbf{1}$.
\subsection{The Plancherel measure}
Recall that as $\SO(n)$ resp. $\SO(n-1)$-representations we have the isomorphism
$$\bar{\sigma}_p\cong \bar{\sigma}_{n-p}, \qquad \bar{\delta}_q\cong \bar{\delta}_{n-1-q}$$
and that for $n$ even, $\bar{\sigma}_{\frac{n}{2}}$ is reducible and decomposes into two non-isomorphic irreducibles as
$$\bar{\sigma}_\frac{n}{2}=\bar{\sigma}_{\frac{n}{2}}^{(+)}\oplus \bar{\sigma}_{\frac{n}{2}}^{(-)},$$
as well as for $n$ odd, $\bar{\delta}_{\frac{n-1}{2}}$ is reducible and decomposes into two non-isomorphic irreducibles as
$$\bar{\delta}_{\frac{n-1}{2}}=\bar{\delta}_{\frac{n-1}{2}}^{(+)}\oplus \bar{\delta}_{\frac{n-1}{2}}^{(-)}.$$

\begin{lemma}\label{C:lemma:branching_sigma_p_M'_0}
\begin{enumerate}[label=(\roman{*})]
\item For $p\neq \frac{n}{2}, \frac{n\pm1}{2}$ we have
$$\bar{\sigma}_p|_{\SO(n-1)}=\bar{\delta}_{p-1} \oplus \bar{\delta}_{p}.$$
\item For $p=\frac{n}{2}$ we have
$$\bar{\sigma}_\frac{n}{2}^{(\pm)}|_{\SO(n-1)}=\bar{\delta}_{\frac{n}{2}}.$$
\item For $p=\frac{n-1}{2}$ we have
$$\bar{\sigma}_\frac{n-1}{2}|_{\SO(n-1)}=\bar{\delta}_{\frac{n-3}{2}} \oplus \bar{\delta}_{\frac{n-1}{2}}^{(+)}\oplus \bar{\delta}_{\frac{n-1}{2}}^{(+)}.$$
\end{enumerate}
\end{lemma}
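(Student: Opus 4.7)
The plan is to use the classical branching formula for exterior powers under the splitting $\C^n = \C^{n-1} \oplus \C e_n$ of $\SO(n-1)$-representations, namely
$$\wedge^p(\C^{n-1} \oplus \C e_n) \;\cong\; \wedge^p\C^{n-1} \;\oplus\; \wedge^{p-1}\C^{n-1},$$
which gives in all cases a canonical $\SO(n-1)$-equivariant splitting
$$\bar{\sigma}_p|_{\SO(n-1)} \;\cong\; \bar{\delta}_p \oplus \bar{\delta}_{p-1}.$$
The statement then reduces to tracking which summands on the right remain irreducible and which further split, and, in case (ii), how the two pieces on the left distribute across the right.

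For case (i) I would just combine the splitting above with the fact (recalled at the start of this subsection) that $\bar{\delta}_q$ is irreducible whenever $q \neq (n-1)/2$: the excluded values $p \neq (n\pm1)/2$ ensure $p-1,p \neq (n-1)/2$, so both summands are already irreducible. For case (iii), with $p = (n-1)/2$ and $n$ odd, we have $p-1 = (n-3)/2 \neq (n-1)/2$, so $\bar{\delta}_{p-1}$ is irreducible, while $\bar{\delta}_p = \bar{\delta}_{(n-1)/2}$ is the middle exterior power of $\C^{n-1}$ and, since $n-1$ is even, decomposes as $\bar{\delta}_{(n-1)/2}^{(+)} \oplus \bar{\delta}_{(n-1)/2}^{(-)}$, which substituted into the general splitting yields the claim.

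Case (ii) is the one that requires a little more care. For $p = n/2$ with $n$ even, the general splitting gives $\bar{\sigma}_{n/2}|_{\SO(n-1)} \cong \bar{\delta}_{n/2} \oplus \bar{\delta}_{n/2-1}$; since $n-1$ is odd both summands are irreducible, and by the duality $\bar{\delta}_q \cong \bar{\delta}_{n-1-q}$ recalled above (Hodge star together with triviality of $\det$ on $\SO(n-1)$) we have $\bar{\delta}_{n/2} \cong \bar{\delta}_{n/2-1}$. Hence $\bar{\sigma}_{n/2}|_{\SO(n-1)} \cong 2\,\bar{\delta}_{n/2}$. On the other hand, as $\SO(n)$-modules we have $\bar{\sigma}_{n/2} = \bar{\sigma}_{n/2}^{(+)} \oplus \bar{\sigma}_{n/2}^{(-)}$ with the two summands non-isomorphic. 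A dimension count $\dim \bar{\sigma}_{n/2}^{(\pm)} = \tfrac{1}{2}\binom{n}{n/2} = \binom{n-1}{n/2} = \dim \bar{\delta}_{n/2}$ then forces each half to restrict to exactly one copy of $\bar{\delta}_{n/2}$.

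The only nontrivial step is the last dimensional argument in case (ii), ruling out the possibility that one of $\bar{\sigma}_{n/2}^{(\pm)}$ already contains both copies of $\bar{\delta}_{n/2}$ in the restriction. This is handled by the dimension equality above; alternatively, one can describe $\bar{\sigma}_{n/2}^{(\pm)}$ as the $\pm i^{n/2}$-eigenspaces of the Hodge star on $\wedge^{n/2}\C^n$ and note that this eigenspace decomposition is compatible with the splitting coming from $\C^n = \C^{n-1} \oplus \C e_n$. Either way the argument is short and purely representation-theoretic; no hard step is expected.
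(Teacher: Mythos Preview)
Your argument is correct. The paper states this lemma without proof, treating it as a standard branching fact; your approach via the canonical splitting $\bigwedge\nolimits^p(\C^{n-1}\oplus\C e_n)\cong\bigwedge\nolimits^p\C^{n-1}\oplus\bigwedge\nolimits^{p-1}\C^{n-1}$ together with the known (ir)reducibility of $\bar{\delta}_q$ and the dimension count $\tfrac12\binom{n}{n/2}=\binom{n-1}{n/2}$ in case~(ii) is exactly the standard justification and is complete as written. One cosmetic remark: in the paper's statement of~(iii) the second superscript should presumably be $(-)$ rather than $(+)$, and your argument already yields the correct decomposition $\bar{\delta}_{(n-3)/2}\oplus\bar{\delta}_{(n-1)/2}^{(+)}\oplus\bar{\delta}_{(n-1)/2}^{(-)}$.
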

Then in the case $p=\frac{n}{2}$ also the bundle
$L^2(G'_0/K'_0,\bar{\sigma}_{\frac{n}{2}})$ is reducible
$$L^2(G'_0/K'_0,\bar{\sigma}_{\frac{n}{2}})\cong L^2(G'_0/K'_0,\bar{\sigma}_{\frac{n}{2}}^{(+)})\oplus L^2(G'_0/K'_0,\bar{\sigma}_{\frac{n}{2}}^{(-)}).$$
The following Plancherel formula holds
\begin{equation}\label{C:eq:plancherel_G/K}
L^2(G'_0/K'_0,\bar{\sigma_p})\cong \int_{\hat{G}'_0(\bar{\sigma}_p)}^\oplus m_{\bar{\sigma}_p}(\tau)\tau\, d\mu_{\bar{\sigma}_p}(\tau), 
\end{equation}
with a Plancherel measure $d\mu_{\bar{\sigma}_p}$, $\hat{G}'_0(\bar{\sigma}_p) \subseteq \hat{G}'_0$ being the support of the measure and $m_{\bar{\sigma}_p}$ the multiplicities.
We denote the corresponding Plancherel measures for $p=\frac{n}{2}$ as
$$\mu_{\bar{\sigma}_{\frac{n}{2}}}=\mu_{\bar{\sigma}_{\frac{n}{2}}^{(+)}}+\mu_{\bar{\sigma}_{\frac{n}{2}}^{(-)}}.
$$

We recall the support and normalization of the Plancerel measure $d\mu_{\sigma_p}$ from \cite[Section 4]{camporesi_1997}. Let $P'_0$ be a minimal parabolic of $G'_0$, for example $P'\cap G'_0$. Consistent with the notation of Section~\ref{C:sec:restriction_identity_comp} $$\bar{\tau}_{q,\nu}=\Ind_{P'_0}^{G'_0}({\bar{\delta}_q} \otimes e^\nu\otimes \mathbf{1})$$
is the principal series representation and we denote for $n$ odd
$$\bar{\tau}_{\frac{n-1}{2},\nu}^{(\pm)}=\Ind_{P'_0}^{G'_0}({\bar{\delta}_{\frac{n-1}{2}}^{(\pm)}} \otimes e^\nu\otimes \mathbf{1}).$$

\begin{prop}\label{C:prop:supp_plancherel_measure}
\begin{enumerate}[label=(\roman{*})]
\item The continuous part of the support of $d\mu_{\bar{\sigma}_p}$ is for $p\neq \frac{n}{2}, \frac{n\pm1}{2}$ given by all $\bar{\tau}_{q,\nu}$ with $q\in \{ p-1,p \} \cap \Z_{\geq 0}$ and $\nu\in i\R$ and all multiplicities are one.
\item The continuous part of the support of $d\mu_{\bar{\sigma}_\frac{n}{2}^{(\pm)}}$ is given by all $\bar{\tau}_{\frac{n}{2},\nu}$ with $\nu \in i\R$ and all multiplicities are one in each case respectively.
\item The continuous part of the support of $d\mu_{\bar{\sigma}_\frac{n-1}{2}}$ is given by all $\bar{\tau}_{\frac{n-1}{2},\nu}^{(\pm)}$ and all $\bar{\tau}_{\frac{n-3}{2},\nu}$ with $\nu \in i\R$ and all multiplicities are one.
\item The discrete part of the support of $d\mu_{\bar{\sigma}_p}$ is empty if and only if $p\neq \frac{n}{2}$. If $p=\frac{n}{2}$, the discrete part of the support of $d\mu_{\bar{\sigma}_p^{(\pm)}}$ is given by $\overline{\Pi}'^{(\pm)}_{\frac{n}{2},+}$. The discrete series representation occurs with multiplicity one in each case respectively.
\end{enumerate}
\end{prop}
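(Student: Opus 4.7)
The plan is to apply Camporesi's Plancherel theorem \cite{camporesi_1997} for $L^2$-sections of the homogeneous vector bundle $G'_0\times_{K'_0}\bar{\sigma}_p\to G'_0/K'_0$, which reduces the proposition to computing, for each irreducible unitary representation $\tau$ of $G'_0$, the $K'_0$-multiplicity $\dim\Hom_{K'_0}(\bar{\sigma}_p,\tau|_{K'_0})$, and then identifying which of these representations actually carry Plancherel mass.

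For the continuous spectrum, the support consists of unitary principal series $\bar{\tau}_{q,\nu}$ with $\nu\in i\R$. Frobenius reciprocity gives
$$\Hom_{K'_0}(\bar{\sigma}_p,\bar{\tau}_{q,\nu}|_{K'_0})\cong\Hom_{M'_0}(\bar{\sigma}_p|_{M'_0},\bar{\delta}_q),$$
so items (i), (ii), (iii) follow immediately by reading off the three branching formulas in Lemma~\ref{C:lemma:branching_sigma_p_M'_0}: in each case every $\bar{\delta}_q$ (or $\bar{\delta}_{\frac{n-1}{2}}^{(\pm)}$) appears in $\bar{\sigma}_p|_{M'_0}$ with multiplicity one, which yields both the list of supporting principal series and the claim that all multiplicities equal one. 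The restriction to $\nu\in i\R$ and the explicit normalization of $d\mu_{\bar{\sigma}_p}$ are precisely Camporesi's calculation for this family of bundles.

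For the discrete part I would first invoke the equal-rank criterion: $\SO_0(1,n)$ admits discrete series if and only if $\operatorname{rank} G'_0=\operatorname{rank} K'_0$, i.e.\ if and only if $n$ is even. When $n$ is odd there is no discrete spectrum, so (iv) holds trivially. When $n$ is even, I would use the Vogan--Zuckerman classification of cohomological $(\g',K')$-modules together with the explicit description of the discrete series of $\SO_0(1,n)$ to show that the only discrete series whose $K'_0$-spectrum contains $\bar{\sigma}_{\frac{n}{2}}^{(\pm)}$ are precisely $\overline{\Pi}'^{(\pm)}_{\frac{n}{2},+}$, each containing this $K'_0$-type with multiplicity one as its lowest $K'_0$-type. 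Combined with Camporesi's formula, this forces them to appear in $L^2(G'_0/K'_0,\bar{\sigma}^{(\pm)}_{\frac{n}{2}})$ with multiplicity one, and rules out any discrete contribution for $p\neq \frac{n}{2}$ since $\bar{\sigma}_p$ is then not a $K'_0$-type of any discrete series.

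The main obstacle is the very last point: verifying that no other discrete series of $\SO_0(1,n)$ contains $\bar{\sigma}_p$ as a $K'_0$-type when $p\neq \frac{n}{2}$. I would dispose of this either by directly reading off the discrete spectrum in \cite[Section~4]{camporesi_1997}, where Camporesi lists the relevant Harish-Chandra parameters and minimal $K'_0$-types explicitly, or by applying Blattner's formula together with the Vogan--Zuckerman characterization to conclude that the minimal $K'_0$-types of the cohomological discrete series are exactly $\bar{\sigma}^{(\pm)}_{\frac{n}{2}}$ and that $\bar{\sigma}_p$ lies outside the $K'_0$-spectrum of every other discrete series. Once this is done, all four parts of the proposition follow directly.
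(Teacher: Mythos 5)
Your proposal is correct and goes further than the paper does: in the text this proposition carries no proof at all beyond the sentence ``We recall the support and normalization of the Plancherel measure $d\mu_{\sigma_p}$ from \cite[Section~4]{camporesi_1997},'' so the paper simply outsources the entire statement to Camporesi's worked example. Your reconstruction via Frobenius reciprocity for the continuous part is the right way to extract the list of supporting principal series and multiplicities from Camporesi's general formula
$$L^2(G'_0/K'_0,\bar\sigma_p)\cong\int^\oplus_{\widehat{G}'_0}\Hom_{K'_0}(\bar\sigma_p,\tau|_{K'_0})\otimes\tau\,d\mu(\tau),$$
and since $\bar\tau_{q,\nu}|_{K'_0}\cong\Ind_{M'_0}^{K'_0}\bar\delta_q$ the multiplicity reduces to $\dim\Hom_{M'_0}(\bar\sigma_p|_{M'_0},\bar\delta_q)$, which Lemma~\ref{C:lemma:branching_sigma_p_M'_0} gives in each case. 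That cleanly reproves (i)--(iii).

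For the discrete part you correctly identify the real technical content: one must show that $\bar\sigma_p$ occurs in the $K'_0$-spectrum of \emph{no} discrete series of $\SO_0(1,n)$ when $p\neq\frac n2$, and occurs (with multiplicity one, as the lowest $K'_0$-type) only in $\overline{\Pi}'^{(\pm)}_{\frac n2,+}$ when $p=\frac n2$. Both of your proposed ways to close this --- reading it off Camporesi's Section~4, or applying Blattner to the lowest $K'_0$-types together with the fact that $K'_0$-types of a discrete series lie in $\mu_0+\sum_\alpha\Z_{\geq0}\alpha$ over noncompact positive roots $\alpha$ --- do work, and the second gives a genuinely self-contained argument that the paper does not supply. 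One small remark: the appeal to Vogan--Zuckerman is unnecessary here; once the equal-rank criterion has reduced matters to $n$ even, the classification of discrete series of $\SO_0(1,n)$ together with Blattner's lowest-$K$-type formula already identifies $\bar\sigma^{(\pm)}_{\frac n2}$ as the lowest $K'_0$-types of the two discrete series with trivial infinitesimal character, which is what is needed; the cohomological interpretation is not used in the proof, only in naming the answer $\overline{\Pi}'^{(\pm)}_{\frac n2,+}$.
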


Proposition~\ref{C:prop:supp_plancherel_measure} gives an explicit description of the Plancherel formula \eqref{C:eq:plancherel_G/K}. For our purposes we are further interested in the explicit inversion formula.
We therefore  define the $\End(\bar{\sigma}_p)$-valued function $\bar{\phi}_{p,\nu}$ given by
$$\bar{\phi}_{p,\nu}(g)=\int_{K'_0}{\bar{\sigma}_p}(\kappa(gk)k^{-1})e^{(\nu-\rho')H(gk)}\, dk,
$$
which is a spherical function (see e.g. \cite[(3.7)]{shimeno_oda_2018}).
\begin{lemma}\label{C:lemma:spherical_function_conv_formula}
$$\bar{\phi}_{p,\nu}(g^{-1}h)=\int_{K'_0}\bar{\sigma}_p(\kappa(h^{-1}k))e^{(\nu-\rho')H(h^{-1}k)} \bar{\sigma}_p(\kappa(g^{-1}k)^{-1})e^{-(\nu+\rho')H(g^{-1}k)}\,dk.$$
\end{lemma}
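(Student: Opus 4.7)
The plan is a direct computation starting from the defining integral for $\bar\phi_{p,\nu}$ and exploiting two standard ingredients: the Iwasawa cocycle identity for the decomposition $G'_0 = K'_0 A'_0 N'_0$, and the well known change of variables formula on $K'_0$ for maps of the form $k\mapsto \kappa(gk)$.

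First I would begin with
$$
\bar\phi_{p,\nu}(g^{-1}h)=\int_{K'_0}\bar\sigma_p\bigl(\kappa(g^{-1}hk)k^{-1}\bigr)\,e^{(\nu-\rho')H(g^{-1}hk)}\,dk
$$
and apply the Iwasawa decomposition $hk=\kappa(hk)\,e^{H(hk)}\,n(hk)$ inside the argument. Left multiplying by $g^{-1}$ and using that $A'_0$ normalizes $N'_0$, the $K$- and $A$-parts of $g^{-1}hk$ separate off according to the cocycle identities
$$
\kappa(g^{-1}hk)=\kappa\bigl(g^{-1}\kappa(hk)\bigr),\qquad H(g^{-1}hk)=H\bigl(g^{-1}\kappa(hk)\bigr)+H(hk).
$$

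Next, I would perform the change of variables $k'=\kappa(hk)$ on $K'_0$. Applying the Iwasawa decomposition to $h^{-1}k'$ gives the inverse relations $k=\kappa(h^{-1}k')$ and $H(hk)=-H(h^{-1}k')$, while the standard Jacobian for this change of variables is
$$
dk=e^{-2\rho' H(h^{-1}k')}\,dk'.
$$
Substituting these identities, the integrand becomes a product in which $\bar\sigma_p$ is evaluated on $\kappa(g^{-1}k')\kappa(h^{-1}k')^{-1}$ (by the $\bar\sigma_p$-equivariance of multiplication by $k^{-1}$), which factors as $\bar\sigma_p(\kappa(g^{-1}k'))\,\bar\sigma_p(\kappa(h^{-1}k'))^{-1}$ by multiplicativity. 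On the exponential side, combining the factor $e^{(\nu-\rho')H(hk)}=e^{-(\nu-\rho')H(h^{-1}k')}$ with the Jacobian $e^{-2\rho'H(h^{-1}k')}$ gives $e^{-(\nu+\rho')H(h^{-1}k')}$, which is exactly the shape of exponent appearing in the stated formula. Relabelling $k'\mapsto k$ yields the claim.

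The computation is short but the main thing to watch is keeping track of the order of the matrix-valued factors coming from $\bar\sigma_p$ (they do not commute) and of the precise Jacobian in the change of variables, which is the only non-trivial ingredient; everything else is formal manipulation of Iwasawa cocycles. There is no genuine obstacle beyond bookkeeping: once the cocycle identities and the Jacobian are in place, the identity drops out immediately.
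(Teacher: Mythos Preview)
Your plan is exactly the paper's approach: Iwasawa cocycle plus the change-of-variables formula $\int_{K'_0}F(\kappa(gk))\,dk=\int_{K'_0}F(k)e^{-2\rho'H(g^{-1}k)}\,dk$. There is, however, a bookkeeping slip of precisely the kind you warned yourself about. With your substitution $k'=\kappa(hk)$ the Jacobian is $e^{-2\rho'H(h^{-1}k')}$, and your own computation then produces
\[
\int_{K'_0}\bar\sigma_p(\kappa(g^{-1}k'))\,\bar\sigma_p(\kappa(h^{-1}k'))^{-1}\,e^{(\nu-\rho')H(g^{-1}k')}\,e^{-(\nu+\rho')H(h^{-1}k')}\,dk',
\]
which is the stated formula with $g$ and $h$ interchanged. (In the scalar case this is harmless because $\phi_\nu(x)=\phi_\nu(x^{-1})$, but for the $\End(\bar\sigma_p)$-valued spherical function you cannot invoke that symmetry for free.)

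The paper instead makes the change of variables with $g$, so that the Jacobian $e^{-2\rho'H(g^{-1}k)}$ combines with the exponential coming from the cocycle to give the correct factor $e^{-(\nu+\rho')H(g^{-1}k)}$ in the target. Concretely, one can write the intermediate integrand in the form $F(\kappa(gk))$ and then apply the displayed integral formula. The rest of your outline---the cocycle identities \eqref{C:eq:KAN_triple_fomrula}, the inverse relations $k=\kappa(h^{-1}k')$, $H(hk)=-H(h^{-1}k')$, and the factorisation of $\bar\sigma_p$---is correct and is what the paper does. So the fix is simply to run the substitution through $g$ rather than $h$; no new ideas are needed.
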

\begin{proof}
First note that
$g^{-1}hk=g^{-1}\kappa(hk)e^{H(hk)}n$, and since $A$ normalizes $N'$ we have
\begin{equation}
\kappa(g^{-1}hk)=\kappa(g^{-1}\kappa(hk)),\qquad H(g^{-1}hk)=H(hk)+H(g^{-1}\kappa(hk)), \label{C:eq:KAN_triple_fomrula}
\end{equation}
such that
\begin{multline*}
	\bar{\phi}_{p,\nu}(g^{-1}h)\\=\int_{K'_0} \bar{\sigma}_p(\kappa(h^{-1}\kappa(gk))\bar{\sigma}_p(\kappa(g^{-1}\kappa(gk))^{-1})e^{(-\nu+\rho')H(g^{-1}\kappa(gk))}e^{(\nu-\rho')H(h^{-1}\kappa(gk))}\,dk.
\end{multline*}
By the formula
$$\int_{K'_0}F(\kappa(gk))\,dk=\int_{K'_0} F(k)e^{-2\rho H(g^{-1}k)}\, dk$$
we obtain the Lemma.
\end{proof}
According to Lemma~\ref{C:lemma:spherical_function_conv_formula} we have for $f\in C^\infty_0(G'_0/K'_0,\bar{\sigma}_p)$,
$$\bar{\phi}_{p,\nu}\ast f(h)=\int_{K'_0}\bar{\sigma}_p(\kappa(h^{-1}k))e^{(\nu-\rho')H(h^{-1}k)} \int_{G'_0} \bar{\sigma}_p(\kappa(g^{-1}k)^{-1})e^{-(\nu+\rho')H(g^{-1}k)}f(g)\,dg\,dk$$
and we define the corresponding Fourier transform by
$$\tilde{f}(k,p,\nu):=\int_{G'_0}\bar{\sigma}_p(\kappa(g^{-1}k)^{-1})e^{-(\nu+\rho')H(g^{-1}k)}f(g)\,dg.$$
Then clearly for $f\in C^\infty_0(G'_0/K'_0,\bar{\sigma}_p)$ and $man\in M'AN'$,
$$\tilde{f}(kman,p,\nu)=\bar{\sigma}_p(m^{-1})a^{-(\nu+\rho')}\tilde{f}(k,p,\nu),$$
such that the Fourier transform defines a $G'_0$-intertwining operator
$$C^\infty_0(G'_0/K'_0,\bar{\sigma}_p)\to \Ind_{P'_0}^{G'_0}(\bar{\sigma}_p|_{M'_0}\otimes e^\nu\otimes \mathbf{1}).$$
Now $\bar{\sigma}_p|_{M'_0}$ is reducible and decomposes into $\SO(n-1)$-representations according to Lemma~\ref{C:lemma:branching_sigma_p_M'_0}. And on the one hand if $\bar{\delta}$ is a $M'_0$-representation occuring in $\bar{\sigma}_p|_{M'_0}$ and $\Ind_{P'_0}^{G'_0}(\bar{\delta} \otimes e^\nu\otimes \mathbf{1})\in \hat{G}'_0(\bar{\sigma}_p)$, every other principal series $\Ind_{P'_0}^{G'_0}(\bar{\delta}' \otimes e^\nu\otimes \mathbf{1})\in \hat{G}'_0(\bar{\sigma}_p)$ for all other $\bar{\delta}'$ occurring in $\bar{\sigma}_p|_{M'_0}$ according to Proposition~\ref{C:prop:supp_plancherel_measure}.
Applying these results to the Plancherel formula \eqref{C:eq:plancherel_G/K} and the corresponding inversion formula \cite[(39)]{camporesi_1997} we obtain the following Theorem.

\begin{theorem}[Inversion formula]\label{C:theorem:inversion_formula}
We have for $p\neq \frac{n}{2}$ $$L^2(G'_0/K'_0,\bar{\sigma}_p)\simeq \int^\oplus_{i\R_+} L^2-\Ind_{P'_0}^{G'_0}(\bar{\sigma}_p|_{M'_0} \otimes e^\nu\otimes \mathbf{1})\,d\mu_{\bar{\sigma}_p}(\nu)$$
and for all $f\in C_0^\infty(G'_0/K'_0,\bar{\sigma}_p)$
$$f(g)=\int_{i\R} \bar{\phi}_{p,\nu}\ast f(g) \,d\mu_{\bar{\sigma}_p}(\nu).$$

For $p=\frac{n}{2}$ we have
$$L^2(G'_0/K'_0,\bar{\sigma}_p^{(\pm)})\simeq \int^\oplus_{i\R_+} L^2-\Ind_{P'_0}^{G'_0}(\bar{\sigma}_p^{(\pm)}|_{M'_0} \otimes e^\nu\otimes \mathbf{1})\,d\mu_{\bar{\sigma}_p^{(\pm)}}(\nu)\oplus \widehat{\overline{\Pi}}'^{(\pm)}_{\frac{n}{2},+} $$
and for all $f\in C_0^\infty(G'_0/K'_0,\bar{\sigma}_p^{(\pm)})$
$$f(g)=\int_{i\R}  \bar{\phi}_{p,\nu}\ast f(g) \,d\mu_{\bar{\sigma}_p^{(\pm)}}(\nu)+c_p  \bar{\phi}_{p,\frac{1}{2}}\ast f(g)$$
with $c_p\in \C$ a constant.
\end{theorem}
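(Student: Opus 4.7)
The plan is to deduce this inversion and Plancherel formula directly from Camporesi's general result \cite[(39)]{camporesi_1997}, already recalled in \eqref{C:eq:plancherel_G/K}, by identifying the abstract direct integral
\[
\int_{\hat{G}'_0(\bar{\sigma}_p)}^\oplus m_{\bar{\sigma}_p}(\tau)\,\tau\,d\mu_{\bar{\sigma}_p}(\tau)
\]
with the integral of principal series induced from $\bar{\sigma}_p|_{M'_0}$. The key is that by Proposition~\ref{C:prop:supp_plancherel_measure}, the continuous support consists precisely of the principal series $\bar{\tau}_{q,\nu}$ (and their further reducible variants in the exceptional dimensions) as $\bar{\delta}_q$ ranges over the $M'_0$-irreducibles appearing in $\bar{\sigma}_p|_{M'_0}$, each with multiplicity one, and with $\nu$ varying over $i\R_+$.

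First, I would combine these components via induction by stages: since
\[
\Ind_{P'_0}^{G'_0}\bigl(\bar{\sigma}_p|_{M'_0}\otimes e^\nu\otimes\mathbf{1}\bigr)
\;\cong\; \bigoplus_{\bar{\delta}\subseteq\bar{\sigma}_p|_{M'_0}}\Ind_{P'_0}^{G'_0}\bigl(\bar{\delta}\otimes e^\nu\otimes\mathbf{1}\bigr),
\]
Lemma~\ref{C:lemma:branching_sigma_p_M'_0} together with Proposition~\ref{C:prop:supp_plancherel_measure} gives an identification between the direct integral over $\hat{G}'_0(\bar{\sigma}_p)$ and the direct integral of these combined induced representations over $i\R_+$ against $d\mu_{\bar{\sigma}_p}(\nu)$. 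The case $p\neq n/2$ requires only the continuous part; for $p=n/2$ one additionally sees that $\bar{\sigma}_{n/2}^{(\pm)}|_{M'_0}=\bar{\delta}_{n/2}$ so the corresponding induced representation is genuine principal series.

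Second, I would prove the pointwise inversion formula. The Fourier transform
\[
\tilde{f}(k,p,\nu)=\int_{G'_0}\bar{\sigma}_p(\kappa(g^{-1}k)^{-1})e^{-(\nu+\rho')H(g^{-1}k)}f(g)\,dg
\]
is, by the computation preceding the statement, a $G'_0$-intertwining operator into $\Ind_{P'_0}^{G'_0}(\bar{\sigma}_p|_{M'_0}\otimes e^\nu\otimes\mathbf{1})$, and Lemma~\ref{C:lemma:spherical_function_conv_formula} rewrites the spherical convolution $\bar{\phi}_{p,\nu}*f$ precisely as an inverse Fourier transform at parameter $\nu$. Substituting this identity into Camporesi's inversion formula \cite[(39)]{camporesi_1997} yields
\[
f(g)=\int_{i\R}\bar{\phi}_{p,\nu}*f(g)\,d\mu_{\bar{\sigma}_p}(\nu)
\]
in the $p\neq n/2$ case, with $d\mu_{\bar{\sigma}_p}$ arising from the explicit Plancherel density in Camporesi.

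Third, for $p=n/2$ the additional discrete summand $\widehat{\overline{\Pi}}'^{(\pm)}_{n/2,+}$ is recorded in Proposition~\ref{C:prop:supp_plancherel_measure}(iv). Since this discrete series is realized at parameter $\nu=\frac{1}{2}$ as the unique nontrivial quotient/submodule of $\bar{\tau}_{n/2,\nu}^{(\pm)}$, its contribution to the inversion formula is given by a single evaluation of the spherical kernel $\bar{\phi}_{p,1/2}*f$, multiplied by a constant $c_p$ which is the formal degree of $\overline{\Pi}'^{(\pm)}_{n/2,+}$; combining this with the continuous contribution yields the asserted formula.

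The main obstacle is bookkeeping in the exceptional dimensions $p\in\{n/2,(n\pm1)/2\}$, where $\bar{\sigma}_p$ or $\bar{\delta}_q$ becomes reducible and the identification between the abstract support $\hat{G}'_0(\bar{\sigma}_p)$ and principal series induced from $\bar{\sigma}_p|_{M'_0}$ must be checked component by component; everything else follows by a straightforward assembly of Camporesi's formula, Proposition~\ref{C:prop:supp_plancherel_measure}, and the convolution identity of Lemma~\ref{C:lemma:spherical_function_conv_formula}.
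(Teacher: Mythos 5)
Your proposal is correct and takes essentially the same route as the paper: the theorem is obtained by assembling Camporesi's Plancherel and inversion formulas \cite[(39)]{camporesi_1997} with the convolution identity of Lemma~\ref{C:lemma:spherical_function_conv_formula}, the decomposition $\bar{\sigma}_p|_{M'_0}$ from Lemma~\ref{C:lemma:branching_sigma_p_M'_0}, and the description of the support of the Plancherel measure in Proposition~\ref{C:prop:supp_plancherel_measure}. (One terminological note: the repackaging $\Ind_{P'_0}^{G'_0}(\bar{\sigma}_p|_{M'_0}\otimes e^\nu\otimes\mathbf{1})\cong\bigoplus_{\bar\delta}\Ind_{P'_0}^{G'_0}(\bar\delta\otimes e^\nu\otimes\mathbf{1})$ you invoke is simply additivity of induction over direct sums, not induction by stages, and the identification of $c_p$ with a formal degree is a plausible interpretation but not something the theorem asserts.)
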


Following \cite[Example 4.4]{camporesi_1997} we have explicitly for $p\neq
\frac{n}{2}$, $p\in \{  1,\dots, \lfloor\frac{n-1}{2}\rfloor \}$,
$$d\mu_{\bar{\sigma}_p}(\nu)={{n-1}\choose{p}}\frac{d\nu}{c(p,\nu)c(p,-\nu)},$$
with $c$-function
$$c(p,\nu)=2^{-n+2}\frac{ \Gamma(\frac{n}{2})(\nu+\rho'-p)\Gamma(\nu)   }{\Gamma(\nu+\rho'+1)}$$
and for $p=\frac{n}{2}$
$$d\mu_{\bar{\sigma}_p^{(\pm)}}(\nu)=\frac{1}{2}{{n-1}\choose{p}}\frac{d\nu}{c(p,\nu)c(p,-\nu)},$$
with $c$-function as above and discrete constant
$$c_\frac{n}{2}=2^{-n}\frac{n!}{(\frac{n}{2})!}\prod_{s=1}^{\frac{n}{2}-1}(2s)!.$$

\section{The Plancherel formula for \texorpdfstring{$L^2(G'/K',{{\sigma}_p})$}{$L^2(G'/K',{s})$}}\label{C:sec:plancherel_G'}
In this section we lift the results of the previous section to the disconnected group $G'$.
we choose representatives $\tilde{v}_0,\tilde{w}_0\in K'$ generating the component group $G'/G'_0$ given by $$\tilde{w}_0=\diag(-1,\mathbf{1}_{n+1}), \qquad \tilde{v}_0=\diag(-1,\tilde{m}),$$
with $\tilde{m}=\diag(-1,\mathbf{1}_{n})$.
For $f \in L^2(G'_0/K'_0,\bar{\sigma}_p)$ we define for $g\in G'_0$
$$f(\tilde{w}_0g):=f(\tilde{w}_0g\tilde{w}_0^{-1})$$
and 
$$f(\tilde{v}_0g):=\sigma(\tilde{m}^{-1})f(\tilde{v}_0g\tilde{v}_0^{-1}),$$
where $\sigma \in \{  \sigma_p,\sigma_{n-p}  \}$, such that $\sigma|_{\SO(n)}=\bar{\sigma}_p.$

Moreover we define the $\End(\sigma_p)$-valued function on $G'$
$${\phi}_{p,\nu}(g)=\int_{K'}{{\sigma}_p}(\kappa(gk)k^{-1})e^{(\nu-\rho')H(gk)}\, dk.
$$

\begin{theorem}\label{C:theorem:plancherel_O(1,n)}
We have for $p\neq \frac{n}{2}$ $$L^2(G'/K',{\sigma}_p)\simeq \int^\oplus_{i\R_+} L^2-\Ind_{P'}^{G'}(\sigma_p|_{M'} \otimes e^\nu\otimes \mathbf{1})\,d\mu_{\sigma_p}(\nu)$$
and for all $f\in C_0^\infty(G'/K',{\sigma}_p)$
$$f(g)=\int_{i\R} {\phi}_{p,\nu}\ast f(g) \,d\mu_{{\sigma}_p}(\nu).$$

For $p=\frac{n}{2}$ we have
$$L^2(G'/K',{\sigma}_p)\simeq\int^\oplus_{i\R_+} L^2-\Ind_{P'}^{G'}({\sigma}_p|_{M'} \otimes e^\nu
\otimes \mathbf{1})\,d\mu_{{\sigma}_p}(\nu)\oplus   \Pi'_{\frac{n}{2},+}$$
and for all $f\in C_0^\infty(G'/K',{\sigma}_p)$
$$f(g)=\int_{i\R}  {\phi}_{p,\nu}\ast f(g) \,d\mu_{{\sigma}_p}(\nu)+c_p  {\phi}_{p,\frac{1}{2}}\ast f(g)$$
with $c_p\in \C$ as before and $$d\mu_{\sigma_p}(\nu)=d\mu_{\bar{\sigma}_{\min(p,n-p)}}(\nu)$$ in the notation of the last section.
\end{theorem}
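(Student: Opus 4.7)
The plan is to transfer the Plancherel decomposition from $G'_0$ (Theorem~\ref{C:theorem:inversion_formula}) to the full disconnected group $G'$ by exploiting that $G' = G'_0 K'$ and $K' \cap G'_0 = K'_0$, and then to reassemble the $G'_0$-irreducibles into $G'$-irreducibles under the action of the component group $G'/G'_0 \cong (\Z/2\Z)^2$.

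First, the restriction $f \mapsto f|_{G'_0}$ is a $G'_0$-equivariant linear isomorphism
$$L^2(G'/K', \sigma_p) \xrightarrow{\sim} L^2(G'_0/K'_0, \bar{\sigma}_p),$$
since any section on the left is determined by its values on $G'_0$ (because $G' = G'_0 K'$), and conversely a section on the right extends uniquely using $\sigma_p$ on each $K'$-coset (because $K' \cap G'_0 = K'_0$ and $\sigma_p|_{K'_0} = \bar\sigma_p$). Under this isomorphism, left translation by the chosen representatives $\tilde{w}_0, \tilde{v}_0$ of $G'/G'_0$ corresponds exactly to the twisting conventions stated at the beginning of this section. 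Applying Theorem~\ref{C:theorem:inversion_formula} then decomposes the right-hand side as a direct integral of $G'_0$-principal series, together with the discrete contribution $\widehat{\overline{\Pi}}'^{(+)}_{n/2,+} \oplus \widehat{\overline{\Pi}}'^{(-)}_{n/2,+}$ in the case $p = n/2$.

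Next, I reassemble the $G'_0$-irreducibles into $G'$-irreducibles. By Lemma~\ref{C:lemma:branching_sigma_p_M'_0}, the irreducible $M'_0$-constituents of $\bar\sigma_p$ are precisely the restrictions to $M'_0$ of the $M'$-constituents of $\sigma_p|_{M'}$; hence the $G'_0$-principal series $\Ind_{P'_0}^{G'_0}(\bar\delta \otimes e^\nu \otimes \mathbf{1})$ appearing in the $G'_0$-decomposition are exactly the irreducible constituents of $\Ind_{P'}^{G'}(\sigma_p|_{M'} \otimes e^\nu \otimes \mathbf{1})|_{G'_0}$. Since these $G'_0$-pieces are permuted by $G'/G'_0$, they glue into one copy of $\Ind_{P'}^{G'}(\sigma_p|_{M'} \otimes e^\nu \otimes \mathbf{1})$ in the $G'$-decomposition, and multiplicities are preserved. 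The same mechanism identifies $\widehat{\overline{\Pi}}'^{(+)}_{n/2,+} \oplus \widehat{\overline{\Pi}}'^{(-)}_{n/2,+}$ with one copy of $\Pi'_{n/2,+}$, using the fact (cf.\ Section~\ref{C:sec:unitary_reps}) that $\Pi'_{n/2,+}$ is precisely the $G'$-representation whose restriction to $G'_0$ is this sum.

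Finally, I verify the explicit inversion formula by decomposing the integral defining $\phi_{p,\nu}$ on $K'$ as a sum over $K'/K'_0$ of integrals over $K'_0$; this expresses $\phi_{p,\nu} * f$ in terms of $\bar\phi_{p,\nu}$ applied to the various components of $f$ arising from the Step~1 identification, and after summing over components one obtains the stated formula. The identity $d\mu_{\sigma_p} = d\mu_{\bar\sigma_{\min(p, n-p)}}$ is a normalization statement: the explicit $c$-function formula of Section~\ref{C:sec:plancherel_G'_0} is only written for $p \le \lfloor(n-1)/2\rfloor$, and the relation $\bar\sigma_p \cong \bar\sigma_{n-p}$ as $\SO(n)$-representations extends it to all $p$ by symmetry. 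The principal technical obstacle is the case $p = n/2$, where $\bar\sigma_p$ is $K'_0$-reducible while $\sigma_p$ is $K'$-irreducible: here one must verify that the component group acts transitively on the pair $\widehat{\overline{\Pi}}'^{(\pm)}_{n/2,+}$ (and on the corresponding continuous-spectrum pair) so that they combine into a single irreducible $G'$-component of multiplicity one, with the Plancherel density summing correctly.
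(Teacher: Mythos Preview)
Your proposal is correct and follows essentially the same route as the paper: identify $L^2(G'/K',\sigma_p)$ with $L^2(G'_0/K'_0,\bar\sigma_p)$ via the equality $G'/K'=G'_0/K'_0$, apply the $G'_0$-inversion formula, and then upgrade to $G'$ using that the induced representations and (for $p=n/2$) the discrete piece assemble correctly under the component group. The only organizational difference is in verifying the inversion formula: rather than summing over $K'/K'_0$, the paper observes directly that the integrand is right $M'_0$-invariant (resp.\ right $K'_0$-invariant) and uses the equalities $K'_0/M'_0=K'/M'$ and $G'_0/K'_0=G'/K'$ to replace $\bar\sigma_p$ by $\sigma_p$ and enlarge the domains of integration in one step, which avoids any coset bookkeeping.
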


\begin{proof}
Let $p\neq \frac{n}{2}$ and w.l.o.g. $p < n-p$.
Let $f\in C_0^\infty(G'/K',\sigma_p)$ and $h=h_ch_0 \in G'$ with $h_0\in G'_0$ and $h_c\in G'/G'_0$. Then by the construction above
\begin{align*}
f(h)&=\sigma_p(h_c^{-1})\int_{i\R} \bar{\phi}_{p,\nu}\ast f(h_ch_0h_c^{-1}) \,d\mu_{\bar{\sigma}_p}(\nu) \\
&=\sigma_p(h_c^{-1})\int_{i\R}\int_{K'_0}\bar{\sigma}_p(\kappa(h_ch_0^{-1}h_c^{-1}k))e^{(\nu-\rho')H(h_ch_0^{-1}h_c^{-1}k)}\\
&\hspace{4.4cm}\times \int_{G'_0} \bar{\sigma}_p(\kappa(g^{-1}k)^{-1})e^{-(\nu+\rho')H(g^{-1}k)}f(g)\,dg\,dk\,d\mu_{\bar{\sigma}_p}(\nu).\\
\intertext{The $K'_0$-integral is right $M'_0$-invariant and the $G'_0$-integral is right $K'_0$-invariant. Moreover $K'_0/M'_0=K'/M'$ and $G'_0/K'_0=G'/K'$ and since $\bar{\sigma}_p=\sigma_p|_{\SO(n)}$, if we replace $\bar{\sigma}_p$ by ${\sigma}_p$ we obtain right $M'$ and right $K'$ invariant integrals }
&=\int_{i\R}\int_{K'}{\sigma}_p(\kappa(h^{-1}k))e^{(\nu-\rho')H(h^{-1}k)} \\ &\hspace{4.4cm}\times \int_{G'} {\sigma}_p(\kappa(g^{-1}k)^{-1})e^{-(\nu+\rho')H(g^{-1}k)}f(g)\,dg\,dk\,d\mu_{{\sigma}_p}(\nu)
\\ &= \int_{i\R} \phi_{p,\nu}\ast f(h)\,d\mu_{\bar{\sigma}_p}(\nu).
\end{align*}
For $p=\frac{n}{2}$ the proof works in the same way using the direct sum $\bar{\sigma}_{\frac{n}{2}}=\bar{\sigma}_{\frac{n}{2}}^{(+)}\oplus  \bar{\sigma}_{\frac{n}{2}}^{(-)}    $ and  carrying the discrete summand through the calculation. If $p>n-p$ we have to apply the $\SO_0(1,n)$-Plancherel and inversion formula for $\bar{\sigma}_{n-p}$ which concludes the argument.
\end{proof}

Similarly we define the corresponding Fourier-transform for $f\in C^\infty(G'/K',\sigma_p^w)$ by
$$\tilde{f}(k,p,\nu)=\int_{G'} \sigma_p^w(\kappa(g^{-1}k)^{-1})e^{(-\nu-\rho')H(g^{-1}k)}f(g)\,dg.$$
Then clearly for $\nu\in i\R$
\begin{equation}\label{C:eq:norm_fourier}
\langle \phi_{p,\nu}\ast f,f\rangle_{L^2(G')}=\norm{  \tilde{f}(\cdot,p,\nu)   }^2_{L^2(K')}.
\end{equation}
We remark that we use the equivalent representation $\sigma_p^w$ which is twisted by $w$ for convenience in the following.

\section{Branching laws for unitary representations}\label{C:sec:branching_laws}
We lift the results of the section before to $\pi_{p,\lambda}^{\pm}$ and prove the main theorems.

\begin{theorem}\label{C:theorem:coordinate_change}
Let $\Re(\lambda)>-\frac{1}{2}$ and $f\in \pi_{p,\lambda}^\pm$. We have
$$2\tilde{\Phi_\pm f}(\cdot, p, \nu)=A_{(p,\lambda),(p-1,\nu)}^\mp f+A_{(p,\lambda),(p,\nu)}^\pm f.$$
\end{theorem}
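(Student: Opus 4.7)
My plan is to unfold the Fourier transform on both sides and match integrand kernels after reducing to an integral over the open Bruhat cell $\Nbar$. By $G'$-equivariance of both sides it suffices to verify the identity at $k=e$ (equivalently at the base point $eP' \in G'/P'$). Substituting $\Phi_\pm f(g) = \tfrac{1}{2}(f(g\bar{n}_{e_n})\pm f(g\tilde{w}_0\bar{n}_{e_n}))$ into the definition of $\tilde{\Phi_\pm f}$ produces two summands that I treat in parallel.

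For each summand, parameterize $G'$ via $\Nbar'\times\mathfrak{a}\times \Orm(1)\times \tilde{K}'$, using the Iwasawa decomposition $G'=\Nbar'AK'$ combined with the coset splitting $K'/\tilde{K}'\cong \Orm(1)$ from Lemma~\ref{C:lemma:H-Stab}. Writing $g=\bar{n}' e^{rH}\Xi k'$ with $\Xi=\diag(\omega,\omega,\mathbf{1}_n)$, $\omega\in\{\pm1\}$, and $k'\in \tilde{K}'=\Orm(n)$, the factor $k'$ moves past $\bar{n}_{e_n}$ up to a twist (cf.\ the proof of Lemma~\ref{C:lemma:Phi}) and combines with the $K'$-dependence of the Fourier kernel via Corollary~\ref{C:cor:kernel_plancherel}(ii); this $k'$-integration eventually produces the projection $\pr_{\sigma_p\to\delta_q}$ onto a component of the $\Orm(n-1)$-branching $\sigma_p^w|_{\Orm(n-1)}=\delta_{p-1}\oplus \delta_p$. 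The remaining factors combine with \eqref{C:eq:center_matrix_decomp} and Lemma~\ref{C:prop:matrix_decompositions} to give $g\bar{n}_{e_n}=\bar{n}_X\cdot e^{rH}\Xi\cdot(\text{$\tilde{K}'$-term})$ with $X=(X',e^{-r}\omega)$; the $P$-equivariance of $f$ then pulls out a factor $e^{-(\lambda+\rho)r}\xi^{-1}(\Xi)$. The Iwasawa Jacobian $e^{2\rho' r}$ from \eqref{C:eq:integral_formula_KAN}, together with the change of variables $dX'\,dr\,d\omega = |X_n|^{-1}\,dX$ (exactly as in the proof of Lemma~\ref{C:lemma:L^2_condition}), yields the overall weight $|X_n|^{\lambda-\rho+\nu+\rho'}$. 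For the second summand, Lemma~\ref{C:prop:matrix_decompositions:i} shows $\tilde{w}_0\bar{n}_{e_n}=\bar{n}_{-e_n}\cdot man$, so the analogous computation produces the same integrand but with $X_n$ replaced by $-X_n$, effectively inserting a factor $\sgn(X_n)$.

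It remains to identify the $\sigma_p^w$-part of the integrand. Corollary~\ref{C:cor:kernel_plancherel}(ii) equates $\pr_{\Orm(n)}\kappa(g^{-1})$ with a conjugate of the $\Orm(n)$-part of the $M$-factor in the $\Nbar MAN$ decomposition of $\tilde{w}_0 g\bar{n}_{e_n}$, which by Lemma~\ref{C:prop:matrix_decompositions:i} is precisely $\psi_n(X)$; Corollary~\ref{C:cor:kernel_plancherel}(iii) gives $\pr_{\Orm(1)}\kappa(g^{-1})=\sgn X_n$, accounting for the $\Orm(1)$-character. Finally, Corollary~\ref{C:cor:kernel_plancherel}(i) produces the radial factor $|X|^{-2(\nu+\rho')}$ from $e^{-(\nu+\rho')H(g^{-1})}$. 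Assembled, the integrand against $f(\bar{n}_X)$ matches the distribution kernel $|X|^{-2(\nu+\rho')}|X_n|^{\lambda-\rho+\nu+\rho'}\sigma_p(\psi_n(X))$ of Section~\ref{C:sec:classification_sbos}, multiplied by $\mathbf{1}$ or $\sgn(X_n)$ (depending on the summand of $\Phi_\pm$) and projected to $\delta_{p-1}$ or $\delta_p$, giving the cross-pairing $A^\mp_{(p,\lambda),(p-1,\nu)}+A^\pm_{(p,\lambda),(p,\nu)}$.

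The integrals converge absolutely for $\Re\lambda>-\tfrac{1}{2}$ (Lemma~\ref{C:lemma:L^2_condition}) and $\Re\nu$ sufficiently negative, which is precisely the regime in which the $A^\pm_{(p,\lambda),(q,\nu)}$ are given by their elementary integral kernels; the identity then extends to all $\nu$ by meromorphic continuation. The principal obstacle will be the careful bookkeeping of the three independent sign sources---the character $\alpha$ in $\pi^\pm_{p,\lambda}$, the $\pm$ in $\Phi_\pm$ dictating parity in $\omega$, and the twist $\sigma_p^w$ versus $\sigma_p$---since it is precisely their interplay that forces the specific cross-pairing $A^\mp + A^\pm$ (rather than a diagonal one) in the statement.
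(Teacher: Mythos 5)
Your proposal follows essentially the same route as the paper's own proof: unfold the Fourier transform at a single point (the paper works with general $h$, but $G'$-equivariance makes reducing to $k=e$ harmless), parametrize via the Iwasawa decomposition $G'=\Nbar' A K'$, use Corollary~\ref{C:cor:kernel_plancherel} and Lemma~\ref{C:prop:matrix_decompositions} to rewrite the kernel, pass to $\Nbar$-coordinates via $X_n=e^{-r}\omega$, and read off the integral kernels of $A^\pm_{(p,\lambda),(q,\nu)}$. The final exponent bookkeeping ($|X|^{-2(\nu+\rho')}|X_n|^{\lambda-\rho+\nu+\rho'}$) checks out against the definitions in Section~\ref{C:sec:classification_sbos}, with the $\rho-\rho'=\tfrac12$ cancellation absorbing the measure-theoretic $|X_n|^{-1}$.

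One step is misattributed. You write that ``this $k'$-integration eventually produces the projection $\pr_{\sigma_p\to\delta_q}$.'' In fact the $\tilde K'$-integration is \emph{trivial}: the twist $\sigma_p(wk'w^{-1})^{-1}$ coming from the $P$-covariance of $f$ (via $k'\bar n_{e_n}=\bar n_{e_n}\diag(1,1,wk'w^{-1})$) cancels exactly against the $\sigma_p^w(k')$ that the kernel $\sigma_p^w(\kappa(g^{-1})^{-1})$ picks up when $g\mapsto gk'$; nothing is left for the $k'$-integral to detect. The projection $\pr_{\sigma_p\to\delta_q}$ enters purely algebraically: the Fourier transform is naturally $\sigma_p$-valued, one decomposes $\sigma_p|_{M'}=\delta_{p-1}\oplus\delta_p$, and the two summands of $\Phi_\pm f$ (over $X_n>0$ and $X_n<0$) recombine into $\mathbf 1$- versus $\sgn(X_n)$-weighted integrals precisely because $\sigma_p(\diag(-\mathbf 1_{n-1},1))$ acts by $(-1)^q$ on $\delta_q$, so that the factor $(-1)^p\sigma_p(\diag(-\mathbf 1_{n-1},1))$ is $+1$ on $\delta_p$ and $-1$ on $\delta_{p-1}$. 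This is exactly how the paper obtains the cross-pairing; your $\Orm(1)$-integral over $\omega$ does its share of the work, but the $\tilde K'$-factor does not. This is a misattribution in an otherwise correct sketch, not a gap.
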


\begin{proof}
We carry out the proof for $\alpha=\mathbf{1}$ since the other case works analogously.
Let $f \in \pi_{p,\lambda}^+$. By Lemma~\ref{C:lemma:L^2_condition} $\Phi_+f\in L^2(G'/K',\sigma_p^w)$ such that we can apply the Fourier transform.
Clearly the integrand is right $K'$-invariant in $g$ such that we have by the $\Nbar'AK'$ Iwasawa decomposition and by the integral formula
$$\int_{G'/K'}f(g)\,dg=\int_{\Nbar\times \mathfrak{a}} e^{2\rho'X}f(\overline{n} e^{X} )\,d\overline{n}\,dX,$$
\begin{align*}
& \int_{G'}    {\sigma_p}^w(\pr_{\Orm(n)}(\kappa(g^{-1}h))  e^{-(\nu+\rho')H(g^{-1}h)} \Phi_+f(g)\,dg \\
=&\int_{\R^{n-1}\times \R}
{\sigma_p}^w(\pr_{\Orm(n)}(\kappa(e^{-rH}\overline{n}_{(-X',0)}))   e^{-(\nu+\rho')H(e^{-rH}\overline{n}_{(-X',0)})} \Phi_+f(h\overline{n}_{(X',0)}e^{rH})\,dX'\,dr \\
 \intertext{which is by Corollary~\ref{C:cor:kernel_plancherel}}
 =&\int_{\R^{n-1}\times \R} {\sigma_p}(\pr_{\Orm(n)}(m(\tilde{w}_0\overline{n}_{(X',0)}e^{rH}\overline{n}_{e_n})) \\ & \hspace{4.5cm}\times e^{(-\nu+\rho')r}\abs{e^{-2r}+\abs{X'}^2}^{{-\nu-\rho'}}  \Phi_+f(h\overline{n}_{(X',0)}e^{rH})\,dX'\,dr \\
 \intertext{which is by Lemma~\ref{C:prop:matrix_decompositions}}
=&\frac{1}{2}\int_{\R^{n-1}\times \R_+} {\sigma_p}(\psi_n(X',X_n)) \abs{X_n}^{\lambda-\rho-\nu-\rho'}\abs{\abs{X_n}^2+\abs{X'}^2}^{{-\nu-\rho'}}
\\
& \hspace{4cm} \times (f(h\overline{n}_{(X',\abs{X_n})})+ f(h\overline{n}_{(X',-\abs{X_n})} \diag (\mathbf{1}_{n+1},-1))\,dX'\,dX_n
\\
=&\frac{1}{2}\int_{\R^{n-1}\times \R_+} {\sigma_p}(\psi_n(X',X_n)) \abs{X_n}^{\lambda-\rho-\nu-\rho'}\abs{\abs{X_n}^2+\abs{X'}^2}^{{-\nu-\rho'}}
\\
& \hspace{2.6cm} \times (f(h\overline{n}_{(X',\abs{X_n})})+ (-1)^p \sigma_p(\diag(-\mathbf{1}_{n-1},1))f(h\overline{n}_{(X',-\abs{X_n})})\,dX'\,dX_n
\intertext{
Since $\sigma_P|_{M'}=\delta_{p-1}\oplus \delta_p$ we project to the two subspaces separately. Since $\delta_{q}(-\mathbf{1}_{n-1})=(-1)^{q}$ we obtain
}
\\
=&\frac{1}{2}\int_{\R^{n-1}\times \R_+} \pr_{\sigma_p\to\delta_p} \circ {\sigma_p}(\psi_n(X',X_n)) \abs{X_n}^{\lambda-\rho-\nu-\rho'}\abs{\abs{X_n}^2+\abs{X'}^2}^{{-\nu-\rho'}}
\\
& \hspace{6.5cm} \times   (f(h\overline{n}_{(X',\abs{X_n})})+ f(h\overline{n}_{(X',-\abs{X_n})})\,dX'\,dX_n
\\&+ \frac{1}{2}\int_{\R^{n-1}\times \R_+} \pr_{\sigma_p\to\delta_{p-1}} \circ {\sigma_p}(\psi_n(X',X_n)) \abs{X_n}^{\lambda-\rho-\nu-\rho'}\abs{\abs{X_n}^2+\abs{X'}^2}^{{-\nu-\rho'}}
\\
& \hspace{6.5cm} \times  (f(h\overline{n}_{(X',\abs{X_n})})- f(h\overline{n}_{(X',-\abs{X_n})})\,dX'\,dX_n
\\=&\frac{1}{2}\int_{\R^{n}} \pr_{\sigma_p\to\delta_p} \circ{\sigma_p}(\psi_n(X)) \abs{X_n}^{\lambda-\rho-\nu-\rho'}\abs{X}^{-2{(\nu+\rho')}}
f(h\overline{n}_X)\,dX
\\&\hspace{2cm}+\frac{1}{2}\int_{\R^{n}} \pr_{\sigma_p\to\delta_{p-1}} \circ{\sigma_p}(\psi_n(X)) \abs{X_n}^{\lambda-\rho-\nu-\rho'}\abs{X}^{-2{(\nu+\rho')}} \sgn(X_n)
f(h\overline{n}_X)\,dX. \\
\intertext{which is by Proposition~\ref{prop:A_action} equal to}
=&\frac{1}{2}\left(A_{(p,\lambda),(p,\nu)}^+f\right)(h)+\frac{1}{2}\left(A_{(p,\lambda),(p-1,\nu)}^-f\right)(h) \qedhere
\end{align*}
\end{proof}

Combining this result with Lemma~\ref{C:lemma:L^2_condition} and Theorem~\ref{C:theorem:plancherel_O(1,n)} we immediately obtain the unitary branching law and Plancherel formula for the unitary principal series.
Therefore we define the following functions which depend meromorphically on $\lambda$ and $\nu$.
\begin{multline*}
	c(p,\lambda,\nu)^\pm:=\\\frac{c(\min\{p,n-p\},-\nu)c(\min\{p,n-p\},\nu)}{\Gamma(\frac{-\lambda+\rho-\nu-\rho'}{2}-\frac{\pm 1 -1}{4})
		\Gamma(\frac{-\lambda+\rho+\nu-\rho'}{2}-\frac{\pm 1 -1}{4})
		\Gamma(\frac{\lambda+\rho-\nu-\rho'}{2}-\frac{\pm 1 -1}{4})
		\Gamma(\frac{\lambda+\rho+\nu-\rho'}{2}-\frac{\pm 1 -1}{4})
	},
\end{multline*}
$$c(p,\lambda,q,k)_{Res}^\pm:=\pi
\Res_{\mu=\lambda+1-(\pm\frac{1}{2})+2k}\left(\frac{1}{c(p,\lambda,\mu)^\pm t'(p,q,\mu)c_C(p,q,\mu)^2}\right)
,$$
$$c(\lambda)_{d}:=\frac{c_{\frac{n}{2}} \Gamma(\rho)\Gamma(\frac{-\lambda+1}{2})  \Gamma(\frac{-\lambda+2}{2}) \Gamma(\frac{\lambda+1}{2}) \Gamma(\frac{\lambda+2}{2})      }
{ 2 \pi^{\frac{n-1}{2}}
}.$$

The following can be easily read off the definitions of these scalars.
\begin{lemma}\label{C:lemma:scalars_positive}
	\begin{enumerate}[label=(\roman{*})]
		\item Let $p=0$ and $k\in \Z_{\geq 0}$. The function $c(0,\lambda,0,k)^+_{Res}$ is holomorohic in $\lambda$ in the range $\Re\lambda<-\frac{1}{2}-2k$ and strictly positive for $\Im \lambda=0$. The function $c(0,\lambda,0,k)^-_{Res}$ is holomorphic in $\lambda$ in the range $\Re\lambda <-\frac{3}{2}-2k$ and strictly positive for $\Im \lambda=0$.
		\item Let $0<p<\frac{n-1}{2}$, $q=p-1,p$ and $k\in \Z_{\geq 0}$. The function $c(p,\lambda,q,k)^+_{Res}$  holomorphic in $\lambda$ in the range $\Re\lambda\in [p-\rho,-\frac{1}{2}-2k)$ and  strictly positive for $\Im \lambda=0$. The function $c(p,\lambda,q,k)^-_{Res}$ is holomorphic in $\lambda$ in the range $\Re\lambda\in [p-\rho,-\frac{3}{2}-2k)$and  strictly positive for $\Im \lambda=0$.
		\item Let $\frac{n+1}{2}<p<n$, $q=p-1,p$ and $k\in \Z_{\geq 0}$. The function $c(p,\lambda,q,k)^+_{Res}$ is holomorphic in $\lambda$ in the range $\Re\lambda\in [\rho-p,-\frac{1}{2}-2k)$ and strictly negative for $\Im \lambda=0$. The function $c(p,\lambda,q,k)^-_{Res}$ is holomorphic in $\lambda$ in the range $\Re\lambda\in [\rho-p,-\frac{3}{2}-2k)$ and strictly negative for $\Im \lambda=0$.
		\item Let $p=n$ and $k\in \Z_{\geq 0}$. The function $c(n,\lambda,n-1,k)^+_{Res}$ is holomorphic in $\lambda$ in the range $\Re\lambda <-\frac{1}{2}-2k$ and strictly negative for $\Im \lambda=0$.The function $c(n,\lambda,n-1,k)^-_{Res}$ is holomorphic in $\lambda$ in the range $\Re\lambda <-\frac{3}{2}-2k$ and strictly negative for $\Im \lambda=0$.
	\end{enumerate}
\end{lemma}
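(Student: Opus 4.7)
The plan is to compute the residue explicitly, track the resulting expression as a product/quotient of Gamma functions in $\lambda$, and determine analyticity and signs case-by-case.

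First I would identify the source of the pole. Among the four Gamma functions in the numerator of $1/c(p,\lambda,\mu)^\pm$, substituting $\mu_0 := \lambda + 1 - (\pm\frac{1}{2}) + 2k$ one finds that exactly the Gamma $\Gamma\bigl(\frac{\lambda+\rho-\mu-\rho'}{2}-\frac{\pm 1 -1}{4}\bigr)$ has argument equal to $-k \in -\Z_{\geq 0}$ (using $\rho-\rho'=\frac12$), while the other three have arguments bounded away from $-\Z_{\geq 0}$ on the real line. The standard residue formula $\Res_{s=-k}\Gamma(s)=(-1)^k/k!$ combined with the change of variables factor $-2$ gives the residue contribution, which is a nonzero constant times $(-1)^k/k!$.

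Next I would substitute $\mu_0$ into the remaining factors $c(\min\{p,n-p\},\pm\mu)$, $t'(p,q,\mu)$, and $c_C(p,q,\mu)^2$, using the explicit formulas from Theorem~\ref{C:theorem:functional_equations} (for $t'$), the definition of $c_C$, and the Camporesi $c$-function expression recalled at the end of Section~\ref{C:sec:plancherel_G'_0}. Several $\Gamma(\mu+\rho'+1)$-type factors will cancel between $c_C^2$ (denominator) and $t'$ (which has $\Gamma(\mu+\rho'+1)$ in its denominator), and the $c$-function numerator of $1/c(p,\lambda,\mu)^\pm$ will combine with the $c$-function appearing through the Camporesi normalization. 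The net result should be expressible as an elementary factor times a ratio of Gammas whose arguments are linear in $\lambda$.

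For holomorphy I would inspect where the remaining Gammas can introduce poles or where the polynomial factors in $t'$ (namely $(\mu+\rho'-p)$ or $(\mu-\rho'+p-1)$) and in $c_C$ can produce zeros that make the whole expression singular. The lower bounds $\Re\lambda\geq p-\rho$ (for small $p$) and $\Re\lambda\geq \rho-p$ (for large $p$) are precisely the thresholds where the polynomial factors from $t'$ or $c_C$ evaluated at $\mu_0$ flip sign or vanish, and the upper bounds $\Re\lambda<-\frac12-2k$ or $\Re\lambda<-\frac32-2k$ are where the nearest pole of the surviving Gamma $\Gamma(\lambda+\frac{1}{2}+k)$ (respectively its shifted version in the $-$ case) appears; below these thresholds the expression is a genuine holomorphic function of $\lambda$.

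The sign on the real axis is then read off. I would combine: (a) $\Gamma(x)>0$ on each interval $(-j-1,-j)$ with alternating sign $(-1)^j$; (b) the explicit positivity of $c(p,\nu)c(p,-\nu)$ on $i\R$ and the sign of $c(p,\mu_0)c(p,-\mu_0)$ when $\mu_0$ is real; (c) the sign of the linear factor in $t'$; (d) the sign of $c_C$. The dichotomy between positivity (cases (i),(ii)) and negativity (cases (iii),(iv)) comes from the sign flip of $t'(p,q,\mu_0)$ when $q=p-1$ versus $q=p$ combined with the sign flip of the Knapp--Stein scalars when passing from $p<\frac{n}{2}$ to $p>\frac{n}{2}$ (recalled in the proposition preceding Section~\ref{C:sec:unitary_reps}); these two flips conspire to preserve the sign pattern announced in the statement. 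The main obstacle is bookkeeping: there are four cases, two superscripts $\pm$, and two values of $q$, each requiring careful sign tracking of roughly half a dozen factors, and one must verify in each subcase that the ranges stated avoid all extraneous zeros and poles arising from $t'$, $c_C$, and the surviving Gammas simultaneously.
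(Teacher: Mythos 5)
Your overall plan -- compute the residue of $1/(c(p,\lambda,\mu)^\pm\, t'(p,q,\mu)\, c_C(p,q,\mu)^2)$ explicitly using the Gamma residue formula, cancel Gammas between $c_C^2$, $t'$, and Camporesi's $c$-function, and read off holomorphy and sign from the surviving factors -- is precisely what the paper has in mind: it gives no formal proof, only the explicit closed forms of $c(0,\lambda,0,k)^\pm_{Res}$ in the remark that follows, and tells the reader the other cases are similar. Identifying that the pole comes from $\Gamma\bigl(\tfrac{\lambda+\rho-\mu-\rho'}{2}-\tfrac{\pm1-1}{4}\bigr)$, whose argument is exactly $-k$ at $\mu_0$, and converting with the Jacobian factor $-2$ is also correct.

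Two of your intermediate claims are inaccurate, however, and would derail a careful write-up. First, it is not true that the other three Gammas in $1/c(p,\lambda,\mu_0)^\pm$ are bounded away from $-\Z_{\geq 0}$ on the real line: in the $+$ case the factor $\Gamma\bigl(\tfrac{\lambda+\rho+\mu_0-\rho'}{2}\bigr)=\Gamma(\lambda+\tfrac12+k)$ does hit non-positive integers for real $\lambda$ in the ranges considered, and only the cancellation against $\Gamma(\mu_0+\rho'+1)$-type factors coming from $t'$ and $c_C^2$ removes it (indeed no such Gamma appears in the paper's explicit formula). Second, your claim that the upper endpoint $\Re\lambda=-\tfrac12-2k$ (resp.\ $-\tfrac32-2k$) marks "the nearest pole of the surviving Gamma $\Gamma(\lambda+\tfrac12+k)$" is wrong: comparing with the explicit expression
$$c(0,\lambda,0,k)^+_{Res}=\frac{2^{2n-3}\,(-\lambda-\tfrac12-2k)\,\Gamma(-\lambda+\rho-1-2k)\,\Gamma(-\lambda-k)\,\Gamma(k+\tfrac12)}{\pi^{\frac{n-3}{2}}\,\Gamma(\tfrac n2)^2\,k!\,\Gamma(-\lambda-k+\tfrac12)}$$
one sees that $\Re\lambda=-\tfrac12-2k$ is a \emph{zero} of the linear factor $(-\lambda-\tfrac12-2k)$, not a pole of any Gamma; the nearest Gamma pole, from $\Gamma(-\lambda+\rho-1-2k)$, sits at $\Re\lambda=\rho-1-2k$, strictly above the claimed threshold since $n>1$. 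So the stated upper bound is where the sign flips, while holomorphy actually persists a bit further. These errors suggest the cancellations have not actually been carried out; since the entire content of the lemma is exactly this bookkeeping (and the sign-tracking of $t'$, $c_C$, and the Knapp--Stein scalars in each of the four regimes and two parities), the proposal as it stands has the right skeleton but has not verified the part that matters.
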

\begin{remark}
	For example we have $$c(0,\lambda,0,k)^+_{Res}=\frac{2^{2n-3}(-\lambda-\frac{1}{2}-2k)\Gamma(-\lambda+\rho-1-2k)\Gamma(-\lambda-k)\Gamma(k+\frac{1}{2})}{\pi^{\frac{n-3}{2}}\Gamma(\frac{n}{2})^2k!\Gamma(-\lambda-k+\frac{1}{2})}$$
	and $$c(0,\lambda,0,k)^-_{Res}=\frac{2^{2n-3}(-\lambda-\frac{1}{2}-2k)\Gamma(-\lambda+\rho-1-2k)\Gamma(-\lambda-k+1)\Gamma(k+\frac{3}{2})}{\pi^{\frac{n-3}{2}}\Gamma(\frac{n}{2})^2k!\Gamma(-\lambda-k+\frac{1}{2})}.$$
	We don't give the explicit expressions in every case for the sake of the length and readability of this article.
\end{remark}
\begin{lemma}\label{C:lemma:branching_law_unitary_ps}
For $\lambda \in i\R $  and $p\neq \frac{n}{2}$ we have
$$\hat{\pi}_{p,\lambda}^\pm|_{G'}\simeq \bigoplus_{\alpha=\pm}\bigoplus_{q=p-1,p}\int^\oplus_{i\R_+} \hat{\tau}_{q,\nu}^\alpha \, d\mu_{\sigma_p}(\nu)$$

and for $f\in \pi_{p,\lambda}^\pm$
$$
\norm{f}_{L^2(K)}^2=\frac{1}{4}\sum_{\alpha=+,-}\sum_{q=p-1,p}\int_{i\R} \norm{\tilde{A}_{(p,\lambda),(q,\nu)}^\alpha f}_{L^2(K')}^2 \frac{d\nu}{c(p,\lambda,\nu)^\alpha}.
$$

For $\lambda \in i\R  $  and $p=\frac{n}{2}$ we have
$$\hat{\pi}_{p,\lambda}^\pm|_{G'}\simeq   \widehat{\Pi}'_{\frac{n}{2},+} \oplus \widehat{\Pi}'_{\frac{n}{2},-} \oplus \bigoplus_{\alpha=\pm}\bigoplus_{q=p-1,p}\int^\oplus_{i\R_+} \hat{\tau}_{q,\nu}^\alpha \, d\mu_{\sigma_p}(\nu) $$

and for $f\in \pi_{p,\lambda}^\pm$
\begin{multline*}
\norm{f}_{L^2(K)}^2=\frac{1}{4}\sum_{\alpha=+,-}\sum_{q=p-1,p} \int_{i\R} \norm{\tilde{A}_{(p,\lambda),(q,\nu)}^\alpha f}_{q,\nu}^2 \frac{d\nu}{c(p,\lambda,\nu)^\alpha} \\+c(\lambda)_d\norm{\tilde{A}_{(p,\lambda),(p,-\frac{1}{2})}^{-,quo} f}_{p,-\frac{1}{2},quo}^2+c(\lambda)_d\norm{\tilde{A}_{(p,\lambda),(p-1,-\frac{1}{2})}^{-,quo} f}_{p-1,-\frac{1}{2},quo}^2.
\end{multline*}
\end{lemma}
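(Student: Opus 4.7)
The strategy is to pull back the Plancherel decomposition of $L^2(G'/\tilde K',\sigma_p)$ along the map $\Phi$ of Lemma~\ref{C:lemma:L^2_condition}, identify the Fourier transform on the vector bundle with the classified symmetry breaking operators via Theorem~\ref{C:theorem:coordinate_change}, and track the gamma and $c$-function factors carefully.

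Concretely, I first fix $\lambda\in i\R$ and use Lemma~\ref{C:lemma:L^2_condition} to view $\Phi$ as a unitary $G'$-equivariant isomorphism $\hat\pi_{p,\lambda}^\pm|_{G'}\to L^2(G'/\tilde K',\sigma_p)$. The fibration $G'/\tilde K'\to G'/K'$ with fibre $\widehat{\Orm(1)}$ gives the orthogonal splitting
\[
L^2(G'/\tilde K',\sigma_p)\;\cong\; L^2(G'/K',\sigma_p)\;\oplus\;\bigl(\chi_{-,+}\otimes L^2(G'/K',\sigma_p)\bigr),
\]
under which $\Phi f=\Phi_+f+\Phi_-f$. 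Since $\chi_{-,+}\otimes\tau_{q,\nu}^\alpha\cong\tau_{q,\nu}^{-\alpha}$ (and similarly $\chi_{-,+}\otimes\Pi'_{n/2,+}\cong\Pi'_{n/2,-}$), applying Theorem~\ref{C:theorem:plancherel_O(1,n)} to each summand yields the direct integral decomposition over both signs $\alpha=+,-$ together with the two discrete components $\widehat\Pi'_{n/2,\pm}$ when $p=n/2$.

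Next I convert the Fourier transform into symmetry breaking operators using Theorem~\ref{C:theorem:coordinate_change}: $2\widetilde{\Phi_\pm f}(\,\cdot\,,p,\nu)=A_{(p,\lambda),(p-1,\nu)}^{\mp}f+A_{(p,\lambda),(p,\nu)}^{\pm}f$. The two summands on the right land in the $(M'A'N')$-isotypic pieces $\delta_{p-1}$ and $\delta_p$ of $\sigma_p|_{M'}$, hence are $L^2(K')$-orthogonal for each fixed $\nu$. Combining this with equation~\eqref{C:eq:norm_fourier} gives, for $\lambda\in i\R$,
\[
\norm{f}_{L^2(K)}^2=\norm{\Phi_+f}^2+\norm{\Phi_-f}^2=\tfrac14\sum_{\alpha=\pm}\sum_{q=p-1,p}\int_{i\R}\norm{A_{(p,\lambda),(q,\nu)}^\alpha f}_{L^2(K')}^2\,d\mu_{\sigma_p}(\nu)
\]
up to the discrete piece in the $p=n/2$ case. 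Rewriting $A=\Gamma(\tfrac{\lambda+\rho+\nu-\rho'}{2}-\tfrac{\pm 1-1}{4})\Gamma(\tfrac{\lambda+\rho-\nu-\rho'}{2}-\tfrac{\pm 1-1}{4})\tilde A$ according to the definition of $\tilde A^\pm$ in Section~\ref{C:sec:classification_sbos}, and noting that for $\lambda,\nu\in i\R$ the squared modulus $|\Gamma(z)|^2=\Gamma(z)\Gamma(\bar z)$ produces exactly the four gamma factors in the denominator of $c(p,\lambda,\nu)^\pm$ when combined with the Plancherel density $(c(\min(p,n-p),\nu)c(\min(p,n-p),-\nu))^{-1}$, one recovers the stated normalization $\frac{d\nu}{c(p,\lambda,\nu)^\pm}$.

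Finally, for $p=n/2$ the additional discrete summand at $\nu=\frac12$ of Theorem~\ref{C:theorem:plancherel_O(1,n)} contributes the two copies $\widehat\Pi'_{n/2,\pm}$; identifying the spherical convolution at $\nu=\tfrac12$ with the Fourier transform and then applying Theorem~\ref{C:theorem:coordinate_change} at $\nu=-\tfrac12$ (together with Proposition~\ref{C:prop:norms_equal}, which lets us replace the two $\tilde A^{+,quo}$-contributions by $\tilde A^{-,quo}$-contributions at the cost of an explicit scalar absorbed into $c(\lambda)_d$) produces the discrete terms in the norm formula. I expect the main technical obstacle to be precisely this bookkeeping of gamma factors at the boundary point $\nu=-\tfrac12$: one needs both the holomorphic normalization of $\tilde A^\pm$ and the relation of Proposition~\ref{C:prop:norms_equal} to express the discrete contribution uniformly in the $-,quo$ form appearing in the lemma, with the discrete constant $c(\lambda)_d$ computed by matching the Camporesi normalization $c_{n/2}$ with the ratio of the two gamma prefactors.
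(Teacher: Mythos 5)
Your proposal follows the same route as the paper: view $\Phi$ as a unitary $G'$-isomorphism onto $L^2(G'/\tilde K',\sigma_p)$ (Lemma~\ref{C:lemma:L^2_condition}), split into $\Phi_\pm$, invoke the inversion formula of Theorem~\ref{C:theorem:plancherel_O(1,n)} together with \eqref{C:eq:norm_fourier}, convert the vector-bundle Fourier transform to the symmetry breaking operators via Theorem~\ref{C:theorem:coordinate_change}, use $M'$-isotypic orthogonality to separate the $q=p-1$ and $q=p$ pieces, renormalize $A$ to $\tilde A$ to obtain the stated density $d\nu/c(p,\lambda,\nu)^\alpha$, and for $p=\frac n2$ feed the discrete term of the inversion formula through Proposition~\ref{C:prop:func_equations_quotients} and Proposition~\ref{C:prop:norms_equal}. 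This is exactly what the paper does, so the argument is essentially identical.

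One small imprecision in your sketch of the $p=\tfrac n2$ case: each $\langle\phi_{p,\frac12}\ast\Phi_\pm f,\Phi_\pm f\rangle$ splits (by the coordinate-change formula at $\nu=\pm\frac12$) into a $+$-operator term and a $-$-operator term. Only the $+$-operator term needs Proposition~\ref{C:prop:norms_equal} to be rewritten in the $-,quo$ form; the $-$-operator term is already of that shape after Proposition~\ref{C:prop:func_equations_quotients}, just with a different gamma prefactor. The two contributions are then combined into a single scalar $c(\lambda)_d$ in front of each $\norm{\tilde A^{-,quo}_{(p,\lambda),(q,-\frac12)}f}^2_{q,-\frac12,quo}$. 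This bookkeeping is straightforward but worth spelling out.
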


\begin{proof}
Let $\lambda \in i\R$ and $f\in \pi_{p,\lambda}^\alpha$. Then by Lemma~\ref{C:lemma:L^2_condition} ${\Phi}={\Phi}_++{\Phi}_-$ is a unitary map such that by orthogonality
$$\norm{f}_{L^2(K)}^2=\norm{\Phi f}_{L^2(G')}^2=\norm{\Phi_+ f}_{L^2(G')}^2+\norm{\Phi_- f}_{L^2(G')}^2.$$
Let $p\neq \frac{n}{2}$. Then applying the inversion formula of Theorem~\ref{C:theorem:plancherel_O(1,n)} and Theorem~\ref{C:theorem:coordinate_change} and \eqref{C:eq:norm_fourier} we obtain
\begin{multline*}
\norm{\Phi_\pm f}_{L^2(G')}^2=\frac{1}{4} \int_{i\R} \left(\norm{A_{(p,\lambda),(p-1,\nu)}^\mp f}_{L^2(K')}^2 +   \norm{A_{(p,\lambda),(p,\nu)}^\pm f}_{L^2(K')}^2 \right) \, d\mu_{\sigma_p}(\nu) \\
=\frac{1}{4} \left( \int_{i\R} \norm{\tilde{A}_{(p,\lambda),(p-1,\nu)}^\mp f}_{L^2(K')}^2 \frac{d\nu}{c(p,\lambda,\nu)^\mp} +  \int_{i\R} \norm{\tilde{A}_{(p,\lambda),(p,\nu)}^\pm f}_{L^2(K')}^2 \frac{d\nu}{c(p,\lambda,\nu)^\pm} \right) 
\end{multline*}
by renormalization to holomorphic families.
For $p=\frac{n}{2}$ the argument for the continuous part of the Plancherel formula is the same. For the discrete summands we reformulate for $\lambda \neq 0$
\begin{multline*}
\langle \phi_{p,\frac{1}{2}} \ast \Phi_+ f,\Phi_+ f\rangle_{L^2(G')}=\left\langle \tilde{\Phi_+ f}\left(\cdot,p,\frac{1}{2}\right),\tilde{\Phi_+ f}\left(\cdot,p,-\frac{1}{2}\right) \right\rangle_{L^2(K')}
\\
=\langle A_{(p,\lambda),(p-1,\frac{1}{2})}^- f, A_{(p,\lambda),(p-1,-\frac{1}{2})}^- f\rangle_{L^2(K')} +\langle A_{(p,\lambda),(p,\frac{1}{2})}^+ f, A_{(p,\lambda),(p,-\frac{1}{2})}^+ f\rangle_{L^2(K')}.
\end{multline*}
Then 
\begin{multline*}
\langle A_{(p,\lambda),(p,\frac{1}{2})}^+ f, A_{(p,\lambda),(p,-\frac{1}{2})}^+ f\rangle_{L^2(K')}\\=\Gamma\left(\frac{-\lambda}{2}\right)  \Gamma\left(\frac{-\lambda+1}{2}\right)\Gamma\left(\frac{\lambda}{2}\right)\Gamma\left(\frac{\lambda+1}{2}\right)\langle \tilde{A}_{(p,\lambda),(p,\frac{1}{2})}^+ f, \tilde{A}_{(p,\lambda),(p,-\frac{1}{2})}^+ f\rangle_{L^2(K')} \\
\end{multline*}
Since the image of $ \tilde{A}_{(p,\lambda),(p,\frac{1}{2})}^+$ is $\Pi'_{p,\alpha}$, we have
$$\langle \tilde{A}_{(p,\lambda),(p,\frac{1}{2})}^+ f, \tilde{A}_{(p,\lambda),(p,-\frac{1}{2})}^+ f\rangle_{L^2(K')}=\langle \tilde{A}_{(p,\lambda),(p,\frac{1}{2})}^+ f, \tilde{A}_{(p,\lambda),(p,-\frac{1}{2})}^{+,quo} f\rangle_{L^2(K')}.$$
Then applying Proposition~\ref{C:prop:func_equations_quotients} we obtain
\begin{multline*}
\langle \tilde{A}_{(p,\lambda),(p,\frac{1}{2})}^+ f, \tilde{A}_{(p,\lambda),(p,-\frac{1}{2})}^{+,quo} f\rangle_{L^2(K')}= -\frac{\Gamma(\rho)}{\pi^{\frac{n-1}{2}}}
\langle T'^{quo}_{p,-\frac{1}{2}} \circ \tilde{A}_{(p,\lambda),(p,-\frac{1}{2})}^{+,quo} f, \tilde{A}_{(p,\lambda),(p,-\frac{1}{2})}^{+,quo} f\rangle_{L^2(K')}\\
=\frac{\Gamma(\rho)}{\pi^{\frac{n-1}{2}}} \norm{ \tilde{A}_{(p,\lambda),(p,-\frac{1}{2})}^{+,quo} f}^2_{p,-\frac{1}{2},quo}= \frac{4\Gamma(\rho)}{\lambda(-\lambda)\pi^{\frac{n-1}{2}}} \norm{ \tilde{A}_{(p,\lambda),(p-1,-\frac{1}{2})}^{-,quo} f}^2_{p-1,-\frac{1}{2},quo}
\end{multline*}
by Proposition~\ref{C:prop:norms_equal}.
Similarly
\begin{multline*}
\langle A_{(p,\lambda),(p-1,\frac{1}{2})}^- f, A_{(p,\lambda),(p-1,-\frac{1}{2})}^- f\rangle_{L^2(K')}
\\=
\frac{ \Gamma(\rho)\Gamma(\frac{-\lambda+1}{2})  \Gamma(\frac{-\lambda+2}{2}) \Gamma(\frac{\lambda+1}{2}) \Gamma(\frac{\lambda+2}{2})      }
{ \pi^{\frac{n-1}{2}}} \norm{ \tilde{A}_{(p,\lambda),(p-1,-\frac{1}{2})}^{-,quo} f}^2_{p-1,-\frac{1}{2},quo}.
\end{multline*}
For $\Phi_- f$ the argument works analogously.
\end{proof}

\section{Analytic continuation}\label{C:sec:analytic_continuation}
Since we have by Lemma~\ref{C:lemma:L^2_condition} good behavior for $\Re(\lambda)>-\frac{1}{2}$ we can extend this Plancherel formula onto the real axis. We remark that we abuse notation and write $\norm{f}_{p,\lambda}$ for $\langle f ,f \rangle_{p,\lambda}$ even if the bilinear pairing is not a norm.

\begin{corollary}\label{C:cor:plancherel_1/2}
For $\Re\lambda \in  (-\frac{1}{2},\frac{1}{2})$, $p\neq \frac{n}{2}$ 
and $f\in \pi_{p,\lambda}^\pm$ we have
$$
\norm{f}_{p,\lambda}^2=\frac{1}{4}\sum_{\alpha=+,-}\sum_{q=p-1,p}\int_{i\R} \norm{\tilde{A}_{(p,\lambda),(q,\nu)}^\alpha f}_{L^2(K')}^2 \frac{\abs{t(p,q,\lambda)}}{c(p,\lambda,\nu)^\alpha}d\nu.
$$
\end{corollary}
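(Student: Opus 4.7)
The corollary extends the Plancherel identity of Lemma~\ref{C:lemma:branching_law_unitary_ps} from the imaginary axis to the strip $\Re\lambda\in(-\tfrac12,\tfrac12)$ by analytic continuation. The crucial input is Lemma~\ref{C:lemma:L^2_condition}, which ensures that the map $\Phi$ remains bounded into $L^2(G'/\tilde K',\sigma_p)$ on the whole half-plane $\Re\lambda>-\tfrac12$. Consequently the derivation of Lemma~\ref{C:lemma:branching_law_unitary_ps}---combining the Plancherel formula of Theorem~\ref{C:theorem:plancherel_O(1,n)} with the identification of the Fourier transform as symmetry breaking operators in Theorem~\ref{C:theorem:coordinate_change}---carries over verbatim on this larger region, producing a holomorphic family of identities for $\|\Phi f\|^2_{L^2(G'/\tilde K',\sigma_p)}$ with spectral density $1/c(p,\lambda,\nu)^\alpha$ but without the factor $|t|$.

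To pass from this identity for $\|\Phi f\|^2$ to the desired identity for $\|f\|^2_{p,\lambda}=\pm\langle f,T_{p,\lambda}f\rangle_{L^2(K)}$, I plan to use the functional equation
$\tilde A^\alpha_{(p,-\lambda),(q,\nu)}\circ T_{p,\lambda}=t(p,q,\lambda)\,\tilde A^\alpha_{(p,\lambda),(q,\nu)}$
of Theorem~\ref{C:theorem:functional_equations} together with the manifest symmetry $c(p,\lambda,\nu)^\alpha=c(p,-\lambda,\nu)^\alpha$. Applying the extended $\Phi$-Plancherel identity to $T_{p,\lambda}f\in\pi_{p,-\lambda}^\pm$ (which is admissible for $\Re\lambda<\tfrac12$) produces a companion identity with $|t(p,q,\lambda)|^2$ inserted in the integrand. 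Polarizing against $\Phi f$---expanding the sesquilinear cross-pairing $\langle\Phi f,\Phi(T_{p,\lambda}f)\rangle_{L^2(G'/\tilde K',\sigma_p)}$ via Theorem~\ref{C:theorem:coordinate_change} and the functional equation---then yields a spectral integral with exactly one power $t(p,q,\lambda)$ in the density. On the geometric side, a direct computation in Iwasawa coordinates, in the spirit of the proof of Lemma~\ref{C:lemma:L^2_condition}, identifies this cross-pairing with $\pm\langle f,T_{p,\lambda}f\rangle_{L^2(K)}=\|f\|^2_{p,\lambda}$ up to the appropriate Jacobian.

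Finally, the sign analysis preceding Lemma~\ref{C:lemma:branching_law_unitary_ps} in Section~\ref{C:sec:unitary_composition} shows that $\pm t(p,q,\lambda)$ is positive on the real complementary series range (with sign matching the one in the definition of $\langle\cdot,\cdot\rangle_{p,\lambda}$), so $|t(p,q,\lambda)|=\pm t(p,q,\lambda)$ is there a meromorphic function of $\lambda$. Both sides of the resulting identity are thus meromorphic in the strip and agree on the imaginary axis, so they coincide on $\Re\lambda\in(-\tfrac12,\tfrac12)$ by analytic continuation. The principal obstacle is the polarization/Jacobian bookkeeping: a naive application of Plancherel to $T_{p,\lambda}f$ produces $|t|^2$ in the density, and reducing this to a single power $|t|$ requires the precise cross-pairing identification above, which relies on the detailed structure of the symmetry breaking operators and of the Knapp--Stein intertwiners from \cite{kobayashi_speh_2018}.
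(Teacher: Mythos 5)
Your proposal captures the essential ideas: the cross-pairing $\langle \Phi f, \Phi(T_{p,\lambda}\bar f)\rangle_{L^2(G'/\tilde K',\sigma_p)}$, the functional equation $\tilde A^\alpha_{(p,-\lambda),(q,\nu)}\circ T_{p,\lambda}=t(p,q,\lambda)\,\tilde A^\alpha_{(p,\lambda),(q,\nu)}$, the geometric identification of that cross-pairing with $\pm\langle f,T_{p,\lambda}\bar f\rangle_{L^2(K)}=\norm{f}^2_{p,\lambda}$ by the Iwasawa computation of Lemma~\ref{C:lemma:L^2_condition}, and the sign analysis making $|t(p,q,\lambda)|=\pm t(p,q,\lambda)$ consistent with the definition of $\langle\cdot,\cdot\rangle_{p,\lambda}$. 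That is exactly the skeleton of the paper's proof.

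Two framing remarks worth flagging. First, your detour through the companion identity for $\norm{T_{p,\lambda}f}^2$ with $|t|^2$ and a subsequent polarization is unnecessary: the paper simply applies the inversion formula of Theorem~\ref{C:theorem:plancherel_O(1,n)} together with Theorem~\ref{C:theorem:coordinate_change} directly to the cross-pairing $(\Phi f,\Phi\circ T_{p,\lambda}\bar f)_{G'}$, which already lies in $L^2\times L^2$ for $\Re\lambda\in(-\tfrac12,\tfrac12)$ by Lemma~\ref{C:lemma:L^2_condition}, and then inserts the $T_{p,\lambda}$-functional equation once. This produces the single power of $t(p,q,\lambda)$ in one step. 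Second, your closing appeal to analytic continuation from the imaginary axis is redundant (and mildly circular if one insists the target identity contains the non-holomorphic modulus $|t(p,q,\lambda)|$): the direct cross-pairing computation is already valid for real $\lambda\in(-\tfrac12,\tfrac12)$, which is the content of the corollary, so no continuation argument is needed once the cross-pairing identity is established. In substance, though, your proof plan is correct and follows the same route as the paper.
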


\begin{proof}
In the following we abbreviate the integral pairings $$\int_\ast f(h)g(h)\,dh$$ by $(f,g)_\ast$ for the Lie group $\ast$.
First by the same calculation as in the proof of Lemma~\ref{C:lemma:L^2_condition} we have for $\lambda \in \R$ and $f\in \pi_{p,\lambda}^\pm$ that
$$(\Phi f, \Phi \circ T_{p,\lambda} \overline{f})_{G'}=\norm{f}^2_{p,\lambda},$$ for $p< \frac{n}{2}$ and 
$$(\Phi f, \Phi \circ -T_{p,\lambda} \overline{f})_{G'}=\norm{f}^2_{p,\lambda},$$ for $p> \frac{n}{2}$,
since $T_{p,\lambda} \overline{f}  \in \pi_{p,-\lambda}^\pm$. Moreover by Lemma~\ref{C:lemma:L^2_condition} both $\Phi f\in L^2(G'/K',\sigma_p)$ and $\Phi\circ T_{p,\lambda}f \in L^2(G'/K',\sigma_p)$ for $\Re(\lambda)\in (-\frac{1}{2},\frac{1}{2})$ such that we can apply the inversion formula of Theorem~\ref{C:theorem:plancherel_O(1,n)} to $f$ and exchange orders of integrals. For $p\neq \frac{n}{2}$ we obtain

\begin{equation*}
(f,T_{p,\lambda}\overline{f})_K=\frac{1}{4}\sum_{\alpha=+,-}\sum_{q=p-1,p}\int_{i\R} (\tilde{A}_{(p,\lambda),(q,\nu)}^\alpha f,\tilde{A}_{(p,-\lambda),(q,-\nu)}^\alpha \circ T_{p,\lambda}\overline{f})_{K'} \frac{d\nu}{c(p,\lambda,\nu)^\alpha}
\end{equation*}
in the same way as in Lemma~\ref{C:lemma:branching_law_unitary_ps}.
Applying the functional equation for $T_{p,\lambda}$ of Theorem~\ref{C:theorem:functional_equations} this proves the statement since $t(p,q,\alpha)\geq 0$ for all $p<\frac{n}{2}$ and $t(p,q,\alpha)\leq 0$ for all $p>\frac{n}{2}$.
\end{proof}

Rewriting the parings of the Plancherel formula in the Corollary above as integral pairings we obtain an equality for $\lambda \in (-\frac{1}{2}, \frac{1}{2})$
\begin{equation}\label{C:eq:plancherel_induction_start}
(f,T_{p,\lambda}\overline{f})_K=\frac{1}{4}\sum_{\alpha=+,-}\sum_{q=p-1,p}\int_{i\R} (\tilde{A}_{(p,\lambda),(q,\nu)}^\alpha f,\tilde{A}_{(p,-\lambda),(q,-\nu)}^\alpha \circ T_{p,\lambda}\overline{f})_{K'} \frac{t(p,q,\lambda)}{c(p,\lambda,\nu)^\alpha}d\nu.
\end{equation}
In this sense the left hand side of this equation is holomorphic in $\lambda$ if we consider $f$ as a function in the compact picture, i.e. as a function on $K/M$. The right hand side on the other hand is meromorphic in $\lambda$ and has its meromorphic structure governed by the function $c(p,\lambda,\nu)^\pm$ since $t(p,q,\lambda)$ is a regular function for $\Re(\lambda) \leq \frac{1}{2}$. Hence we can analytically continue the right hand side towards $\lambda \in (-\infty, 0)$, where the left hand side is essentially $\norm{f}_{p,\lambda}$, to obtain Plancherel formulas on the whole complementary series and on unitarizable quotients.

\begin{prop}\label{C:prop:plancherel_general}
For $p\neq 0,\frac{n}{2},n$ with $\lambda \in [-\abs{\rho-p}, \frac{1}{2})$ and for $p=0,n$ with $\lambda\in(-\infty,\frac{1}{2})$
\begin{multline*}(f,T_{p,\lambda}\overline{f})_K=\frac{1}{4}\sum_{\alpha=+,-}\sum_{q=p-1,p}\Bigg(\int_{i\R} \norm{\tilde{A}_{(p,\lambda),(q,\nu)}^\alpha f}_{L^2(K')}^2 \frac{t(p,q,\lambda)}{c(p,\lambda,\nu)^\alpha}d\nu
\\
+\sum_{  k \in [0, \frac{-\lambda-1+(\alpha \frac{1}{2})}{2})\cap \Z       }   t(p,q,\lambda)c(p,\lambda,q,k)_{Res}^\alpha\\ \times \norm{C_{(p,\lambda),(q,\lambda+1-(\alpha\frac{1}{2})+2k)}^\alpha f}_{q,\lambda+1-(\alpha\frac{1}{2})+2k}^2\Bigg).
\end{multline*}
\end{prop}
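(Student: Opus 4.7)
The plan is to analytically continue the Plancherel identity~\eqref{C:eq:plancherel_induction_start}, valid for $\Re\lambda \in (-\tfrac{1}{2},\tfrac{1}{2})$, to the full stated range, picking up residue contributions from the poles of the $\nu$-integrand that cross the contour $i\R$ as $\lambda$ decreases past $-\tfrac{1}{2}$.

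First I would view both sides of~\eqref{C:eq:plancherel_induction_start} as meromorphic functions of $\lambda$, with $f$ realized in the compact picture $C^\infty(K/M,\sigma_p)$. Since $\lambda \mapsto T_{p,\lambda}\bar f$ is a holomorphic family of vectors, the left-hand side $(f,T_{p,\lambda}\bar f)_K$ is entire. On the right, $\tilde A^\alpha_{(p,\lambda),(q,\nu)}$ is jointly holomorphic in $(\lambda,\nu)$, so the only source of $\nu$-poles in the integrand is the explicit factor $t(p,q,\lambda)/c(p,\lambda,\nu)^\alpha$. Inspection of the definition of $c(p,\lambda,\nu)^\alpha$ places these poles at $\nu = \pm(\lambda + 1 - (\alpha\tfrac{1}{2}) + 2k)$ for $k\in\Z_{\geq 0}$, coming from the four $\Gamma$-factors in its denominator.

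Second, I would shift the contour. As $\lambda$ decreases past $-\tfrac{1}{2} + (\alpha\tfrac{1}{2}) - 2k$, the pair of poles $\nu_k^\pm := \pm(\lambda + 1 - (\alpha\tfrac{1}{2}) + 2k)$ crosses $i\R$ in opposite directions; equality with the entire left-hand side forces adding $2\pi i$ times the sum of residues. The integrand is symmetric under $\nu \to -\nu$ by combining the functional equations of Theorem~\ref{C:theorem:functional_equations} applied to $T'_{q,\nu}$ on both factors of the pairing, so each pole crossing contributes twice the residue at $\nu_0 := \lambda + 1 - (\alpha\tfrac{1}{2}) + 2k$. Evaluating this residue is the core of step three: because $(\lambda,\nu_0)\in L(p,q)^\alpha$ the operator $\tilde A^\alpha_{(p,\lambda),(q,\nu_0)}$ vanishes and must be replaced by the non-vanishing family $C^\alpha = c_C(p,q,\nu_0)\,\tilde A^\alpha$. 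Using the $G$-side functional equation $\tilde A^\alpha_{(p,-\lambda),(q,-\nu_0)} \circ T_{p,\lambda} = t(p,q,\lambda)\,\tilde A^\alpha_{(p,\lambda),(q,\nu_0)}$ together with Proposition~\ref{C:prop:func_equations_quotients} for $T'^{\,quo}_{q,\nu_0}$ rewrites the surviving bilinear pairing as the quotient/complementary-series norm $\|C^\alpha f\|^2_{q,\nu_0}$, producing exactly the prefactor $c(p,\lambda,q,k)_{Res}^\alpha \cdot t(p,q,\lambda)$ upon collecting the residues of the $\Gamma$-factors. The admissible range of $k$ is determined by demanding that $\nu_0$ has actually crossed the contour, i.e.\ $\Re\nu_0 < 0$, which is $k\in[0,\tfrac{-\lambda-1+(\alpha/2)}{2})\cap\Z$.

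Finally, for the range of $\lambda$: when $p\neq 0,\tfrac{n}{2},n$ no further contour obstruction appears before the first reducibility point $\lambda = -|\rho-p|$, at which the LHS bilinear form acquires kernel $\ker T_{p,\lambda}$ and descends to the inner product on the quotient $\Pi_{p,\pm}$, giving the formula on the stated closed–open interval; for $p=0,n$ the same mechanism iterates through every reducibility point $\lambda = -\rho-j$, producing one additional discrete residue term at each crossing and extending the formula to all $\lambda<\tfrac{1}{2}$. The main obstacle will be the residue bookkeeping in step three, namely matching the products of $\Gamma$-factors from $1/c(p,\lambda,\nu)^\alpha$ at $\nu_0$, from the renormalization $c_C$, and from the functional-equation scalar $t'(p,q,\nu_0)$, with the exact closed form $c(p,\lambda,q,k)_{Res}^\alpha$ given in the paper, and simultaneously verifying via Lemma~\ref{C:lemma:scalars_positive} that these coefficients carry the correct sign so that the discrete terms are genuine positive contributions to $\|f\|^2_{p,\lambda}$ (respectively to the quotient norm).
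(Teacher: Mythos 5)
Your proposal follows essentially the same route as the paper: start from the Plancherel identity at the unitary axis, view both sides as meromorphic in $\lambda$, shift the $\nu$-contour as $\lambda$ decreases, collect the poles of $1/c(p,\lambda,\nu)^\alpha$ as residues, and use the $G'$-side functional equations (Theorem~\ref{C:theorem:functional_equations} and Proposition~\ref{C:prop:func_equations_quotients}) to rewrite each residue in terms of the renormalized operators $C^\alpha_{(p,\lambda),(q,\nu_0)}$ and the quotient/complementary-series pairings, finishing with the positivity check of Lemma~\ref{C:lemma:scalars_positive}. One small slip: the $G$-side functional equation you cite, $\tilde A^\alpha_{(p,-\lambda),(q,-\nu_0)}\circ T_{p,\lambda} = t(p,q,\lambda)\tilde A^\alpha_{(p,\lambda),(q,\nu_0)}$, has a spurious sign flip on $\nu$ (the two sides share the same $\nu$-parameter), and that functional equation was already consumed in producing the factor $t(p,q,\lambda)$ in~\eqref{C:eq:plancherel_induction_start}; the residue evaluation itself only needs the $T'_{q,\nu}$ and $T'^{quo}_{q,\nu}$ functional equations, exactly as in the paper's equation~\eqref{C:eq:res_equality}. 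Also, the restriction to $\lambda\in[-|\rho-p|,\tfrac12)$ for $p\neq 0,\tfrac n2,n$ is not a contour obstruction but the range in which the pairing $(f,T_{p,\lambda}\bar f)_K$ remains (after descent to the quotient) a positive Hermitian form, while for $p=0,n$ the further reducibility points $\lambda=-\rho-j$ admit unitarizable quotients allowing the continuation to all of $(-\infty,\tfrac12)$.
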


\begin{proof}
	By Corollary~\ref{C:cor:plancherel_1/2} we have for $\Re \lambda \in (-\frac{1}{2},\frac{1}{2})$
	$$(f,T_{p,\lambda}\overline{f})_K=\frac{1}{4}\sum_{\alpha=+,-}\sum_{q=p-1,p}\int_{i\R} (\tilde{A}_{(p,\lambda),(q,\nu)}^\alpha f,\tilde{A}_{(p,-\lambda),(q,-\nu)}^\alpha \circ T_{p,\lambda}\overline{f})_{K'} \frac{t(p,q,\lambda)}{c(p,\lambda,\nu)^\alpha}d\nu.$$
	We prove that this has a holomorphic extension to $\Re\lambda <0$ if $p=0,n$ and to $\Re\lambda \in [-\abs{\rho-p},0)$ in the other cases. We prove for each of the integrals that this extension is  for $\Re\lambda \notin -1+(\alpha \frac{1}{2})-2\Z_{\geq 0}$ given by
	\begin{multline*}
		\int_{i\R} (\tilde{A}_{(p,\lambda),(q,\nu)}^\alpha f,\tilde{A}_{(p,\lambda),(q,-\nu)}^\alpha \overline{f})_{K'} \frac{t(p,q,\lambda)}{c(p,\lambda,\nu)^\alpha}\, d\nu
		\\ +4\pi
		\Res_{\mu=\lambda+1-(\alpha\frac{1}{2})+2k}\left(
		(\tilde{A}_{(p,\lambda),(q,\mu)}^\alpha f,\tilde{A}_{(p,\lambda),(q,-\mu)}^\alpha \overline{f})_{K'} \frac{t(p,q,\lambda)}{c(p,\lambda,\mu)^\alpha}
		\right)
	\end{multline*}
	and that the residues are of the claimed form.

We prove the statement by induction.
Conisder the statement holding for $\Re\lambda\in (-\frac{1}{2}-2k,-2k)$ and consider the integral for $\alpha=+$ of \eqref{C:eq:plancherel_induction_start}
\begin{equation}\label{C:eq:integral_to_continue_+}
\int_{i\R} (\tilde{A}_{(p,\lambda),(q,\nu)}^+f,\tilde{A}_{(p,\lambda),(q,-\nu)}^+ \overline{f})_{K'} \frac{t(p,q,\lambda)}{c(p,\lambda,\nu)^+}\, d\nu.
\end{equation}
Then for $\Re\nu\in[0,\frac{1}{2}]$, and $\Re\lambda \in (-\frac{1}{2}-2k,-2k)$,
$c(p,\lambda,\nu)^+$  vanishes if and only if $\nu=\lambda+\frac{1}{2}+2k$ such that the integral has a simple pole. Then moving the contour of integration we obtain
\begin{multline*}
\int_{i\R+\frac{1}{2}} (\tilde{A}_{(p,\lambda),(q,\nu)}^+f,\tilde{A}_{(p,\lambda),(q,-\nu)}^+ \overline{f})_{K'} \frac{t(p,q,\lambda)}{c(p,\lambda,\nu)^+}\, d\nu
\\
+2\pi\Res_{\mu=\lambda+\frac{1}{2}+2k}\left(
(\tilde{A}_{(p,\lambda),(q,\mu)}^+f,\tilde{A}_{(p,\lambda),(q,-\mu)}^+ \overline{f})_{K'} \frac{t(p,q,\lambda)}{c(p,\lambda,\mu)^+}
\right).
\end{multline*}
Now for $\Re \nu=\frac{1}{2}$, $c(p,\lambda,\nu)^+$ does not vanish for $\lambda \in (-1-2k,-2k)$. On the other hand for $\Re \nu \in [0, \frac{1}{2}]$ and $\Re \lambda \in (-1-2k,-\frac{1}{2}-2k)$, $c(p,\lambda,\nu)^+$ vanishes only at $\nu=-\lambda-\frac{1}{2}-2k$.
Then moving the contour of integration back towards $\Re \nu=0$ 
we have for $\lambda \in (-1-2k,-\frac{1}{2}-2k)$
\begin{multline*}
\int_{i\R+\frac{1}{2}} (\tilde{A}_{(p,\lambda),(q,\nu)}^+f,\tilde{A}_{(p,\lambda),(q,-\nu)}^+ \overline{f})_{K'} \frac{t(p,q,\lambda)}{c(p,\lambda,\nu)^+}\, d\nu
\\=
\int_{i\R} (\tilde{A}_{(p,\lambda),(q,\nu)}^+f,\tilde{A}_{(p,\lambda),(q,-\nu)}^+ \overline{f})_{K'} \frac{t(p,q,\lambda)}{c(p,\lambda,\nu)^+}\, d\nu\\
-2\pi\Res_{\mu=-\lambda-\frac{1}{2}-2k}\left(
(\tilde{A}_{(p,\lambda),(q,\mu)}^+f,\tilde{A}_{(p,\lambda),(q,-\mu)}^+ \overline{f})_{K'} \frac{t(p,q,\lambda)}{c(p,\lambda,\mu)^+}
\right).
\end{multline*}

Consider the residue $$\Res_{\mu=\lambda+\frac{1}{2}+2k}\left(
(\tilde{A}_{(p,\lambda),(q,\mu)}^+f,\tilde{A}_{(p,\lambda),(q,-\mu)}^+ \overline{f})_{K'} \frac{t(p,q,\lambda)}{c(p,\lambda,\mu)^+}
\right).$$
Then by inserting the Knapp--Stein intertwiner we have
\begin{multline}\label{C:eq:res_equality}
\pi \Res_{\mu=\lambda+\frac{1}{2}+2k}\left(
(\tilde{A}_{(p,\lambda),(q,\mu)}^+f,\tilde{A}_{(p,\lambda),(q,-\mu)}^+ \overline{f})_{K'} \frac{t(p,q,\lambda)}{c(p,\lambda,\mu)^+}
\right)\\=\pi\Res_{\mu=\lambda+\frac{1}{2}+2k}\left(
(\tilde{A}_{(p,\lambda),(q,\mu)}^+f,T'_{q,\mu}\circ \tilde{A}_{(p,\lambda),(q,\mu)}^+ \overline{f})_{K'} \frac{t(p,q,\lambda)}{c(p,\lambda,\mu)^+t'(p,q,\mu)}
\right)
	\\=t(p,q,\lambda)c(p,\lambda,p,k)^+_{Res}\\ \times (C_{(p,\lambda),(q,\lambda+\frac{1}{2}+2k)}^+ f,T'^+_{p,\lambda+\frac{1}{2}+2k} \circ C_{(p,\lambda),(q,\lambda+\frac{1}{2}+2k)}^+ \overline{f})_{K'},
\end{multline}
which is holomorphic in $\lambda$ for $\Re\lambda <-\frac{1}{2}-2k$ by Lemma~\ref{C:lemma:scalars_positive}.
Similarly we have
\begin{multline*}
	-\pi \Res_{\mu=-\lambda-\frac{1}{2}-2k}\left(
	(\tilde{A}_{(p,\lambda),(q,\mu)}^+f,\tilde{A}_{(p,\lambda),(q,-\mu)}^+ \overline{f})_{K'} \frac{t(p,q,\lambda)}{c(p,\lambda,\mu)^+}
	\right)\\=-\pi t(p,q,\lambda)\Res_{\mu=-\lambda-\frac{1}{2}-2k}\left( \frac{1}{c(p,\lambda,-\mu)^+t'(p,p,-\mu)c_C(p,q,-\mu)^2}  \right)   \\ \times (T'_{p,\lambda+\frac{1}{2}+2k} \circ C_{(p,\lambda),(q,\lambda+\frac{1}{2}+2k)}^+ f, C_{(p,\lambda),(q,\lambda+\frac{1}{2}+2k)}^+ \overline{f})_{K'},
\end{multline*}
which coincides with \eqref{C:eq:res_equality}.
Hence
we obtain
\begin{multline*}
\int_{i\R} (\tilde{A}_{(p,\lambda),(q,\nu)}^+f,\tilde{A}_{(p,\lambda),(q,-\nu)}^+ \overline{f})_{K'} \frac{t(p,q,\lambda)}{c(p,\lambda,\nu)^+}\, d\nu
\\ +4\pi
\Res_{\mu=\lambda+\frac{1}{2}+2k}\left(
(\tilde{A}_{(p,\lambda),(q,\mu)}^+f,\tilde{A}_{(p,\lambda),(q,-\mu)}^+ \overline{f})_{K'} \frac{t(p,q,\lambda)}{c(p,\lambda,\mu)^+}
\right)
\end{multline*}
as the analytic continuation of \eqref{C:eq:integral_to_continue_+} on $\lambda \in (-1-2k, -\frac{1}{2}-2k)$.
Since $c(p,\lambda,\nu)^+$ does not vanish for $\Re \lambda \in (-\frac{1}{2}-2(k+1),-\frac{1}{2}-2k)$ and $\Re \nu=0$, this even defines an analytic continuation on $ \Re \lambda\in  (-\frac{1}{2}-2(k+1),-\frac{1}{2}-2k)$.
Now taking the limit along the real line towards $\lambda=-\frac{1}{2}-2k$ from the right proves the statement.
For  $$\int_{i\R} (\tilde{A}_{(p,\lambda),(q,\nu)}^-f,\tilde{A}_{(p,\lambda),(q,-\nu)}^- \overline{f})_{K'} \frac{t(p,q,\lambda)}{c(p,\lambda,\nu)^-}\, d\nu$$
the argument works in the same way.
\end{proof} 

To state the main result we formulate the residues $c(p,\lambda,q,k)^\pm_{Res}$ more explicitly. The following can be deduced by simple calculations using the definitions of the functions $c(p,\lambda,\nu)^\pm$, $t'(p,q,\nu)$ and $c_C(p,q,\nu)^\pm$.

By Lemma~\ref{C:lemma:scalars_positive} the formula of Proposition~\ref{C:prop:plancherel_general} immediately yields Plancherel formulas for the unitarizable representations occurring  in $\pi_{p,\lambda}^\pm$.
\begin{corollary}\label{C:cor:plancherel}
\begin{enumerate}[label=(\roman{*})]
\item Let $p=0$. For $\lambda\in (-\rho,0)\cup -\rho-\Z_{\geq 0}$ we have for $f\in \pi_{0,\lambda}^\pm$
\begin{multline*}
\norm{f}^2_{0,\lambda}=\frac{1}{4}\sum_{\alpha=+,-}\Bigg(\int_{i\R} \norm{\tilde{A}_{(0,\lambda),(0,\nu)}^\alpha f}_{L^2(K')}^2 \frac{\abs{t(0,0,\lambda)}}{c(p,\lambda,\nu)^\alpha}d\nu
\\
+\sum_{  k \in [0, \frac{-\lambda-1+(\alpha \frac{1}{2})}{2})\cap \Z       }  \abs{t(0,0,\lambda)}c(0,\lambda,0,k)_{Res}^\alpha\norm{C_{(0,\lambda),(0,\lambda+1-(\alpha\frac{1}{2})+2k)}^\alpha f}^2_{0,\lambda+1-(\alpha\frac{1}{2})+2k }    \Bigg).
\end{multline*}

\item For $0<p<n$. For $\lambda\in [-\abs{p-\rho},0)$ we have for $f\in \pi_{p,\lambda}^\pm$
\begin{multline*}
\norm{f}^2_{p,\lambda}=\frac{1}{4}\sum_{\alpha=+,-}\sum_{q=p-1,p} \Bigg(\int_{i\R} \norm{\tilde{A}_{(p,\lambda),(q,\nu)}^\alpha f}_{L^2(K')}^2 \frac{\abs{t(p,q,\lambda)}}{c(p,\lambda,\nu)^\alpha}d\nu
\\
+\sum_{  k \in [0, \frac{-\lambda-1+(\alpha \frac{1}{2})}{2})\cap \Z       }  \abs{t(p,q,\lambda)}c(p,\lambda,q,k)_{Res}^\alpha\norm{C_{(p,\lambda),(q,\lambda+1-(\alpha\frac{1}{2})+2k)}^\alpha f}^2_{q,\lambda+1-(\alpha\frac{1}{2})+2k }    \Bigg).
\end{multline*}
\item Let $p=n$. For $\lambda\in (-\rho,0)\cup -\rho-\Z_{\geq 0}$ we have for $f\in \pi_{n,\lambda}^\pm$
\begin{multline*}
\norm{f}^2_{n,\lambda}=\frac{1}{4}\sum_{\alpha=+,-}\Bigg(\int_{i\R} \norm{\tilde{A}_{(n,\lambda),(n-1,\nu)}^\alpha f}_{L^2(K')}^2 \frac{\abs{t(n,n-1,\lambda)}}{c(p,\lambda,\nu)^\alpha}d\nu
\\
+\sum_{  k \in [0, \frac{-\lambda-1+(\alpha \frac{1}{2})}{2})\cap \Z       }  \abs{t(n,n-1,\lambda)}c(n,\lambda,n-1,k)_{Res}^\alpha\\ \times \norm{C_{(n,\lambda),(n-1,\lambda+1-(\alpha\frac{1}{2})+2k)}^\alpha f}^2_{n-1,\lambda+1-(\alpha\frac{1}{2})+2k }    \Bigg).
\end{multline*}
\end{enumerate}
\end{corollary}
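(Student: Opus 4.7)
The plan is to read off the three formulas directly from Proposition~\ref{C:prop:plancherel_general} by multiplying both sides by the appropriate sign and verifying that after this sign flip every coefficient that appears is non-negative, so that the formula becomes a genuine Plancherel identity for the unitary inner product $\norm{\cdot}^2_{p,\lambda}$.

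First I would rewrite Proposition~\ref{C:prop:plancherel_general} as
\[
(f,T_{p,\lambda}\overline{f})_K = \frac{1}{4}\sum_{\alpha,q}\Bigl( \text{integral}_\alpha^q(\lambda) + \sum_{k}\,t(p,q,\lambda)\,c(p,\lambda,q,k)^{\alpha}_{Res}\,\|C_{(p,\lambda),(q,\lambda+1-(\alpha\frac12)+2k)}^\alpha f\|_{q,\mu}^2\Bigr),
\]
and recall from Section~\ref{C:sec:unitary_composition} that $\norm{f}^2_{p,\lambda}$ equals $(f,T_{p,\lambda}\overline{f})_{L^2(K)}$ if $p<\tfrac{n}{2}$ and $-(f,T_{p,\lambda}\overline{f})_{L^2(K)}$ if $p>\tfrac{n}{2}$. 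Combined with the sign of $t(p,q,\lambda)$ (which by Theorem~\ref{C:theorem:functional_equations} is non-negative for $p<\tfrac{n}{2}$ and non-positive for $p>\tfrac{n}{2}$ in the relevant range of $\lambda$), the scalar multiplying each integrand becomes $|t(p,q,\lambda)|/c(p,\lambda,\nu)^\alpha$, which matches the corollary. The residue coefficients are then $|t(p,q,\lambda)|\,c(p,\lambda,q,k)^\alpha_{Res}$, and Lemma~\ref{C:lemma:scalars_positive} tells us exactly that these residues are positive in the $\lambda$-ranges involved (strictly positive for $p\le\tfrac{n-1}{2}$ and strictly negative for $p\ge\tfrac{n+1}{2}$, with the second sign cancelling against the sign of $t(p,q,\lambda)$). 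This yields cases (i), (ii), (iii) for $\lambda$ in the interior of the complementary range.

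Next I would treat the endpoints and, in cases (i) and (iii), the further discrete values $\lambda=-\rho-j$ with $j\in\Z_{\geq 0}$. For these, $\pi_{p,\lambda}^\pm$ is reducible with a unitarizable quotient $I_{p,j,\pm}$ (or $\Pi_{p,\pm}$ when $j=0$ for $p\neq 0,n$), and the inner product $\norm{\cdot}^2_{p,\lambda}$ descends to this quotient via the projection $\pr_\lambda$, as recalled at the end of Section~\ref{C:sec:unitary_composition}. I would fix $f\in\pi_{p,\lambda}^\pm$, take the limit of both sides of the formula in Proposition~\ref{C:prop:plancherel_general} as $\lambda'\to\lambda$ from the right along the real axis, and use continuity in $\lambda$ of all the meromorphic objects (the holomorphic renormalizations $\tilde{A}^\pm$ and $C^\pm$, together with the residues $c(p,\lambda,q,k)^\alpha_{Res}$ which by Lemma~\ref{C:lemma:scalars_positive} are holomorphic in the ranges listed). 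Any term in the sum corresponding to an index $k$ for which a residue becomes singular would have to be treated by passing through the renormalized operator $\tilde{\tilde{\tilde{A}}}^{\pm,quo}$ of Proposition~\ref{C:prop:func_equations_quotients} and the norm identities of Proposition~\ref{C:prop:norms_equal}; but a direct inspection of $c(p,\lambda,q,k)^\alpha_{Res}$ shows the only potential singularity occurs exactly at the threshold $\lambda=-\tfrac{1}{2}-2k+\tfrac{1-\alpha}{4}$, which corresponds to a genuine jump from continuous to discrete spectrum.

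The main obstacle in this argument is bookkeeping: verifying at the boundary points $\lambda=-|\rho-p|$ (and more subtly at $\lambda=-\rho-j$ for $p=0,n$) that the left-hand side, defined via descent to the unitarizable quotient, matches the limit of the right-hand side term-by-term and that no ghost residue appears at the moment the contour deformation of Proposition~\ref{C:prop:plancherel_general} picks up a new pole. Once one checks that the sequence of residues $k=0,1,\dots,\lfloor(-\lambda-1+\alpha\tfrac12)/2\rfloor$ exhausted by the sum on the right coincides with the discrete spectrum allowed by the classification of unitarizable composition factors in Section~\ref{C:sec:unitary_reps}, the three stated formulas drop out as the assembly of these pieces with the correct sign conventions.
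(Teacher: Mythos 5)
Your proposal takes essentially the same approach as the paper, whose proof is simply that the corollary follows immediately from Proposition~\ref{C:prop:plancherel_general} together with Lemma~\ref{C:lemma:scalars_positive}; you are spelling out the ``immediately'': the passage from $(f,T_{p,\lambda}\overline{f})_K$ to $\norm{f}^2_{p,\lambda}$ via the sign convention of Section~\ref{C:sec:unitary_reps}, the sign of $t(p,q,\lambda)$ from Theorem~\ref{C:theorem:functional_equations} turning $t$ into $\abs{t}$, Lemma~\ref{C:lemma:scalars_positive} for regularity and sign of the residue coefficients, and a limit/continuity argument at the boundary points $\lambda=-\abs{\rho-p}$ and $\lambda=-\rho-j$. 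One small imprecision worth flagging: you attribute the positivity of the discrete contributions for $p\geq\tfrac{n+1}{2}$ to ``the second sign cancelling against the sign of $t(p,q,\lambda)$,'' but in the corollary that sign has already been absorbed into $\abs{t(p,q,\lambda)}$; the sign that actually compensates the negativity of $c(p,\lambda,q,k)^\alpha_{Res}$ in that range is the one built into the definition of $\langle\cdot,\cdot\rangle_{q,\nu}$ for $q>\rho'$. This does not affect the structure of your argument, which otherwise matches the paper's.
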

We remark that for $p\neq \frac{n}{2}$, $t(p,p-1,p-\rho)=t(p,p,\rho-p)=0$ while $t(p,q,\lambda)\neq 0$ in all other cases.
By the corollary above we immeditelly obtain unitary branching laws for the complementary series.
\begin{theorem}\label{C:theorem:branching_complementary series}
\begin{enumerate}[label=(\roman{*})]
\item For $p=0$ and $\lambda\in (-\rho,0)$ we have
$$\hat{\pi}_{0,\lambda}^\pm|_{G'}\simeq \bigoplus_{\alpha=+,-} \left(\int^\oplus_{i\R_+} \hat{\tau}_{0,\nu}^\alpha\, d\nu \oplus \bigoplus_{k \in [0, \frac{-\lambda-1+(\alpha \frac{1}{2})}{2})\cap \Z} \hat{\tau}_{0,\lambda+1-(\alpha \frac{1}{2})+2k}^{\pm\alpha}\right).$$
\item for $0<p<n$ and $\lambda\in (-\abs{\rho-p},0)$ we have
$$\hat{\pi}_{p,\lambda}^\pm|_{G'}\simeq\bigoplus_{\alpha=+,-}\bigoplus_{q=p-1,p} \left(\int^\oplus_{i\R_+} \hat{\tau}_{q,\nu}^\alpha\, d\nu \oplus \bigoplus_{k \in [0, \frac{-\lambda-1+(\alpha \frac{1}{2})}{2})\cap \Z} \hat{\tau}_{q,\lambda+1-(\alpha \frac{1}{2})+2k}^{\pm\alpha}\right).$$
\item For $p=n$ and $\lambda\in (-\rho,0)$ we have
$$\hat{\pi}_{n,\lambda}^\pm|_{G'}\simeq\bigoplus_{\alpha=+,-} \left(\int^\oplus_{i\R_+} \hat{\tau}_{n-1,\nu}^\alpha\, d\nu \oplus \bigoplus_{k \in [0, \frac{-\lambda-1+(\alpha \frac{1}{2})}{2})\cap \Z} \hat{\tau}_{n-1,\lambda+1-(\alpha \frac{1}{2})+2k}^{\pm\alpha}\right).$$
\end{enumerate}
\end{theorem}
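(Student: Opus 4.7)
The plan is to read off the unitary direct-integral decomposition directly from the explicit Plancherel identity of Corollary~\ref{C:cor:plancherel}, which is the real analytic input. The three cases are parallel; I focus on case (ii) with $0<p<n$ and $\lambda\in(-\abs{\rho-p},0)$, the cases $p=0,n$ differing only by the absence of one of the two $q$-families.

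First I would interpret the continuous integral
\[
\int_{i\R}\norm{\tilde A^\alpha_{(p,\lambda),(q,\nu)}f}_{L^2(K')}^2\,\frac{\abs{t(p,q,\lambda)}}{c(p,\lambda,\nu)^\alpha}\,d\nu
\]
appearing in Corollary~\ref{C:cor:plancherel}(ii) as the squared norm of the image of $f$ in $\int^{\oplus}_{i\R}\hat\tau^\alpha_{q,\nu}\,d\nu$. Using the functional equation $T'_{q,\nu}\circ\tilde A^\pm_{(p,\lambda),(q,\nu)}=t'(p,q,\nu)\,\tilde A^\pm_{(p,\lambda),(q,-\nu)}$ of Theorem~\ref{C:theorem:functional_equations}, together with the unitarity of $T'_{q,\nu}$ on the unitary principal series for $\nu\in i\R$, I would fold the integral to $i\R_+$, producing the four continuous summands $\int^{\oplus}_{i\R_+}\hat\tau^\alpha_{q,\nu}\,d\nu$ for $\alpha\in\{+,-\}$, $q\in\{p-1,p\}$ stated in the theorem.

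Next I would interpret the finite sum of residue terms. By Lemma~\ref{C:lemma:scalars_positive} the coefficient $\abs{t(p,q,\lambda)}\,c(p,\lambda,q,k)^\alpha_{Res}$ is strictly positive in the relevant range of $\lambda$, so each residue contributes a positive multiple of the squared complementary-series norm of $C^\alpha_{(p,\lambda),(q,\nu_k)}f$, with $\nu_k:=\lambda+1-(\alpha\tfrac12)+2k$. A direct check shows that the truncation $k\in[0,\tfrac{-\lambda-1+(\alpha\frac12)}{2})\cap\Z$ forces $\nu_k\in(-\abs{q-\rho'},0)$, so $\tau^{\pm\alpha}_{q,\nu_k}$ is an irreducible complementary-series representation of $G'$; the sign $\pm\alpha$ is dictated by the conventions $A^+:\pi^\pm\to\tau^\pm$ and $A^-:\pi^\pm\to\tau^\mp$ of Section~\ref{C:sec:classification_sbos}.

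Assembling these pieces, the map
\[
f\mapsto\bigl((\tilde A^\alpha_{(p,\lambda),(q,\nu)}f)_{\alpha,q,\nu\in i\R_+},\,(C^\alpha_{(p,\lambda),(q,\nu_k)}f)_{\alpha,q,k}\bigr),
\]
after rescaling each coordinate to absorb its measure factor, is a $G'$-equivariant isometry from the smooth vectors of $\hat\pi^\pm_{p,\lambda}$ into the right-hand Hilbert space, and extends by continuity to the full unitary completion. The part I expect to be the main obstacle is surjectivity, which I would obtain from the Kobayashi--Speh classification of symmetry breaking operators combined with multiplicity one (Theorem~\ref{C:theorem:mult_one}): since every $G'$-intertwiner from $\hat\pi^\pm_{p,\lambda}$ into $\hat\tau^\alpha_{q,\nu}$ at a fixed parameter is (up to scalar) one of the $\tilde A$- or $C$-operators used above, any orthogonal complement to the image would contradict the classification. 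The remaining technical difficulty is the careful bookkeeping of the signs $\pm\alpha$ and the truncation range for $k$, both controlled by the vanishing locus $L(p,q)^\alpha$; Lemma~\ref{C:lemma:scalars_positive} guarantees that no residue coefficient degenerates inside $(-\abs{\rho-p},0)$, so the analytic continuation genuinely produces a positive discrete weight at every $\nu_k$.
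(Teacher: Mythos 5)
Your proposal is essentially the paper's own argument: Theorem~\ref{C:theorem:branching_complementary series} is read off directly from the explicit Plancherel identity of Corollary~\ref{C:cor:plancherel}, with the continuous part folded from $i\R$ to $i\R_+$ via the unitary Knapp--Stein operator and the residue terms identified as complementary-series summands for $G'$. Your verifications — that the truncation $k\in[0,\frac{-\lambda-1+(\alpha\frac12)}{2})\cap\Z$ forces $\nu_k\in(-\abs{\rho'-q},0)$, that the sign conventions $A^+:\pi^\pm\to\tau^\pm$, $A^-:\pi^\pm\to\tau^\mp$ produce the label $\pm\alpha$, and that Lemma~\ref{C:lemma:scalars_positive} together with the non-vanishing of $t(p,q,\lambda)$ on the open interval $(-\abs{\rho-p},0)$ guarantees strictly positive discrete weights — are exactly the bookkeeping the paper suppresses when it calls the theorem immediate.

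The one place where your write-up is looser than it should be is the surjectivity step. Multiplicity one and the Kobayashi--Speh classification bound the possible constituents and show $m(\tau)\leq 1$, but they do not by themselves rule out a nonzero $G'$-invariant orthogonal complement of the image. What actually closes the gap is: (a) for the discrete pieces, $C^{\alpha}_{(p,\lambda),(q,\nu_k)}\neq 0$ and $\hat\tau^{\pm\alpha}_{q,\nu_k}$ is irreducible, so the image of $C^{\alpha}$ is dense and its closure is the full summand; and (b) for the continuous part, $\tilde A^{\alpha}_{(p,\lambda),(q,\nu)}\neq 0$ for all $\nu\in i\R$ outside a finite set, and each $\hat\tau^{\alpha}_{q,\nu}$ is irreducible, so fiber-wise the image is dense, and since closed $G'$-invariant subspaces of the multiplicity-free direct integral $\int^\oplus_{i\R_+}\hat\tau^{\alpha}_{q,\nu}\,d\nu$ correspond to measurable subsets of the spectrum, the image exhausts the whole integral; a set of positive measure in the complement would force $\tilde A^{\alpha}_{(p,\lambda),(q,\nu)}$ to vanish on that set, contradicting analyticity in $\nu$. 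With that adjustment the argument is complete and matches the paper's intended route.
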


Similarly we can deduce unitary branching laws for the unitary quotients with non-trivial $(\mathfrak{g},K)$-cohomology.
\begin{theorem}\label{C:theorem:branching_infinitesimal_char_rho}
\begin{enumerate}[label=(\roman{*})]
\item For the one dimensional unitary quotients we have
$$\widehat{\Pi}_{0,\pm}|_{G'} \simeq \widehat{\Pi}'_{0,\pm}, \qquad \widehat{\Pi}_{n+1,\pm}|_{G'}\simeq\widehat{\Pi}'_{n,\pm}.$$
\item For $0<p\leq \frac{n}{2}$ we have
$$\widehat{\Pi}_{p,\pm}|_{G'} \simeq \widehat{\Pi}'_{p,\pm}\oplus \bigoplus_{k\in (0,\rho'-p+1)\cap \Z} \hat{\tau}_{p-1,p-1-\rho'+k}^{\mp(-1)^k}\oplus \bigoplus_{\alpha=+,-} \int_{i\R_+}^\oplus \hat{\tau}_{p-1,\nu}^\alpha\, d\nu .
$$
\item For $p= \frac{n+1}{2}$ we have
$$\widehat{\Pi}_{\frac{n+1}{2},\pm}|_{G'} \simeq \bigoplus_{\alpha=+,-} \int_{i\R_+}^\oplus \hat{\tau}_{\frac{n-1}{2},\nu}^\alpha\, d\nu .
$$
\item For $\frac{n+1}{2}< p\leq n $ we have
$$\widehat{\Pi}_{p,\pm}|_{G'} \simeq \widehat{\Pi}'_{p-1,\pm}\oplus \bigoplus_{k\in (0,p-1-\rho')\cap \Z} \hat{\tau}_{p-1,\rho'-p+1+k}^{\pm(-1)^k}\oplus \bigoplus_{\alpha=+,-} \int_{i\R_+}^\oplus \hat{\tau}_{p-1,\nu}^\alpha\, d\nu .
$$
\end{enumerate}
\end{theorem}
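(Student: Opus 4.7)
My plan is to obtain each branching law as the $\lambda \to -\abs{\rho - p'}$ limit of the Plancherel formula of Corollary~\ref{C:cor:plancherel}, applied to a principal series whose quotient at this endpoint realizes $\Pi_{p,\pm}$. Concretely, the exact sequence
$$0 \to \Pi_{p', \pm} \to \pi^{\pm}_{p', p'-\rho} \to \Pi_{p'+1, \mp} \to 0$$
of Theorem~\ref{C:theorem:triv_inf_character:seq} identifies $\ker T_{p', p'-\rho}$ with the submodule $\Pi_{p', \pm}$, so at the endpoint the complementary series inner product $\langle \cdot, \cdot \rangle_{p', \lambda}$ degenerates to the quotient norm $\langle \cdot, \cdot \rangle_{p', p'-\rho, quo}$ on $\Pi_{p'+1, \mp}$. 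Setting $p' = p - 1$ and swapping the outer sign recovers $\Pi_{p, \pm}$ as the quotient for $0 < p \leq (n+1)/2$; keeping $p' = p$ at $\lambda = \rho - p < 0$ accomplishes the same for $(n+1)/2 < p \leq n$. The one-dimensional cases (i) follow by directly identifying the characters $\Pi_{0, \pm} = \chi_{\pm, +}$ and $\Pi_{n+1, \pm} = \chi_{\pm, -}$ with their restrictions $\Pi'_{0, \pm}$ and $\Pi'_{n, \pm}$.

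The main mechanism I will exploit is the vanishing of exactly one of $t(p', p'-1, \lambda)$ or $t(p', p', \lambda)$ at the endpoint, computed from Theorem~\ref{C:theorem:functional_equations}. In each of cases (ii)--(iv) a direct calculation shows $t(p', q_0, \lambda) = 0$ at $\lambda = -\abs{\rho - p'}$ for exactly one $q_0 \in \{p'-1, p'\}$, annihilating the corresponding half of the Plancherel spectrum and leaving only $q = p-1$ in the continuous $\int_{i\R_+}^\oplus \hat{\tau}^\alpha_{p-1, \nu}\, d\nu$ contributions stated in (ii) and (iv). The finite sum of residues over $k$ then produces the discrete terms at $\nu = \lambda + 1 - \alpha/2 + 2k$, with the $(k, \alpha) = (0, +)$ term landing at the exceptional point where $\tau_{p-1, \nu}$ contains the cohomological representation $\Pi'_{p, \pm}$ (case (ii)) or $\Pi'_{p-1, \pm}$ (case (iv)) as a unitarizable quotient, by the $G'$-analog of Theorem~\ref{C:theorem:triv_inf_character:seq}. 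Proposition~\ref{C:prop:func_equations_quotients} then identifies the residue operator $C^+_{(p', \lambda), (p-1, \nu)}$ at this point with a multiple of the renormalized operator $\tilde{\tilde{\tilde{A}}}^{+, quo}$ projecting into $\tau_{p-1, \nu}/\ker T'_{p-1, \nu}$, so that the residue contribution $\norm{\cdot}^2_{p-1, \nu}$ becomes the quotient norm on $\widehat{\Pi}'_{\cdot, \pm}$. The remaining residues with $k \geq 1$ repackage, after a parity rewrite of the sum $\sum_\alpha \sum_{k_\alpha}$ by the substitution $k = 2k_+$ or $k = 2k_- + 1$, into the finite discrete spectrum $\hat{\tau}^{\pm(-1)^k}_{p-1, \ast}$ respectively $\hat{\tau}^{\mp(-1)^k}_{p-1, \ast}$ stated in the theorem. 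Case (iii) with $p = (n+1)/2$ is the degenerate endpoint $\lambda = -\frac{1}{2}$: the residue index range $k \in [0, (-\lambda - 1 + \alpha/2)/2) \cap \Z$ is empty for both $\alpha = \pm$, so no discrete residues appear and only the tempered continuous spectrum survives.

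The main obstacle will be to verify that the $\lambda \to -\abs{\rho - p'}$ limit preserves the Hilbert direct-integral structure of the Plancherel formula and correctly identifies $\widehat{\Pi}_{p, \pm}$ as the unitary closure of the quotient. The three ingredients I will need are: (a) the strict positivity or sign control of the residue scalars $c(p', \lambda, q, k)^\alpha_{Res}$ at the endpoint, supplied by Lemma~\ref{C:lemma:scalars_positive}, which ensures each discrete summand contributes a positive-definite inner product; (b) the compatibility of the meromorphic structure of $C^\alpha$ with the projection onto the cohomological quotient, from Proposition~\ref{C:prop:func_equations_quotients}, which pins down the normalization and rules out cancellations against the vanishing of $t(p', p', \lambda)$ or $t(p', p'-1, \lambda)$; and (c) the multiplicity-one result of Theorem~\ref{C:theorem:mult_one}, which guarantees that the unitary decomposition obtained is the unique one with the derived formal structure, so that the announced isomorphism of $G'$-representations is forced.
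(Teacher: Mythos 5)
Your proposal follows essentially the same strategy as the paper's proof: specialize the Plancherel formula of Corollary~\ref{C:cor:plancherel} at the complementary-series endpoint, use the degeneration $\norm{f}_{p',\lambda}=\norm{\pr_\lambda f}_{p',\lambda,quo}$ to pass to the cohomological quotient, exploit the vanishing of $t(p',q,\lambda)$ to retain only $q=p-1$, and identify the residues with the discrete summands. The only noticeable difference is that the paper handles the index $p=\frac{n}{2}$ separately through the direct-sum decomposition of $\pi_{\frac{n}{2},0}^\pm$ and the special functional equations of Theorem~\ref{C:theorem:functional_equations}, whereas your scheme realizes these quotients via the endpoint $\lambda=-1$ of the relevant complementary series, which Corollary~\ref{C:cor:plancherel} equally covers.
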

\begin{proof}
The statement for the one-dimensional representations is clear.
For $0\leq p\leq \frac{n-1}{2}$  the statement follows from Corollary~\ref{C:cor:plancherel} in the following way.
Since $\norm{f}_{p,p-\rho}=\norm{\pr_{p-\rho} f}_{p,p-\rho,quo}$ for all $f\in \hat{\pi}_{p,p-\rho}^\pm$ the Plancherel formula of Corollay~\ref{C:cor:plancherel} is essentially the Plancherel formula for the quotient $\widehat{\Pi}_{p+1,\mp}$. Then the statement follows since $t(p,p-1, p-\rho)=0$ and $t(p,p,p-\rho)\neq 0$.
Similarly for $\frac{n+1}{2}\leq p \leq n$, and the quotient $\widehat{\Pi}_{p,\pm}$ of $\hat{\pi}_{p,\rho-p}^\pm$ since $t(p,p,\rho-p)=0$ and $t(p,p-1,\rho-p)\neq0$.
For $p=\frac{n}{2}$ we have that $$\Pi_{\frac{n}{2},\pm}:=\ker\left(T_{\frac{n}{2},0}- \frac{\pi^\frac{n}{2}}{(\frac{n}{2})!} \operatorname{id}\right)\subseteq \pi_{\frac{n}{2},0}^\pm$$
and $$\Pi_{\frac{n}{2}+1,\mp}:=\ker\left(T_{\frac{n}{2},0}+ \frac{\pi^\frac{n}{2}}{(\frac{n}{2})!} \operatorname{id}\right)\subseteq \pi_{\frac{n}{2},0}^\pm.$$
Then the statement follows from the functional equations Theorem~\ref{C:theorem:functional_equations}.
\end{proof}
In the same way as above we obtain branching laws for the unitarizable representations $I_{p,j,\pm}$ for $p=0,n$.
\begin{theorem}\label{C:theorem:branchin_p=0,n}
	\begin{enumerate}[label=(\roman{*})]
	\item For $p=0$ we have
	$$\hat{I}_{0,j,\pm}|_{G'} \simeq \widehat{\Pi}'_{1,\mp} \oplus \bigoplus_{k=1}^j\hat{I}'_{0,k, \pm (-1)^{k+j} }   \oplus \bigoplus_{k \in (0,\rho') \cap \Z} \hat{\tau}_{0,-\rho'+k}^{\pm(-1)^{k+j}} \bigoplus_{\alpha=+,-} \oplus \int^\oplus_{i\R_+} \hat{\tau}_{0,\nu}^\alpha\, d\nu .$$
	\item For $p=n$ we have
	$$\hat{I}_{n,j,\pm}|_{G'} \simeq \widehat{\Pi}'_{n-1,\pm} \oplus\bigoplus_{k=1}^j \hat{I}'_{n-1,k, \pm (-1)^{k+j} }   \oplus \bigoplus_{k \in (0,\rho') \cap \Z} \hat{\tau}_{n-1,-\rho'+k}^{\pm(-1)^{k+j}} \oplus \bigoplus_{\alpha=+,-} \int^\oplus_{i\R_+} \hat{\tau}_{n-1,\nu}^\alpha\, d\nu .$$
\end{enumerate}
\end{theorem}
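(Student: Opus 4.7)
The plan is to apply Corollary~\ref{C:cor:plancherel}(i) at $\lambda = -\rho-j$ and identify each discrete contribution with the correct unitarizable composition factor of the principal series $\tau_{0,\nu}^\pm$; the case $p=n$ follows identically from Corollary~\ref{C:cor:plancherel}(iii), or by tensoring with the character $\chi_{+,-}$ and invoking $\chi_{+,-}\otimes \pi_{0,\lambda}^\pm \cong \pi_{n,\lambda}^\pm$ from Section~\ref{C:sec:component_group}. Since $F_{0,j,\pm}$ is the finite-dimensional proper submodule of $\pi_{0,-\rho-j}^\pm$ and $I_{0,j,\pm}$ the unique proper quotient, the $G$-invariant form $\langle\cdot,\cdot\rangle_{0,-\rho-j}$ has $F_{0,j,\pm}$ in its radical and descends to the unitary inner product $\langle\cdot,\cdot\rangle_{0,-\rho-j,quo}$ on $I_{0,j,\pm}$, with $\norm{\pr_{-\rho-j}f}^2_{0,-\rho-j,quo} = \norm{f}^2_{0,-\rho-j}$. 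Thus Corollary~\ref{C:cor:plancherel}(i) is already a unitary decomposition of $\hat I_{0,j,\pm}|_{G'}$, and the task reduces to identifying each summand.

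Substituting $\lambda = -\rho-j$, the discrete parameters $\nu = \lambda + 1 - (\alpha/2) + 2k$ sweep through $\nu_\ell := -\rho' - j + \ell$ for $\ell = 0,1,\ldots$ with $\nu_\ell < 0$, where $\ell = 2k$ comes from $\alpha = +$ and $\ell = 2k+1$ from $\alpha = -$; the sign on the target $\tau_{0,\nu_\ell}$ is then $\pm(-1)^\ell$. These fall into three regimes according to the composition series of $\tau_{0,\nu}^\pm$: for $0\leq\ell\leq j-1$ one has $\nu_\ell \in -\rho'-\Z_{>0}$ with unitarizable quotient $I'_{0,j-\ell,\pm(-1)^\ell}$; the single point $\nu_j = -\rho'$, where the cohomological representation $\Pi'_{1,\cdot}$ occurs as quotient; and $\nu_\ell \in (-\rho',0)$ for $j<\ell<\rho'+j$, which lies in the complementary-series range. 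Reindexing $k=j-\ell$ and $k=\ell-j$ respectively produces the $\hat I'_{0,k,\pm(-1)^{j+k}}$ for $k=1,\ldots,j$ and the $\hat\tau_{0,-\rho'+k}^{\pm(-1)^{j+k}}$ for $k\in(0,\rho')\cap\Z$ asserted by the theorem. The continuous part of Corollary~\ref{C:cor:plancherel}(i) yields $\bigoplus_{\alpha=\pm}\int_{i\R_+}^\oplus \hat\tau_{0,\nu}^\alpha\,d\nu$ directly.

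To turn each residue into a genuine Hilbert-space summand I compose $C_{(0,-\rho-j),(0,\nu_\ell)}^\alpha$ with the projection onto the relevant unitarizable quotient of $\tau_{0,\nu_\ell}^{\cdot}$ via Proposition~\ref{C:prop:func_equations_quotients}, and use a Knapp--Stein functional equation in the spirit of Proposition~\ref{C:prop:norms_equal} to re-express $\langle\cdot,\cdot\rangle_{0,\nu_\ell}$ as $\langle\cdot,\cdot\rangle_{0,\nu_\ell,quo}$; Lemma~\ref{C:lemma:scalars_positive} then supplies positivity of the resulting coefficient so that the term enters as an orthogonal direct summand. The main obstacle is the bookkeeping at the transitional point $\ell = j$, where $\nu_j = -\rho'$ sits at the boundary of the complementary series and $\widehat{\Pi}'_{1,\cdot}$ appears: one must match the composition-factor sign dictated by the $\alpha$-parity to the sign $\mp$ appearing in the theorem, and confirm that the corresponding residue coefficient $c(0,-\rho-j,0,\cdot)^{\cdot}_{Res}$ does not vanish, so that $\widehat{\Pi}'_{1,\mp}$ is indeed present and not suppressed by a Knapp--Stein zero.
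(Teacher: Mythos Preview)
Your approach is essentially the same as the paper's: the paper gives no separate proof for this theorem but simply states that it follows ``in the same way as above'', i.e.\ by specializing Corollary~\ref{C:cor:plancherel}(i) and (iii) at $\lambda=-\rho-j$ and using that $\norm{f}_{p,\lambda}=\norm{\pr_\lambda f}_{p,\lambda,quo}$ descends the Plancherel formula to the quotient $I_{p,j,\pm}$, exactly as in the proof of Theorem~\ref{C:theorem:branching_infinitesimal_char_rho}. Your parameter bookkeeping and the identification of each discrete $\nu_\ell$ with the appropriate unitarizable composition factor of $\tau_{0,\nu_\ell}^{\pm(-1)^\ell}$ is in fact more explicit than what the paper records.
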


\vspace{1cm}
\textsc{CW: Mathematical Sciences, Chalmers University of Technology, SE-412 96 Göteborg, Sweden}\par
\textit{E-Mail address:} \texttt{weiske@chalmers.se}
\end{document}